\documentclass[11pt]{amsart}
%\bibpunct{[}{]}{,}{n}{,}{;} % to avoid the error ¨Package natbib Error: Bibliography not compatible with author-year citations.¨
\usepackage{amsxtra,amssymb,amsmath,amsthm,amsbsy,amscd,amsfonts}
\usepackage{pb-diagram}
\usepackage{appendix}
\usepackage{enumerate}
\usepackage{latexsym}
\usepackage[Glenn]{fncychap}
\usepackage[all]{xy}
\usepackage[colorinlistoftodos]{todonotes} %%%% WHAT FOR?
% \usepackage[notref]{showkeys}  %para que salga el nombre de la etiqueta
% con el notref no aparece la referencia

\usepackage{tikz}
\usetikzlibrary{matrix}
% \usepackage{tikz-cd}
% \usepackage{blindtext}

%\usepackage[backref=false,
%backend=bibtex,% or biber
%style=numeric,
%% citestyle=authoryear,
%sortlocale=de_DE,
%natbib=false,
%doi=false,
%isbn=false,
%url=false,
%firstinits=true,
%maxbibnames=6]{biblatex}
%\renewbibmacro{in:}{%
%  \ifentrytype{article}{}{\printtext{\bibstring{in}\intitlepunct}}}
%\addbibresource{higherdirac.bib}
%\input{hd33imrn.bbl}

% style=alphabetic,citestyle=authoryear,

\addtolength{\topmargin}{-0.6cm}
\addtolength{\textheight}{1.2cm}

\addtolength{\evensidemargin}{-0.6cm}
\addtolength{\oddsidemargin}{-0.6cm}
\addtolength{\textwidth}{1.4cm}

\theoremstyle{plain}
%%%%%%%%%%%%%%%%%%%%%%%%%%% Equation counting %%%%%%%%%%%%%%%%%%%%%%%%%%%%%

% \renewcommand{\theequation}{\thesection.\arabic{equation}}
\numberwithin{equation}{section}

%%%%%%%%%%%%%%%%%%%%%%%%%%%%%%%%%%%%%%%%%%%%%%%%%%%%%%%%%%%%%%%%%%%%%%%%%%%

%%%%%%%%%%%%%%%%%%%%%%%%%%%%%%%%%%%%%%%%%%%%%%%%%%%%%%%%%%%%%%%%%%%%%%%%%%%
\newtheorem{theorem}{Theorem}[section]
\newtheorem{lemma}[theorem]{Lemma}
\newtheorem{definition-lemma}[theorem]{Definition-Lemma}
\newtheorem{proposition}[theorem]{Proposition}
\newtheorem{corollary}[theorem]{Corollary}
\newtheorem{definition}[theorem]{Definition}
\newtheorem{remark}[theorem]{Remark}

\theoremstyle{definition}
\newtheorem{example}[theorem]{Example}

% \theoremstyle{proof}

% \newcommand \lb[1]{\label{#1}}

%%%%%%%%%%%%% MACROS

\newcommand{\st}         {\;|\;}

\DeclareMathOperator{\image}{Im}
\DeclareMathOperator{\Ima}{Im}

\newcommand{\pr}         {\mathrm{pr}}
\newcommand{\Ann}        {\mathrm{Ann}}

% groupoids

\newcommand{\w}         {\omega}

\newcommand{\TM}            {T^*M}

\newcommand{\gpd}       {\mathcal{G}}

\newcommand{\leaf}         {\mathcal{O}}

\newcommand{\sour}        {\mathsf{s}}
\newcommand{\tar}         {{\mathsf{t}}}

\newcommand{\TP}          {T^*P}

\newcommand{\pp}         {\mathrm{pr}_2}
% Courant bracket

\newcommand{\ca}{[\![}
\newcommand{\cc}{]\!]}
\newcommand{\Cour}[1]      {[\![#1]\!]}

% Lie derivative
\newcommand{\Lie}        {\mathcal L}

% numbers
\newcommand{\RR}      {{\mathbb R}}

 %%% RR: \del gave an error

\newcommand{\tve}{\tilde{\varepsilon}}
\newcommand{\ve}{\varepsilon}

% comment

% \newcommand{\Osk}[1]{\Omega_{\mathrm{sk}}^{#1}(F_\mathcal{O})}
\newcommand{\Osk}[1]{\Omega_{\mathrm{sk}}^{#1}(\mathcal{O},F_\mathcal{O})}
\newcommand{\Oskb}{\Omega_{\mathrm{sk}}^{\bullet}(\mathcal{O},F_\mathcal{O})}

%%%%%%%%%%%%%%%%%%%%%%%%%%%%%%%%%%%%%%%%%%%%%%%%%
% \usepackage{Sweave}
\begin{document}

\title[]{On higher Dirac structures}

\author[]{Henrique Bursztyn, Nicolas Martinez Alba and Roberto Rubio}

\address{IMPA,
	Estrada Dona Castorina 110, Rio de Janeiro, 22460-320, Brasil }
\email{henrique@impa.br}

\address{IMPA,
	Estrada Dona Castorina 110, Rio de Janeiro, 22460-320, Brasil}
\curraddr{Universidad Nacional de Colombia - Sede Bogot\'a - Facultad de Ciencias - Departamento de Matem\'aticas - Carrera 30 No. 45-03, Bogot\'a D.C. - Colombia}
\email{nmartineza@unal.edu.co}

\address{IMPA,
	Estrada Dona Castorina 110, Rio de Janeiro, 22460-320, Brasil}
\curraddr{Weizmann Institute of Science, 234 Herzl St, Rehovot 7610001, Israel}
\email{roberto.rubio@weizmann.ac.il}

\begin{abstract}
We study higher-order analogues of Dirac structures, extending the multisymplectic structures that arise in field theory. We define higher Dirac structures as involutive subbundles of $TM+\wedge^k TM^*$ satisfying a weak version of the usual lagrangian condition (which agrees with it only when $k=1$).
Higher Dirac structures transversal to $TM$ recover the higher Poisson structures introduced in
\cite{BCI} as the infinitesimal counterparts of multisymplectic groupoids.
We describe the leafwise geometry underlying an involutive isotropic subbundle in terms of a  distinguished 1-cocycle in a natural differential complex, generalizing the presymplectic foliation of a Dirac structure.
We also identify the global objects integrating higher Dirac structures.
\end{abstract}

\maketitle

\section{Introduction}\label{sec:intro}

Dirac geometry \cite{Co} is an outgrowth of Poisson geometry, originally designed to describe the geometry of mechanical systems with constraints. Dirac structures provide a common framework
for the study of presymplectic and Poisson structures, and their recent applications include
generalized complex geometry, symmetries and moment maps, quantization, and more, see e.g. \cite{ABM,AM,BC,BCWZ,CGM,Gua}.

% higher symplectic  and goal
% \cite{BS}

This paper concerns ``higher-order'' versions
of Poisson and Dirac structures, in the spirit of the higher-order symplectic forms that arise in classical field theory \cite{CID,CID2} and various other contexts, see e.g. \cite{BHR,MS,CR}.
Higher analogues of Dirac structures have been considered in field theory \cite{VYM}, in Nambu geometry \cite{Hag, BS}, as well as in the study of  $p$-branes in string theory \cite{zabzine}; a more systematic treatment is developed in \cite{Za}, which was one of the motivations for our work. Here we present another viewpoint to the subject, inspired by the theory of Lie groupoids: as discussed in \cite{BCI}, just as Poisson structures are infinitesimal versions of symplectic groupoids \cite{CDW} (analogously to how Lie algebras linearize Lie groups),
one is led to a natural notion of {\em higher Poisson structure} by considering the infinitesimal counterparts of {\em multisymplectic groupoids} (i.e., Lie groupoids equipped with compatible higher-order symplectic structures). In this paper, we take such higher Poisson structures as the starting point to develop a notion of higher Dirac structure.

\smallskip

\paragraph{\bf Description of the paper} To better explain our perspective, let us consider a manifold $M$, a positive integer $k$,
and the Whitney sum $TM + \wedge^{k} T^*M$, equipped with the fibrewise symmetric $\wedge^{k-1}T^*M$-valued pairing $$
\langle X + \alpha, Y+\beta \rangle := i_Y \alpha + i_X \beta,
$$
and the (higher) {\em Courant-Dorfman bracket} on the space of sections of $TM + \wedge^{k} T^*M$,
\begin{equation}\label{eq:CD}
  \Cour{X+\alpha,Y+\beta}:=[X,Y]+\mathcal{L}_X\beta - i_Y d\alpha.
\end{equation}
We use the notation $\pr_1: TM + \wedge^{k} T^*M \to TM$ and $\mathrm{\pr_2}: TM + \wedge^{k} T^*M\to \wedge^kT^*M$ for the natural projections.

For $k=1$, the pairing and bracket above make $TM+ T^*M$ into the {\em standard} Courant algebroid over $M$.
In this case, {\bf Dirac structures} are defined as {\em lagrangian} subbundles $L \subset TM+ T^*M$ (i.e., $L=L^\perp$ with respect to the pairing)
%, which implies that $\mathrm{rank}(L)=\mathrm{dim}(M)$)
which are {\em integrable}, i.e., involutive with respect to the Courant-Dorfman bracket. Any closed 2-form $\omega\in \Omega^2(M)$ defines a Dirac structure given by the graph of the map $TM\to T^*M$, $X\mapsto i_X\omega$; in fact, closed 2-forms on $M$ are identified with Dirac structures $L$ satisfying $L\cap T^*M =\{0\}$, whereas Poisson structures on $M$ are the same as Dirac structures $L$ such that $L\cap TM=\{0\}$.

For $k> 1$, the very same definition leads to a possible notion of higher Dirac structure:
integrable, lagrangian subbundles $L \subset TM + \wedge^{k} T^*M$, as considered e.g. in \cite{Za}. For example, such subbundles satisfying $L\cap \wedge^k T^*M=\{0\}$ correspond to closed $(k+1)$-forms on $M$.
The condition $L\cap \wedge^k T^*M=\{0\}$, however, falls short of describing
higher Poisson structures: as observed in \cite{BCI}, these are not given by lagrangian subbundles. This led us to develop a new viewpoint to higher Dirac structures that weakens the lagrangian condition, in such a way that the resulting notion encompasses both closed higher-degree forms and higher Poisson structures, hence displaying a richer collection of examples.

Our starting observation in this paper is that there are natural ways to weaken the lagrangian condition for $k>1$, without changing it for $k=1$.
The distinct ways in which higher analogues of Dirac structures may be defined arise from equivalent ways to
describe ordinary Dirac structures; e.g.,  for a subbundle $L\subset TM+ T^*M$ it can be directly verified that the following are equivalent:
\begin{enumerate}
\item[(C1)] $L=L^\perp$;
\item[(C2)] $L\subseteq L^\perp$, and $L\cap TM = \pp(L)^\circ$.
\end{enumerate}
As it turns out, these two conditions are no longer equivalent for $k>1$, so (C2) gives a weaker way to extend the lagrangian condition (we discuss other possibilities in the Appendix).
We refer to a subbundle of $TM+ \wedge^{k}T^*M$ as in (C2) as {\em weakly lagrangian}, and we define {\bf higher Dirac structures} as weakly lagrangian subbundles which are integrable. Our study of higher Dirac structures
 relies on understanding two main ingredients: the pointwise linear algebra and the integrability of isotropic subbundles of $TM+ \wedge^{k}T^*M$. These lead to a description of the leafwise geometry of higher Dirac structures, extending the presymplectic foliation of usual Dirac structures,
 as well as their global integrations.

% These aspects are complementary to those treated in \cite{Za}, which focused on the higher Lie algebras of ``admissible functions''.

This paper is organized as follows.
We first consider higher Poisson structures \cite{BCI} in Section~\ref{sec:hpoiss}; we illustrate how they arise from multisymplectic structures in the presence of symmetries and provide a natural example in field theory, analogous to the Poisson brackets of classical mechanics.
We then introduce higher Dirac structures at the linear level,
in Section~\ref{sec:high-dirac-struct}. In Section~\ref{sec:involution}, we consider higher Dirac structures on manifolds, focusing on the additional integrability condition.
Any integrable isotropic subbundle $L\subset TM+\wedge^{k}T^*M$ has an underlying Lie algebroid, which gives rise to a (singular) foliation on $M$. Our main result (Theorem~\ref{theo:delta}) shows that each leaf $\leaf \hookrightarrow M$ carries a natural differential complex, denoted by $\Osk{\bullet}$, with a natural chain map to complex of differential forms on $\leaf$,
$$
\Osk{\bullet}\to \Omega^{\bullet + k}(\leaf),
$$
in such a way that $L$ is encoded by a distinguished 1-cocycle $\varepsilon_\leaf \in \Osk{1}$ on each leaf (Theorem~\ref{thm:integrability}). For ordinary Dirac structures, the previous chain map is an isomorphism, and this recovers their well-known presymplectic foliations.
We characterize various types of higher Dirac structures in $TM+ \wedge^k TM$, showing e.g. that those  projecting isomorphically onto $TM$ agree with closed $(k+1)$-forms on $M$, while higher Poisson structures are the same as higher Dirac structures intersecting $TM$ trivially.
In Section~\ref{sec:HPSgpd}, we relate higher Dirac structure to the theory of Lie groupoids by identifying their global counterparts (Theorem \ref{thm:integration of HD}), extending the integration of Dirac structures by presymplectic groupoids
of \cite{BCWZ}.

\smallskip

\paragraph{\bf Remark} The discussion in the paper extends with no extra cost to higher Dirac structures in $TM+ (\wedge^{k}T^*M\otimes \mathbb{R}^r)$, which incorporates poly-symplectic \cite{Gu} and poly-Poisson structures \cite{IMV,Ma}, as in \cite{Martz-th}. (More generally, one may consider $TM+ (\wedge^{k}T^*M\otimes E)$ for a vector bundle $E$ equipped with a (partial) flat connection, see \cite[Appendix]{Martz-th}.) For simplicity, we will restrict ourselves to the case $r=1$.

%\vspace{-.0cm}

\tableofcontents

%\vspace{-.1cm}

% \smallskip

\paragraph{\bf Notation and conventions} For a vector space (or vector bundle) $V$, we denote the projections of
$V+ \wedge^k V^*$ to $V$ and $\wedge^kV^*$ by $\mathrm{pr}_1$ and $\pp$, respectively.
Given a subspace $A\subseteq \wedge^k V^*$, we use the notation
$$
A^\circ=\{ X \in V \st i_X\eta=0 \textrm{ for all } \eta \in A\}
$$
for its annihilator.
Analogously, for a subspace $E\subseteq V$, we have an annihilator $\Ann_{\wedge^k V^*}(E)$ in $\wedge^k V^*$;
whenever $k$ is clear from the context and there is no risk of confusion, we will use the simplified notation
$$
\Ann(E)=\{\alpha \in \wedge^k V^* \st i_Y\alpha=0 \textrm{ for all } Y\in E\}.
$$
In this paper, unless stated otherwise, {\em distributions} and {\em foliations} will be meant in the generalized sense of Stefan-Sussmann; we will refer to them as {\em regular} in case they have constant rank.

%%%%%%%%%%%%%%%%%%%%%%%%%%%%%%%%%%%%%%%%%%%%%%%%%%%%%%%%%%%%%%%%%%%%%%%%%%%%%%%%%%%%%%%%%%%%%%%%

\section{Higher Poisson structures}\label{sec:hpoiss}

Before introducing higher Dirac structures, we recall the higher Poisson structures of \cite{BCI} (where more examples can be found).
Let $M$ be a smooth manifold and $k$ be a positive integer.

\begin{definition}\label{def:hpois}
  A {\em higher Poisson structure} (of order $k$) on $M$ is a subbundle $S\subseteq \wedge^k T^*M$ and a bundle map
  $\Lambda: S\to TM$ covering the identity map on $M$, such that
  \begin{itemize}
  \item[(a)] $S^\circ =\{0\}$,
  \item[(b)] $i_{\Lambda(\alpha)}\beta = - i_{\Lambda(\beta)}\alpha$, for all $\alpha,\beta \in S$,
  \item[(c)] the space $\Gamma(S)$ is involutive with respect to the bracket
    $$
    [\alpha,\beta]:= \Lie_{\Lambda(\alpha)}\beta - i_{\Lambda(\beta)}d \alpha = \Lie_{\Lambda(\alpha)}\beta - \Lie_{\Lambda(\beta)}\alpha - d(i_{\Lambda(\alpha)}\beta),
    $$
    and $\Lambda: \Gamma(S)\to \Gamma(TM)$ is bracket preserving.
  \end{itemize}

\end{definition}

When $k=1$, it follows \cite{BCI} from (a) that $S=T^*M$, while (b) implies that $\Lambda$ is a bivector field on $M$, and
(c) boils down to the integrability condition saying that $\Lambda$ is a Poisson structure.

A special class of examples is given by closed forms $\omega \in \Omega^{k+1}(M)$ that are non-degenerate,
in the sense that the map $\omega^\flat :TM \to \wedge^k T^*M$, $X\mapsto i_X\omega$, is injective. Such forms are referred to as {\em multisymplectic} \cite{CID} of degree $k+1$, or simply {\em $k$-plectic} \cite{CR}.
One can regard them as higher Poisson structures with $S=\Ima(\omega^\flat)$ and $\Lambda=(\omega^\flat)^{-1}$.
In fact, $k$-plectic forms are the same as higher Poisson structures $(S,\Lambda)$ of order $k$ for which the map $\Lambda: S\to TM$ is an isomorphism.

As discussed in \cite{BCI}, higher Poisson structures arise as the infinitesimal counterparts
of {\em multisymplectic groupoids}, i.e., Lie groupoids equipped with a multiplicative multisymplectic form, a fact that naturally extends the well-known connection between symplectic groupoids and ordinary Poisson structures \cite{CaFe,CDW,CF}; we will return to Lie groupoids in Section~\ref{sec:HPSgpd}.

We now show how higher Poisson structures may arise as quotients of multisymplectic structures by symmetries,
which leads to a concrete example from classical field theory \cite{Martz-th}.

\begin{example}\label{ex:quot} Let $M$ be an $n$-dimensional manifold equipped with a $k$-plectic form $\omega$. Consider an action $\psi$ of a Lie group $G$ on $M$ satisfying $\psi_g^*\omega = \omega$ for all $g\in G$. Suppose that the action is free and proper, so that $q: M\to M/G$ is a principal bundle, and let $\mathcal{V}\subseteq TM$ be the vertical bundle (tangent to the $G$-orbits, so $\dim(G)=\textup{rk}(\mathcal{V})$). Define $S=\Ima(\omega^\flat)\subseteq \wedge^k T^*M$, which satisfies $S^\circ$= \{0\} since $\omega$ is nondegenerate. Let us assume that the following two conditions hold:
  \begin{itemize}
  \item[(1)] $S\cap \wedge^k \Ann(\mathcal{V})$ has constant rank,
  \item[(2)] $(S\cap \wedge^k \Ann(\mathcal{V}))^\circ \subseteq \mathcal{V}$, which turns out to be equivalent to  requiring that  $\dim(G)\leq n-k$
or $\dim(G)=n$ (see \eqref{eq:doubleAnn-standard} below for details, noticing that $(S\cap \wedge^k \Ann(\mathcal{V}))^\circ = S^\circ + (\wedge^k \Ann(\mathcal{V}))^\circ = (\wedge^k \Ann(\mathcal{V}))^\circ$).
  \end{itemize}
The bundle map $Tq: TM \to q^*T(M/G)$, whose kernel is $\mathcal{V}$, is such that its transpose defines an isomorphism $(Tq)^*: q^* T^*(M/G) \to \Ann(\mathcal{V})$, which extends to
  $$
  q^* \wedge^k T^*(M/G) \stackrel{\sim}{\to} \wedge^k \Ann(\mathcal{V}).
  $$
  By condition (1) and the invariance of $\omega$, we see that there is a well-defined subbundle $S_{red}\subseteq \wedge^k T^*(M/G)$ that corresponds to $S\cap \wedge^k \Ann(\mathcal{V})$ under the previous isomorphism. Moreover, $S_{red}^\circ =\{0\}$ follows from condition (2). Finally, the $G$-invariance of $\omega$ implies that $(\omega^\flat)^{-1}$ induces a map $\Lambda_{red}: S_{red}\to T(M/G)$ so that $(S_{red},\Lambda_{red})$ is a higher Poisson structure on $M/G$ of order $k$.
\end{example}

\medskip

\paragraph{\bf A higher Poisson structure from classical field theory}

In classical field theory \cite{GIMM,KT}, one has the following geometric set-up: a configuration bundle given by a fibre bundle $P\overset{\pi}{\to} B$, and a volume form $\eta$ on the $m$-dimensional manifold $B$.  We let $VP= \ker(T\pi)\subset TP$ be the vertical bundle.

The phase space of this theory is the affine dual of the first jet bundle of $P$, which can be identified with
$$
M = \wedge^m_2\TP =\{ \beta \in \wedge^m \TP\,|\, i_vi_u\beta=0 \,\textrm{ for all }\, u,v \in VP\},
$$
called the {\em extended phase bundle}. We let $q_1: M\to P$ be the natural projection.
The manifold $M$ carries a natural $m$-plectic form  $\omega=-d\theta$, where $\theta$ is the `tautological' $m$-form on $M$ given by
$$
\theta_\beta (X_1,\ldots,X_m) = \beta(Tq_1( X_1),\ldots,Tq_1 (X_m)),
$$
see e.g. \cite{K}. Additionally, $M$ carries an $\mathbb{R}$-action,
$$
\beta \stackrel{r}{\mapsto} \beta + r \pi^*\eta,\;\;\; r\in \mathbb{R},
$$
whose quotient $Z = M/\mathbb{R}$ is a manifold so that the projection $q:M \to M/\mathbb{R}$ is a surjective submersion. This action preserves $\omega$.  The manifold $Z$ is a vector bundle over $P$, called the {\em reduced bundle}, and inherits a higher Poisson structure, as in Example~\ref{ex:quot} above. We will express it explicitly in coordinates. To simplify the notation, we may avoid the use of $\wedge$ in the local description of forms in this example.

Consider coordinates $(x_1,\ldots,x_m)$ on $B$ such that $\eta=dx_1\dots dx_m$, and adapted coordinates $(x_1,\dots,x_m,y_1,\dots,y_n)$ on $P$. Any $\beta\in M$ can be locally written as
$$
\beta = p  \pi^*\eta  + \sum_{l,k} p_{lk} dy_l \wedge \pi^*\eta_k,
$$
where $\eta_k = dx_1 \ldots  dx_{k-1} dx_{k+1} \ldots  d x_m$,
so $M$ has local coordinates $(x_i,y_j,p,p_{lk})$. The canonical $m$-plectic form on $M$ is given by
$$
\omega = -dp  dx_1  \ldots   dx_m - \sum_{l,k} dp_{lk}  dy_l  dx_1 \ldots   dx_{k-1}  dx_{k+1}  \ldots   d x_m.
$$

The $\mathbb{R}$-action on $M$ is such that $\mathcal{V} =  \textup{span}\{ \frac{\partial}{\partial p} \}$,
so $\Ann(\mathcal{V}) = \textup{span}\{dx_i,dy_j,dp_{lk}\}$. In coordinates, the quotient $Z=M/\mathbb{R}$ is given by
$(x_i,y_j,p_{lk})$, and the quotient map is simply $q(x_i,y_j,p,p_{lk})=(x_i,y_j,p_{lk})$. The induced higher Poisson structure on $Z$ is defined by the subbundle
\begin{equation}\label{eq:Sred}
  S_{red} = \textup{span}\{\alpha_l, dx_1\ldots d x_m, \gamma_{li}\} \subset \wedge^mT^*Z,
\end{equation}
where $\alpha_l = \sum_k dp_{lk}dx_1\ldots dx_{k-1} dx_{k+1} \ldots d x_m$, and $\gamma_{li}=dy_ldx_1\ldots dx_{i-1} dx_{i+1} \ldots d x_m$, for $l=1,\ldots,n$ and $i=1,\ldots,m$, and the bundle map $\Lambda_{red}: S_{red}\to TZ$,
\begin{equation}\label{eq:Lamred}
  \Lambda_{red}(\alpha_l) = \frac{\partial}{\partial y_l}, \;\;\;\; \Lambda_{red}(dx_1\ldots d x_m) = 0, \;\;\;\; \Lambda_{red}(\gamma_{li}) = -\frac{\partial}{\partial p_{li}}.
\end{equation}

\begin{example}
  The simplest case of the above construction is that of `time-dependent classical mechanics': for a configuration manifold $Q$, we set $P=\mathbb{R}\times Q \to \mathbb{R}$ the trivial bundle over $B=\mathbb{R}$, endowed with volume form $\eta = dx$. Then $M=  T^*\mathbb{R} \times T^*Q$, with canonical symplectic form $\omega = dx\wedge dp +  \sum_j dy_j\wedge dp_j$, and $Z= T^*Q \times \mathbb{R}$. The higher Poisson structure \eqref{eq:Sred} and \eqref{eq:Lamred} on $Z$, in this case, is just the ordinary Poisson structure
  $$
  \Lambda = \sum_j \frac{\partial}{\partial p_j}\wedge \frac{\partial}{\partial y_j}.
  $$
\end{example}

Our next goal is seeing how higher Poisson structures $(S,\Lambda)$ naturally lead to higher Dirac
structures, given by their graphs
$$
L=\{(\Lambda(\alpha),\alpha)\;|\; \alpha\in S \}\subset TM+ \wedge^k T^*M.
$$
We will first consider higher Dirac structures at the level of linear algebra.

%%%%%%%%%%%%%%%%%%%%%%%%%%%%%%%%%%%%%%%%%%%%%%%%%%%%%%%%%%%%%%%%%%%%%%%%%%%%%%%%%%%%%%%%%%%
\section{Linear theory of higher Dirac structures}\label{sec:high-dirac-struct}

Let $V$ be an $n$-dimensional vector space, take an integer $k$ with $1\leq k\leq  n-1$  (cf. Remark~\ref{rem:k=0,n}), and consider the space
$V+\wedge^k V^*$  equipped with the $\wedge^{k-1}V^*$-valued pairing
\begin{equation}\label{eq:pairing}
  \langle X+\alpha,Y+\beta \rangle= i_X\beta +i_Y\alpha.
\end{equation}

We start by giving a characterization of isotropic subspaces of $V+\wedge^k V^*$ that extends
the well-known fact (see, e.g., \cite{Co}) that lagrangian subspaces of $V+ V^*$ are determined by pairs
$(E,\varepsilon)$, where $E\subseteq V$ is a subspace and $\varepsilon\in \wedge^2 E^*$, via
$$
L(E,\varepsilon)= \{ X+\alpha \st X\in E \;\textrm{ and }\; \alpha|_{E}=i_X\varepsilon \}.
$$

%%%%%%%%%%%%%%%%%%%%%%%%%%%%%%%%%%%%%%%%%%%%%%%%%%%%%%%%%%%%%%%%%%%%%%%%%%%%%%%%%%%%%%%%%%%%%%
\subsection{Isotropic subspaces}

Let $L\subset V+\wedge^k V^*$ be an isotropic subspace with respect to \eqref{eq:pairing}, i.e., $L\subseteq L^\perp$. We use the notation
$$
E=\pr_1(L)\subseteq V,\;\;\; \; A_L = L\cap \wedge^k V^*.
$$
Note that $A_L\subseteq \Ann(E)=L^\perp\cap \wedge^k V^*$ and $\pr_2(L)^\circ = L^\perp\cap V$.

For each  $X\in E$, take $X+\alpha\in L$, the element $X+\alpha'$ belongs to $L$ if and only if $\alpha-\alpha'\in A_L \subseteq \Ann(E)$. In order to describe $L$, we consider the map
\begin{equation}\label{eq:def-epsilon}
  \varepsilon : E \to \wedge^k V^*/A_L,
\end{equation}
where $\varepsilon(X)=\alpha+A_L$ if and only if $X+\alpha\in L$, which gives the possible $k$-forms going together with an element of $E$. By composing $\varepsilon$ with the map
\begin{equation}\label{eq:iotaE}
  \iota_E:\wedge^k V^*/A_L \to E^*\otimes \wedge^{k-1}V^*,\;\;\; \alpha+A_L \mapsto (X \mapsto i_X\alpha),
\end{equation}
we see that the fact that $L$ is isotropic means that $\iota_E \circ \varepsilon$ is skew symmetric, i.e., defines an element in $\wedge^2 E^* \otimes \wedge^{k-1}V^*$.

\begin{definition}\label{def:E-skew}
  We say that a map $\varepsilon : E \to \wedge^k V^*/A_L$ is {\em $E$-skew} when $\iota_E\circ \varepsilon$ belongs to  $\wedge^2 E^* \otimes \wedge^{k-1}V^*$.
\end{definition}

We summarize the discussion in the following proposition.

\begin{proposition}\label{prop:isotropic subsp}
  Given subspaces  $E\subseteq V$, $A_L\subseteq \Ann(E)$ and a $E$-skew map $\varepsilon : E \to \wedge^k V^*/A_L$, the subspace
  \[
  L(E,A_L,\varepsilon)=\{ X + \alpha \st X\in E, \alpha + A_L \in\varepsilon(X) \} \subset V+ \wedge^k V^*
  \]
  is isotropic. Conversely, any isotropic subspace $L$ is of the form $L(E,A_L,\varepsilon)$ with $E=\pr_1(L)$, $A_L=L\cap \wedge^k V^*$ and $\varepsilon$ defined as in \eqref{eq:def-epsilon}.
\end{proposition}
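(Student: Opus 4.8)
The plan is to establish the two assertions separately, in each case unwinding the definitions of the pairing, of the quotient map $\iota_E$ from \eqref{eq:iotaE}, and of the $E$-skew condition; most of the groundwork is in fact already contained in the discussion preceding the statement.

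For the direct implication, I would first note that $L(E,A_L,\varepsilon)$ is a genuine linear subspace: since $E$ and $A_L$ are subspaces and $\varepsilon$ is linear, closure under sums and scalar multiples follows at once from the defining relation $\alpha + A_L = \varepsilon(X)$. The isotropy is the real point. Given $X+\alpha, Y+\beta \in L(E,A_L,\varepsilon)$, I would compute $\langle X+\alpha, Y+\beta\rangle = i_X\beta + i_Y\alpha$ and recognize it as the combination $b(X,Y) + b(Y,X)$, where $b(X,Y) := i_Y\alpha$, for any representative $\alpha$ of $\varepsilon(X)$, is precisely $\iota_E\circ\varepsilon$ viewed as a bilinear map $E\times E \to \wedge^{k-1}V^*$. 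The $E$-skew hypothesis says $b$ lies in $\wedge^2 E^* \otimes \wedge^{k-1}V^*$, i.e. $b$ is skew-symmetric, so the combination $b(X,Y)+b(Y,X)$ vanishes and $L(E,A_L,\varepsilon)$ is isotropic. Here one must check that $b(X,Y)$ is independent of the representative $\alpha$ of the coset $\varepsilon(X)$; this is exactly what $A_L\subseteq\Ann(E)$ guarantees, and it is also what makes $\iota_E$ well-defined on $\wedge^k V^*/A_L$ in the first place.

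For the converse, starting from an isotropic $L$, I would verify that the data $(E,A_L,\varepsilon)$ satisfy the stated hypotheses. The inclusion $A_L\subseteq\Ann(E)$ follows by pairing an arbitrary $\gamma\in A_L$, viewed as $0+\gamma\in L$, against any $X+\alpha\in L$ and invoking $L\subseteq L^\perp$. That $\varepsilon$ of \eqref{eq:def-epsilon} is well-defined and linear is immediate: if $X+\alpha, X+\alpha'\in L$ then $\alpha-\alpha'\in L\cap\wedge^k V^* = A_L$, so the coset $\alpha+A_L$ depends only on $X$, and linearity is inherited from that of $L$. Reading the isotropy computation of the direct implication in reverse shows that $\iota_E\circ\varepsilon$ is skew, i.e. $\varepsilon$ is $E$-skew. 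Finally I would prove $L = L(E,A_L,\varepsilon)$ by double inclusion: the inclusion $L\subseteq L(E,A_L,\varepsilon)$ is immediate from the definition of $\varepsilon$, while for the reverse inclusion any $X+\alpha$ with $\alpha+A_L=\varepsilon(X)$ differs from some $X+\alpha'\in L$ by an element $\alpha-\alpha'\in A_L\subseteq L$, hence lies in $L$.

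The only delicate points — and the closest thing to an obstacle in an otherwise formal argument — are the bookkeeping ones: ensuring that $\iota_E$ descends to the quotient $\wedge^k V^*/A_L$ (which is precisely why the hypothesis $A_L\subseteq\Ann(E)$ is imposed) and correctly identifying the symmetric pairing $\langle \cdot,\cdot\rangle$ with the combination $b(X,Y)+b(Y,X)$ built from the skew bilinear form $\iota_E\circ\varepsilon$. Once these identifications are made explicit, both implications reduce to a direct unwinding of definitions, with no additional input required.
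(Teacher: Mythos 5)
Your proof is correct and follows essentially the same route as the paper, which proves this proposition via the discussion immediately preceding it (well-definedness of $\varepsilon$ from $L\cap\wedge^k V^*=A_L$, and the identification of isotropy with skewness of $\iota_E\circ\varepsilon$, where $A_L\subseteq\Ann(E)$ makes $\iota_E$ descend to the quotient). You merely make explicit the bookkeeping the paper leaves implicit, including the double-inclusion argument for $L=L(E,A_L,\varepsilon)$; no gaps.
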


Let $L=L(E,A_L,\varepsilon)$ be isotropic.
Observe that if $\dim(E)> n-k$, then $\Ann(E)=\{0\}$, and hence $A_L=\{0\}$. It follows that
$L$ must be necessarily the graph of an $E$-skew map $\varepsilon: E\to \wedge^k V^*$,
\begin{equation}\label{eq:LE}
  L= L(E,0,\varepsilon) = \{ X + \varepsilon (X) \,|\, X\in E \}.
\end{equation}
Note also that, if $n>\dim(E)> n-k$, then $\Ann(E)^\circ = V \supsetneq E$.
On the other hand, if $\dim (E) \leq n-k$, or $\dim(E) = n$, then
\begin{equation}\label{eq:doubleAnn-standard}
  \Ann(E)^\circ = E.
\end{equation}
We distinguish isotropic subspaces according to these properties:

\begin{definition}\label{def:standard}
  An isotropic subspace $L(E,A_L,\varepsilon)$ is said to be {\em standard} when
  $\Ann(E)^\circ = E$, which happens if and only if
  \begin{equation}\label{eq:dim constraint}
    \dim(E)=n \text{\ \ \ or\ \ \ }\dim(E)\leq n-k.
  \end{equation}
  Otherwise we call it non-standard.
  % , in which case it is of the form $L(E,0,\varepsilon)$ with $E \neq V$.
\end{definition}

As mentioned above, when $L$ is non-standard, it must be of the form $L(E,0,\varepsilon)$ with $E \neq V$.

\begin{remark}\label{rem:dim}
  For later use, we note that, given an isotropic $L\subset V+ \wedge^k V^*$, we have a constraint on the dimension of $\pp(L)$: if $E\neq 0$ then
  \begin{equation}\label{eq:dim-constraint}
    \dim (\pp(L))        \leq       \dim (E) + { n-1 \choose k }.        %\leq     n + { n-1 \choose k}.
  \end{equation}
  To check that, take $X+\alpha\in L$ with $X\neq 0$. Then $i_X\beta = -i_Y\alpha$ for any $Y+\beta\in L$. So the
  images of the maps $i_X |_{\pp(L)}: \pp(L) \to \wedge^{k-1}V^*$ and $\alpha|_E: E \to \wedge^{k-1}V^*$
  must be the same. It follows that
  \begin{align}\label{eq:dimA}
    \dim (\pp(L)) + \dim (\ker(\alpha|_E)) & = \dim(E) + \dim (\ker(i_X|_{\pp(L)}))\\ & \leq \dim(E) + \dim(\ker (i_X)).\nonumber
  \end{align}
  As $\dim (\ker(i_X: \wedge^kV^* \to \wedge^{k-1}V^*))={ n-1 \choose k }$ for $X\neq 0$, the constraint \eqref{eq:dim-constraint} follows.
\end{remark}

%%%%%%%%%%%%%%%%%%%%%%%%%%%%%%%%%%%%%%%%%%%%%%%%%%%%%%%%%%%%%%%%%%%
\subsection{Weakly lagrangian subspaces}\label{sec:weak-lagr-subsp}

 In the theory of higher Dirac structures, particular types of isotropic subspaces $L\subset V + \wedge^k V^*$  play a key role. We say that $L$ is {\em lagrangian} if $L=L^\perp$. Clearly $L=V$ and $L=\wedge^k V^*$ are lagrangian subspaces. More examples may be obtained as follows.

\begin{example}\label{ex:form}
  Let $\omega \in \wedge^{k+1}V^*$. Then its graph,
  $$
  L= \mathrm{gr}(\omega) = \{X + i_X\omega \,|\, X\in V\} \subset V+ \wedge^k V^*,
  $$
  is lagrangian.
\end{example}
Note that, while for any isotropic subspace $L\subset V + \wedge^k V^*$ we have $L\cap \wedge^k V^*\subseteq \Ann(E)$, for lagrangian subspaces we have $L\cap \wedge^k V^* = L^\perp \cap \wedge^k V^* = \Ann(E)$, i.e.,
\begin{equation}\label{eq:lag}
  A_L=\Ann(E).
\end{equation}

As mentioned in the introduction, the lagrangian condition is too strong to encompass higher Poisson structures.
So we introduce the following

\begin{definition}\label{def:weakly-lagrangian}
  An isotropic subspace $L\subset V+\wedge^k V^*$ is {\em weakly lagrangian} if
  \begin{equation}\label{eq:wlag}
    L\cap V= \pp(L)^\circ =  L^\perp \cap V.
  \end{equation}
  (Note that the second equality always holds.)

  % $ L\cap V=\pp(L)^0$, or equivalently $L\cap V = L^\perp \cap V$.
\end{definition}

Clearly any lagrangian space is weakly lagrangian.
For $k=1$, one may directly check that these two notions are equivalent.
Just as lagrangian subspaces of $V+ V^*$ define Dirac structures on a vector space $V$, weakly lagrangian subspaces of  $V+\wedge^k V^*$ will be {\em higher Dirac structures} (of order $k$)
on $V$.

\begin{remark}
  \label{rem:k=0,n}
  We consider $V+\wedge^{k} V^*$ for $1\leq k\leq \dim(V)-1$ as the cases $k=0,\dim(V)$ can be easily described on their own. For $k=0$, we have $V+\mathbb{R}$ and the pairing vanishes. Hence, any subbundle is isotropic and the only weakly lagrangian ones are $V$ and the total. For $k=n$, we have $V + \det V^*$. In this case, the isotropic subspaces are the subspaces of $V$ and $\det V^*$, whereas the only weakly lagrangian ones are $V$ and $\det V^*$. \end{remark}

Being isotropic, any weakly lagrangian subspace is of the form $L(E,A_L,\varepsilon)$. Since $L\cap V = \ker(\varepsilon) \subseteq \pp(L)^\circ$ always holds, we see that the weakly lagrangian condition in the previous definition is equivalent to
\begin{equation}\label{eq:weak-Lagrangian sbspace}
  \pp(L)^\circ \subseteq \ker (\varepsilon),
\end{equation}
or, alternatively,
$$
(\pr^{-1} (\Ima(\varepsilon)))^\circ \subseteq \ker(\varepsilon),
$$
where $\pr: \wedge^k V^* \to \wedge^k V^*/A_L$ is the natural projection.

\begin{example}\label{ex:lHP}
  For a pair $(S, \Lambda)$, where $S\subseteq \wedge^k V^*$ is a subspace and $\Lambda: S\to V$ is a linear map, consider its `graph',
  $$
  L = \{ \Lambda(\alpha) + \alpha \,|\, \alpha\in S\} \subset V+ \wedge^k V^*.
  $$
  The condition that $L$ is isotropic amounts to the property
  $$
  i_{\Lambda(\alpha)}\beta = -i_{\Lambda(\beta)}\alpha,
  $$
  and $L$ is weakly lagrangian if and only if $S^\circ = L^\perp \cap V = L\cap V = \{0\}$.
  We refer to a pair $(S,\Lambda)$ with these properties as a {\em higher Poisson structure} on $V$ (cf. (a) and (b) in Definition~\ref{def:hpois}).
  In other words, linear-algebraic versions of higher Poisson structures are examples of weakly lagrangian subspaces.
  As mentioned in Section~\ref{sec:hpoiss}, particular examples are given by elements $\omega\in \wedge^{k+1}V^*$ which are non-degenerate: in this case, $S=\Ima(\omega^\flat)$,  $\Lambda=(\omega^\flat)^{-1}$, and the corresponding $L$ is, moreover, lagrangian. On the other hand, whenever $S=\wedge^k V^*$, the subspace $L$ is also automatically lagrangian: for $Y+\beta \in L^\perp$, we have $i_Y\alpha = - i_{\Lambda(\alpha)}\beta = i_{\Lambda(\beta)}\alpha$ for all $\alpha \in \wedge^kV^*$, so $Y=\Lambda(\beta)$.
  A full characterization of higher Poisson structures whose graphs are lagrangian will be given in Proposition~\ref{prop:HP} below.
\end{example}

\begin{remark} We point out that higher Poisson structures $(S,\Lambda)$ with $S=\wedge^k V^*$ and $\Lambda\neq 0$ are very restricted: the only possibilities are $k=1$ and $k=n-1$, which occur when $\Lambda$ is defined by a bivector field or top-degree multivector, respectively. This is proven in \cite[Prop.~3.4]{Za}, and we give a short alternative argument. From \eqref{eq:dim-constraint} we have
  $$
  {n \choose k} \leq n +  { n-1 \choose k},
  $$
  which restricts the possible values of $k$ to $0,1,2,n-1,n$. The cases $k=0,n$  are ruled out by Remark~\ref{rem:k=0,n}.
  For $k=2$, we may assume that $n>3$. As $\Lambda\neq 0$, there exists $X+\alpha\in L$ with $X\neq 0$,
  for which \eqref{eq:dimA} becomes $n-1 + \dim (\ker(\alpha|_E)) = \dim(E)$.
  In particular, $n-1\leq \dim (E)$, so $\Ann(E)=\{0\}$, as $k=2$, and
  consequently $A_L=\{0\}$. But this is a contradiction with the map
  $\varepsilon:E\to \wedge^2 V^*/A_L=\wedge^2 V^*$ being surjective, as
  ${n\choose 2}>n$ for $n>3$. So $k=2$ is not possible.
\end{remark}

\begin{example}\label{ex:lHP1}
For any subspace $S\subseteq \wedge^k V^*$, $L =S^\circ + S$ is weakly lagrangian  with $\varepsilon$ identically  $0\in\wedge^k T^*M/S$.  In particular, any subspace $S\subseteq \wedge^k V^*$ such that $S^\circ=\{0\}$ is weakly lagrangian (this is also a special case of the previous example, with $\Lambda=0$). A concrete example is the span of a non-degenerate $k$-form $\alpha \in \wedge^k V^*$, i.e., $S=\langle \alpha \rangle = L$, which is a dimension-one subspace. In the particular case $L = S\subseteq \wedge^k V^*$, we have that $L^\perp = \wedge^k V^*$, so $L$ is not lagrangian unless $S=\wedge^k V^*$.
\end{example}

Hence, weakly lagrangian subspaces of $V+ \wedge^k V^*$ are not necessarily lagrangian.

While Dirac structures on a vector space can be always restricted to subspaces \cite[Sec.~1.4]{Co}, the situation
for higher Dirac structures is more subtle. Let $L=L(E,A_L,\varepsilon)$ be an isotropic subspace of $V+\wedge^k V^*$ and consider a subspace $i:W\hookrightarrow V$. Denote by $i^*\varepsilon$ the composition
$$
E\cap W\stackrel{i|_{E\cap W}}{\longrightarrow} E \stackrel{\varepsilon}{\longrightarrow} \wedge^kT^*M/A_L \stackrel{i^*}{\longrightarrow} \wedge^k W^*/i^*A_L.
$$
Then
$$
L_W=L(E\cap W,i^* A_L,i^*\varepsilon)
$$
is an isotropic subspace of $W+\wedge^k W^*$. However, even if $L$ is weakly lagrangian, the subspace $L_W$ need not be weakly lagrangian,  as the next example shows.

\begin{example}
Consider $V=\RR^5$ with basis $\{e_1,\ldots,e_5\}$ and dual basis $\{e^1,\ldots,e^5\}$.  Let $L=L(E,0,\varepsilon)$ with $E=\textup{span}\{ e_1,e_2 \}$ and  $\varepsilon=e^1\otimes( e^4\wedge e^5) + e^2\otimes (e^3\wedge e^4)$, which is higher Poisson and not lagrangian. The restriction to $W_1=\textup{span}\{ e_1,e_2 \}$ gives $L_{W_1}=W_1$, which is lagrangian. The restriction  to $W_2=\textup{span}\{ e_2,e_3,e_4 \}$ gives $L_{W_2}=L(E_2,0,\varepsilon_2)$ with $E_2=\textup{span}\{e_2 \} $ and $\varepsilon_2=e^2\otimes (e^3 \wedge e^4)$, which is higher Poisson.  The restriction  to $W_3=\textup{span}\{e_1,e_2,e_3,e_4 \}$ gives $L_{W_3}=L(E_3,0,\varepsilon_3)$ with $E_3=\textup{span}\{e_1,e_2 \} $ and $\varepsilon_3=e^2\otimes (e^3 \wedge e^4)$, which is weakly lagrangian but not higher Poisson, as $L_{W_3}\cap W_3 = \textup{span}\{e_1\}$. Finally, the restriction to $W_4=\textup{span}\{e_1, e_2,e_4\}$ gives $L_{W_4}=\textup{span}\{e_1, e_2\}$, which is not weakly lagrangian, as $\pr_2(L)^\circ=\{0\}^\circ = W_4\neq \textup{span}\{e_1, e_2\}$.
\end{example}

Dirac structures $L\subset V+V^*$ such that $L\cap V^*=\{0\}$ are graphs of elements in $\wedge^2V^*$.
For weakly lagrangian subspaces in $V+\wedge^k V^*$, we have the following:

\begin{proposition}\label{prop:inters1}
  A weakly lagrangian subspace $L\subset V+ \wedge^k V^*$ such that $A_L = \{0\}$, i.e., $L\cap \wedge^k V^*= \{0\}$, 
  is of the form $L(E,0,\varepsilon)$ (as in \eqref{eq:LE}) with $\Ima(\varepsilon)^\circ = \ker (\varepsilon)$, and it is lagrangian if and only if $E=V$.
\end{proposition}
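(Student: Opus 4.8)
The plan is to reduce all three assertions to a single extension statement about $E$-skew maps and then read everything off from it.

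First, the shape of $L$ and the identity $\Ima(\varepsilon)^\circ=\ker(\varepsilon)$ are immediate from unwinding the definitions. Since $A_L=L\cap\wedge^k V^*=\{0\}$, Proposition~\ref{prop:isotropic subsp} gives $L=L(E,0,\varepsilon)$ with $\varepsilon\colon E\to\wedge^k V^*$ a genuine ($E$-skew) map, i.e. $L=\{X+\varepsilon(X)\mid X\in E\}$ as in \eqref{eq:LE}. For such a graph one has $\pp(L)=\Ima(\varepsilon)$ and $L\cap V=\ker(\varepsilon)$, so the weakly lagrangian condition \eqref{eq:wlag}, in the equivalent form \eqref{eq:weak-Lagrangian sbspace}, becomes exactly $\Ima(\varepsilon)^\circ=\ker(\varepsilon)$. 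This settles the first part with no real work.

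The heart of the argument is the following \emph{extension lemma}: every $E$-skew map $\varepsilon\colon E\to\wedge^k V^*$ is of the form $X\mapsto i_X\Omega$ for some $\Omega\in\wedge^{k+1}V^*$ and all $X\in E$. I would prove this in coordinates: completing a basis $e_1,\dots,e_d$ of $E$ to a basis of $V$ and writing $\Omega=\sum_{|I|=k+1}\Omega_I\,e^I$, the equations $i_{e_a}\Omega=\varepsilon(e_a)$ (for $a\le d$) prescribe every coefficient $\Omega_I$ with $I\cap\{1,\dots,d\}\neq\emptyset$; the $E$-skewness of $\varepsilon$ — which gives $i_{e_b}\varepsilon(e_a)=-i_{e_a}\varepsilon(e_b)$, in particular $i_{e_a}\varepsilon(e_a)=0$ — is precisely what makes these prescriptions mutually consistent when $I$ contains two or more indices $\le d$, while the coefficients $\Omega_I$ with $I\subseteq\{d+1,\dots,n\}$ (if any) may be set to zero. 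This is the step I expect to be the main obstacle, in the sense that it is the only genuine computation; everything else is formal.

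Granting the lemma, fix $\Omega\in\wedge^{k+1}V^*$ with $i_X\Omega=\varepsilon(X)$ for all $X\in E$. If $E=V$, then $L=\{X+i_X\Omega\mid X\in V\}=\mathrm{gr}(\Omega)$, which is lagrangian by Example~\ref{ex:form}. Conversely, if $E\neq V$, I would pick $Y\in V\setminus E$ and test $Y+i_Y\Omega$: for every $Z+\varepsilon(Z)\in L$ one computes $\langle Y+i_Y\Omega,\,Z+\varepsilon(Z)\rangle=i_Y\varepsilon(Z)+i_Z i_Y\Omega=i_Y\varepsilon(Z)-i_Y i_Z\Omega=i_Y\varepsilon(Z)-i_Y\varepsilon(Z)=0$, so $Y+i_Y\Omega\in L^\perp$; since its $\pr_1$-component is $Y\notin E=\pr_1(L)$, it does not lie in $L$. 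Hence $L\subsetneq L^\perp$ and $L$ is not lagrangian, which proves that $L$ is lagrangian if and only if $E=V$. (As a cross-check, note that for the direction $E\neq V\Rightarrow$ not lagrangian one could instead invoke \eqref{eq:lag}, which forces $A_L=\Ann(E)=\{0\}$ and hence $\dim E>n-k$; but the extension lemma produces the explicit witness $Y+i_Y\Omega$ and avoids this dimension bookkeeping.)
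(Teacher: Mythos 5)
Your proposal is correct, and it reorganizes the paper's argument around a different key lemma. The paper proves the lagrangian dichotomy by showing $\pr_1(L^\perp)=V$: for each fixed $X\in V$ it extends the $E$-skew prescription $Y\mapsto -i_X\varepsilon(Y)$, a map $E\to\wedge^{k-1}V^*$, to a $k$-form $\alpha$ with $X+\alpha\in L^\perp$, using a complement $C$ of $E$, i.e.\ the decomposition \eqref{eq:k-product-V-E-C}; lagrangian then forces $E=\pr_1(L)=\pr_1(L^\perp)=V$. You instead extend $\varepsilon$ itself, one degree higher, to a single $(k+1)$-form $\Omega$ with $i_X\Omega=\varepsilon(X)$ for $X\in E$; your consistency analysis in coordinates (pairwise compatibility of pivots via $i_{e_b}\varepsilon(e_a)=-i_{e_a}\varepsilon(e_b)$, including the diagonal $i_{e_a}\varepsilon(e_a)=0$) is sound, and it is the same mechanism that underlies \eqref{eq:k-product-V-E-C}, just in degree $k+1$ rather than $k$. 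What your packaging buys: the structural statement $L\subseteq\mathrm{gr}(\Omega)\subseteq L^\perp$, done once globally instead of pointwise in $X$, with explicit witnesses $Y+i_Y\Omega\in L^\perp\setminus L$ for $Y\notin E$, and the case $E=V$ reduced directly to Example~\ref{ex:form}; moreover, your extension lemma is exactly the fact the paper later uses without proof in the $B$-field section (``take any skew-symmetric extension $B\in\wedge^{k+1}V^*$ of $-\varepsilon$'', there under the extra hypothesis $\iota_E\circ\varepsilon=0$), so your argument supplies that as well. What the paper's pointwise version buys is only slightly less bookkeeping per extension. Two minor remarks: first, like the paper's proof, yours never uses the weakly lagrangian hypothesis in the dichotomy — it holds for any isotropic $L$ with $A_L=\{0\}$, the hypothesis entering only in the identity $\Ima(\varepsilon)^\circ=\ker(\varepsilon)$; second, your parenthetical cross-check via \eqref{eq:lag} is not a complete alternative, since it yields only $\dim(E)>n-k$ rather than $E=V$, and closing the gap by Lemma~\ref{lem:Lagrangian sbspace} would be circular (that lemma is deduced from this proposition) — but your main line does not rely on it.
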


So in the lagrangian case the condition $L\cap \wedge^k V^*= \{0\}$ implies that $L$ is the graph of a $(k+1)$-form, as in Example~\ref{ex:form}.

\begin{proof}
  The first assertion is an immediate consequence of Proposition~\ref{prop:isotropic subsp}.

  For the second assertion, it is enough to prove that $\pr_1(L^\perp)=V$.
  So, for any $X\in V$, we must find $\alpha\in \wedge^k V^*$ such that
  $X+\alpha\in L^\perp$. By orthogonality with $Y+\varepsilon(Y)\in L$ we
  have, for any $Y\in E$,
  $$
  i_Y\alpha=-i_X \varepsilon(Y).
  $$  % ,\;\;\;  \textrm{ for all }\; Y\in E.
  This prescribes what $\iota_E (\alpha):E\to \wedge^{k-1} V^*$ should be. It remains to
  show that this can be extended to a $k+1$-form $\alpha$. Choose a complement
  $C$ to $E$ in $V$. For $Y\in C$, $i_Y \alpha\in \wedge^{k-1} V^*$ evaluated on $X_1,
  \ldots, X_{k-1}\in V$ with $X_j\in E$ is given by
  $$
  i_Y\alpha (X_1,\ldots, X_j,\ldots , X_{k-1}) = -
  i_{X_j}\alpha(X_1,\ldots,Y,\ldots X_{k-1}),
  $$
  and we set $i_Y\alpha$ evaluated on elements of $C$ to vanish. This last argument can be actually summed up in the identity
  \begin{equation}
    \wedge^k V^* = E^* \otimes_{E\textrm{-sk}} \wedge^{k-1}V^* + \wedge^k C^*,
    \label{eq:k-product-V-E-C}
  \end{equation}
  where $E^* \otimes_{E\textrm{-sk}} \wedge^{k-1}V^*$ denotes the elements of $E^* \otimes \wedge^{k-1}V^*$ that are $E$-skew when seen as maps $E\to \wedge^{k-1}V^*$.
\end{proof}

The last result can be used to give a simple proof of the following characterization of lagrangian subspaces, see \cite[Lemma~A.1]{Za}:

\begin{lemma}\label{lem:Lagrangian sbspace}
  A subspace $L\subset V+ \wedge^k V^*$ is lagrangian if and only if $L$ is a standard isotropic subspace and $\Ann(E) = A_L$.
\end{lemma}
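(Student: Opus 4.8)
The plan is to prove the two implications separately, in both cases reducing to the structural facts already established for isotropic subspaces and, crucially, to Proposition~\ref{prop:inters1}. Throughout I write $E=\pr_1(L)$ and $A_L=L\cap\wedge^k V^*$, and I will use freely the elementary identity $L^\perp\cap\wedge^k V^*=\Ann(E)$, valid for any subspace $L$: an element $0+\beta$ pairs with $X+\alpha\in L$ as $i_X\beta$, so it lies in $L^\perp$ exactly when $i_X\beta=0$ for all $X\in E$.

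For the forward implication, I would assume $L=L^\perp$. Then $L$ is isotropic, and the equality $A_L=\Ann(E)$ is precisely \eqref{eq:lag}. The only remaining point is that $L$ is standard, i.e.\ $\Ann(E)^\circ=E$, and here I would argue by contradiction using the description of non-standard subspaces recalled after Definition~\ref{def:standard}: were $L$ non-standard, it would take the form $L(E,0,\varepsilon)$ with $A_L=\{0\}$ and $E\neq V$; being lagrangian it is in particular weakly lagrangian with $A_L=\{0\}$, so Proposition~\ref{prop:inters1} would force $E=V$, a contradiction. Hence $L$ is standard.

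For the converse, assume $L$ is standard isotropic with $\Ann(E)=A_L$; since $L\subseteq L^\perp$ already holds, I only need $L^\perp\subseteq L$. Given $Y+\beta\in L^\perp$, pairing against the elements of $A_L=L\cap\wedge^k V^*$ gives $i_Y\eta=0$ for all $\eta\in A_L=\Ann(E)$, so $Y\in\Ann(E)^\circ=E$ by the standard hypothesis. Choosing any $Y+\gamma\in L$ and subtracting, the difference $\beta-\gamma$ lies in $L^\perp\cap\wedge^k V^*=\Ann(E)=A_L\subseteq L$, whence $Y+\beta=(Y+\gamma)+(\beta-\gamma)\in L$.

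Both implications are short once the ingredients are in place; the only genuine content is the passage to standardness in the forward direction, and I expect the main (minor) obstacle to be verifying that the non-standard case is truly incompatible with being lagrangian — which is exactly what Proposition~\ref{prop:inters1} supplies, as foreshadowed in the sentence preceding the statement. A more hands-on alternative for this step would avoid Proposition~\ref{prop:inters1} and instead, for $X\in\Ann(E)^\circ$, directly construct $\alpha$ with $X+\alpha\in L^\perp=L$ through the extension identity \eqref{eq:k-product-V-E-C}, checking that the prescribed contraction $Y\mapsto -i_X\beta_Y$ is well defined (using $X\in A_L^\circ$) and $E$-skew (using isotropy of $L$), thereby forcing $X\in\pr_1(L)=E$; this recovers the same conclusion but merely re-runs the mechanism already packaged in Proposition~\ref{prop:inters1}.
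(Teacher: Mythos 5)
Your proof is correct and follows essentially the same route as the paper: the forward direction combines \eqref{eq:lag} with Proposition~\ref{prop:inters1} to rule out the non-standard case (the paper phrases this as ``if $\dim(E)>n-k$ then $A_L=\{0\}$, so $E=V$'' rather than as a contradiction, but it is the same argument), and your converse is word-for-word the paper's argument via $X\in(L\cap\wedge^k V^*)^\circ=\Ann(E)^\circ=E$ and the difference $\beta-\gamma\in L^\perp\cap\wedge^k V^*=\Ann(E)=A_L$. The hands-on alternative you sketch via \eqref{eq:k-product-V-E-C} is, as you note yourself, just Proposition~\ref{prop:inters1} unpacked, so there is nothing to add.
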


\begin{proof}
  If $L$ is lagrangian, we have already observed in (\ref{eq:lag}) that $\Ann(E)=A_L$ holds, and if $\dim(E)> n-k$, we have $L\cap \wedge^k V^* =\{0\}$, i.e., $A_L=\{0\}$. By Proposition~\ref{prop:inters1}, we must have $E=V$. So $\dim(E)\leq n-k$, or $\dim(E)=n$.

  For the converse, given $X+\alpha\in L^\perp$, we have
  $$
  X\in (L\cap \wedge^k V^*)^\circ= \Ann(E)^\circ = E
  $$
  by \eqref{eq:doubleAnn-standard}, so there is $\alpha' \in \wedge^k V^*$ such that $X+\alpha'\in L$. The difference $\alpha-\alpha'$ belongs to $L^\perp\cap \wedge^k V^*$, so is in $\Ann(E)=L\cap \wedge^k V^*$, and hence in $L$. It follows that $X+\alpha = (X + \alpha') + (\alpha-\alpha') \in L$.
\end{proof}

\begin{example}\label{ex:E}
  Note that for $E\subseteq V$ the subspace $L=E+\mathrm{Ann}(E)$ is such that $\pr_2(L)^\circ = \Ann(E)^\circ$ and 
  $L\cap V = E$.   So it is weakly lagrangian, i.e., $\pr_2(L)^\circ = L\cap V$, if and only if $\Ann(E)^\circ=E$, which says that $L$ is standard. It results from Lemma \ref{lem:Lagrangian sbspace} that $L$ must be lagrangian. On the other hand, for $A_L\subset \Ann(E)$ such that $A_L^\circ = E$, the subspace $E+A_L$ is weakly lagrangian.
\end{example}

\begin{remark} The previous lemma leads to an alternative characterization of standard isotropic subspaces of $V+ \wedge^k V^*$: an isotropic $L$ is standard if and only if there is a lagrangian subspace $L_0\supseteq L$ with $\pr_1(L_0)=\pr_1(L)$. Since any lagrangian is standard, such $L_0$ cannot exist if $L$ is not standard. Conversely, writing $L(E,A_L,\varepsilon)$ for a standard $L$, we see that $L_0=L(E,\Ann(E),\varepsilon_0)$, where $\varepsilon_0$ is the composition of $\varepsilon$ with the natural projection $\wedge^k V^*/A_L \to \wedge^k V^*/\Ann(E)$.
\end{remark}

Just as Poisson bivectors on $V$ are identified with lagrangian subspaces $L\subset V+ V^*$ such that $L\cap V=\{0\}$, we see (cf. Example~\ref{ex:lHP}) that higher Poisson structures on $V$ are precisely given by weakly lagrangian subspaces of $V+ \wedge^k V^*$ intersecting $V$ trivially.

\begin{proposition}\label{prop:HP}
  Let $L\subseteq V+ \wedge^k V^*$ be weakly lagrangian. Then
  $L\cap V=\{0\}$ if and only if $L$ is the graph of a higher Poisson structure $(S,\Lambda)$.
  In this case, $L$  is lagrangian if and only if it is standard (i.e., $\mathrm{rank}(\Lambda)=n$ or $\mathrm{rank}(\Lambda)\leq n-k$), and $\Ann(\Ima(\Lambda))= \ker (\Lambda)$.
\end{proposition}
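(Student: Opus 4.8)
The plan is to treat the two assertions separately: the equivalence via an explicit construction of $\Lambda$, and the lagrangian criterion by specializing Lemma~\ref{lem:Lagrangian sbspace}. For the equivalence, the point is that $L\cap V=\{0\}$ is exactly the condition making $\pp|_L$ injective. Assuming first that $L$ is weakly lagrangian with $L\cap V=\{0\}$, I would set $S=\pp(L)$ and observe that $\ker(\pp|_L)=L\cap V=\{0\}$, so $\pp|_L\colon L\to S$ is a linear isomorphism; composing its inverse with $\pr_1$ yields a map $\Lambda\colon S\to V$ with $L=\{\Lambda(\alpha)+\alpha\mid \alpha\in S\}$. To see that $(S,\Lambda)$ is a higher Poisson structure I would check the two conditions of Example~\ref{ex:lHP}: property~(b), $i_{\Lambda(\alpha)}\beta=-i_{\Lambda(\beta)}\alpha$, is the vanishing of $\langle \Lambda(\alpha)+\alpha,\Lambda(\beta)+\beta\rangle$, i.e.\ isotropy of $L$; and property~(a), $S^\circ=\{0\}$, is the weakly lagrangian condition $\pp(L)^\circ=L\cap V=\{0\}$. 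Conversely, if $L$ is the graph of a higher Poisson structure $(S,\Lambda)$, then any $X\in L\cap V$ has the form $\Lambda(\alpha)+\alpha$ with $\alpha=0$, hence $X=\Lambda(0)=0$, so $L\cap V=\{0\}$ (and such $L$ is weakly lagrangian by Example~\ref{ex:lHP}, since $S^\circ=\{0\}$).

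For the lagrangian criterion I would write $L=L(E,A_L,\varepsilon)$ and translate the invariants $E$ and $A_L$ into data of $\Lambda$. Since $\pr_1(L)=\Ima(\Lambda)$ we get $E=\Ima(\Lambda)$, so $\dim(E)=\mathrm{rank}(\Lambda)$; and since $0+\alpha\in L$ forces $\alpha\in S$ with $\Lambda(\alpha)=0$, we get $A_L=L\cap\wedge^k V^*=\ker(\Lambda)$. Feeding this into Lemma~\ref{lem:Lagrangian sbspace}, which states that $L$ is lagrangian if and only if it is standard and $\Ann(E)=A_L$, yields precisely the claim: $L$ is lagrangian if and only if it is standard---equivalently, by \eqref{eq:dim constraint}, $\mathrm{rank}(\Lambda)=n$ or $\mathrm{rank}(\Lambda)\leq n-k$---and $\Ann(\Ima(\Lambda))=\ker(\Lambda)$.

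I expect no genuinely hard step here: the argument is a dictionary between the graph picture and the $L(E,A_L,\varepsilon)$ picture, with Lemma~\ref{lem:Lagrangian sbspace} doing the real work. The only place deserving care is the identification $A_L=\ker(\Lambda)$: even though $L\cap V=\{0\}$, it would be wrong to expect $A_L=\{0\}$, since $\Lambda$ may have a nontrivial kernel---and it is exactly this kernel that enters the lagrangian condition through $\Ann(\Ima(\Lambda))=\ker(\Lambda)$.
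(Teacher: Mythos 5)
Your proof is correct and takes essentially the same route as the paper's: the equivalence is exactly the content of Example~\ref{ex:lHP} (viewing $L$ as the graph of $\Lambda\colon S=\pp(L)\to V$, with isotropy giving the skew condition and the weakly lagrangian condition giving $S^\circ=\{0\}$), and the lagrangian criterion is, as in the paper, a direct application of Lemma~\ref{lem:Lagrangian sbspace}. Your explicit dictionary $E=\Ima(\Lambda)$, $A_L=\ker(\Lambda)$ only spells out what the paper's one-line appeal to that lemma leaves implicit.
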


\begin{proof}
  This is basically explained in Example~\ref{ex:lHP}: a subspace $L\subset V+ \wedge^k V^*$
  satisfies $L\cap V=\{0\}$ if and only if it is the graph of a map $\Lambda:\pp(L)\to V$. Let $S=\pp(L)$.
  Then $L$ is isotropic if and only if $i_{\Lambda(\alpha)}\beta= - i_{\Lambda(\beta)}\alpha$ for all $\alpha,\beta \in S$, and $L$ is weakly lagrangian if and only if $S^\circ=\{0\}$. The assertion about the lagrangian case is a direct consequence of Lemma~\ref{lem:Lagrangian sbspace}.
\end{proof}

Recall that a non-degenerate form $\omega\in \wedge^{k+1}V^*$ defines a higher Poisson structure on $V$  with $L$ lagrangian (see Example~\ref{ex:form}) and $\mathrm{rank}(\Lambda)=n$. By considering direct products, one finds examples satisfying the other possible rank condition:

\begin{example}
  Let $V_i$ be a $n_i$-dimensional vector space, $i=1,2$, and let $V=V_1\times V_2$ and $n=n_1+n_2$.
  Consider the higher Poisson structures (of order $k$) on $V_1$ and $V_2$ defined by a non-degenerate form $\omega\in \wedge^{k+1}V_1^*$ and by $L=S=\wedge^k V^*_2$, respectively. Then their direct product defines a higher Poisson structure on $V$ satisfying  $\mathrm{rank}(\Lambda)=n_1 \leq n_1+n_2-k = n -k$.
\end{example}

%, thus clarifying

\begin{example}\label{ex:w12}
  Let $V_i$ be a vector space of dimension $n_i$, and let $\omega_i\in \wedge^{k_i+1}V_i^*$ be non-degenerate, $i=1,2$. If $V=V_1\times V_2$, we may regard $\omega_i\in \wedge^{k_i+1}V^*$ via pullback by the natural projections $V\to V_i$. Then
  $$
  L=\{X + (i_X\omega_1)\wedge \omega_2\,|\, X\in V_1\} \subset V + \wedge^{k_1+k_2+1} V^*
  $$
  is a weakly lagrangian subspace (see \cite{BCI}) such that $L\cap \wedge^{k_1+k_2+1} V^*=\{0\}$ and $L\cap V=\{0\}$. So $L$ defines
  a higher Poisson structure $(S,\Lambda)$ on $V$, with $\ker(\Lambda)=\{0\}$. Then:
  \begin{itemize}
  \item if $n_2\geq k_1+k_2+1$, then $L$ is standard, but $$
    \Ann(\Ima(\Lambda))=\wedge^{k_1+k_2+1}V_2^*\neq\{0\}=\ker(\Lambda).
    $$
  \item if $n_2< k_1+k_2+1$, then $L$ is not standard, but
    $$
    \Ann(\Ima(\Lambda))=\wedge^{k_1+k_2+1}V_2^*=\{0\}=\ker(\Lambda).
    $$
  \end{itemize}
  This shows that the conditions in Proposition~\ref{prop:HP} characterizing higher Poisson structures defined by lagrangian subbundles
  are independent.
\end{example}

%%%%%%%%%%%%%%%%%%%%%%%%%%%%%%%%%%%%%%%%%%%%%%%%%%%%%%%%%%%%%%%%%%%%%%%%%%%%%%%%%
\subsection{$B$-field transforms}
Any linear isomorphism $\phi: V\to V$ gives rise to a pairing-preserving automorphism $\phi + (\phi^{-1})^*: V+\wedge^k V^* \to V+\wedge^k V^*$ which preserves weakly lagrangian subspaces. In generalized geometry, one is often interested in another type of symmetry, that we now briefly discuss.

Any $B\in \wedge^2 V^*$ gives rise to a pairing-preserving automorphism of $V+ V^*$, referred to as a {\em gauge transformation} or {\em $B$-field transform} \cite{Se} (see also \cite{Gua,Hi}) via
$$
X+\alpha \mapsto e^B(X+\alpha)=X + \alpha + i_XB.
$$
It follows that for any lagrangian (resp. isotropic) subspace $L\subset V+ V^*$,
$$
e^B L = \{ X+\alpha+i_XB \st X+\alpha \in L \}
$$
is again lagrangian (resp. isotropic). Analogously, for $B\in \wedge^{k+1}V^*$, the same formula above defines a pairing-preserving
automorphism $e^B: V+ \wedge^kV^* \to V+ \wedge^kV^*$ preserving lagrangian (resp. isotropic) subspaces.
However, in contrast with lagrangian and isotropic subspaces, weakly lagrangian subspaces are {\em not} preserved by $B$-field transforms:

\begin{example}
  Let $V=V_1\times V_2$  and $L=\{X+ (i_X\w_1)\wedge \w_2|X\in V_1\} \subset V+ \wedge^k V^*$ be the  weakly lagrangian subspace considered in Example~\ref{ex:w12}. For $B=-\w_1\wedge \w_2 \in \wedge^2 V^*$ we get that $e^BL=V_1\times \{0\} \subseteq V$, which is not weakly lagrangian in $V+ \wedge^k V^*$ (as long as $V_2\neq \{0\}$).
\end{example}

Given $L\subset V+ \wedge^k V^*$ weakly lagrangian and $B\in \wedge^{k+1} V^*$, for $e^BL$ to be weakly lagrangian one must verify that $e^B L\cap V = (e^B L)^\perp \cap V$, which one can check to be equivalent to
the condition
\begin{equation}\label{eq:grB}
  L \cap \mathrm{gr}(-B) = L^\perp \cap \mathrm{gr}(-B),
\end{equation}
where $\mathrm{gr}(B)=\{X + i_XB\,|\, X\in V\}$.
We collect properties of $B$-field transforms of weakly lagrangian subspaces in the next result.

\begin{proposition}\
  \begin{itemize}
  \item[(a)] The $B$-transform of a weakly lagrangian subspace of the form $L \subset \wedge^k V^*$ is always weakly lagrangian.

  \item[(b)] The $B$-transform of a standard weakly lagrangian subspace $L\subset V+ \wedge^k V^*$ with $E\neq \{0\}$ is weakly lagrangian for every $B$ if and only if $L$ is lagrangian.

  \item[(c)] For $L\subset V+ \wedge^k V^*$ a non-standard weakly lagrangian subspace there always exists a $B$-transform such that $e^{B}L$ is not weakly lagrangian.
  \end{itemize}
\end{proposition}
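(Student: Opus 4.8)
The common engine for all three parts is the criterion \eqref{eq:grB}: by that equivalence, $e^BL$ is weakly lagrangian if and only if $L\cap\mathrm{gr}(-B)=L^\perp\cap\mathrm{gr}(-B)$, and since $L\subseteq L^\perp$ the inclusion $L\cap\mathrm{gr}(-B)\subseteq L^\perp\cap\mathrm{gr}(-B)$ is automatic. Thus in each case the task is to decide whether some element $X-i_XB\in L^\perp$ can fail to lie in $L$. I would also keep at hand the exactness of the contraction complex: for $X\neq 0$ one has $\ker(i_X\colon\wedge^kV^*\to\wedge^{k-1}V^*)=\mathrm{Im}(i_X\colon\wedge^{k+1}V^*\to\wedge^kV^*)$, so any $k$-form killed by $i_X$ equals $i_XB$ for some $B$. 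Part (a) is then immediate: if $L\subseteq\wedge^kV^*$ then $\pr_1(L)=\{0\}$, every element of $\mathrm{gr}(-B)$ with nonzero tangent part lies outside $\wedge^kV^*$, and since $L^\perp=\wedge^kV^*$ (as in Example~\ref{ex:lHP1}), both $L\cap\mathrm{gr}(-B)$ and $L^\perp\cap\mathrm{gr}(-B)$ collapse to $\{0\}$; hence $e^BL$ is weakly lagrangian for every $B$.

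For (b), the direction ``$L$ lagrangian $\Rightarrow e^BL$ weakly lagrangian'' is free, since $B$-transforms preserve lagrangian subspaces and every lagrangian is weakly lagrangian. For the converse I would argue by contraposition: assume $L=L(E,A_L,\varepsilon)$ is standard weakly lagrangian with $E\neq\{0\}$ but not lagrangian. By Lemma~\ref{lem:Lagrangian sbspace} this forces $A_L\subsetneq\Ann(E)$, so I may pick $\mu\in\Ann(E)\setminus A_L$, a nonzero $X_0\in E$, and $\beta_0$ with $X_0+\beta_0\in L$. Isotropy applied to $X_0+\beta_0$ with itself gives $i_{X_0}\beta_0=0$, while $\mu\in\Ann(E)$ gives $i_{X_0}\mu=0$; hence $i_{X_0}(\beta_0+\mu)=0$ and, by the exactness above, there is $B\in\wedge^{k+1}V^*$ with $i_{X_0}B=-(\beta_0+\mu)$. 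Then $X_0-i_{X_0}B=(X_0+\beta_0)+\mu$ lies in $L^\perp$, being the sum of $X_0+\beta_0\in L$ and $\mu\in\Ann(E)=L^\perp\cap\wedge^kV^*$, but it is not in $L$ because $\mu\notin A_L$. By \eqref{eq:grB}, $e^BL$ is not weakly lagrangian.

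For (c), I would use that a non-standard weakly lagrangian subspace is necessarily of the form $L=L(E,0,\varepsilon)$ with $\{0\}\neq E\subsetneq V$ and $\Ann(E)=\{0\}$. The plan is to produce $B$ and a vector $Z\notin E$ with $Z-i_ZB\in L^\perp$; since $\pr_1(L)=E$, such an element automatically avoids $L$, and \eqref{eq:grB} concludes. Fixing $Z\in V\setminus E$ and a decomposition $V=\langle Z\rangle\oplus U$ with $E\subseteq U$, orthogonality of $Z-i_ZB$ against every $Y+\varepsilon(Y)\in L$ unwinds to the requirement $i_Y(i_ZB)=i_Z\varepsilon(Y)$ for all $Y\in E$. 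Writing $\beta:=i_ZB$, I thus need a $k$-form $\beta$ with $i_Z\beta=0$—so that $\beta$ is realizable as $i_ZB$ by exactness—and $i_Y\beta=i_Z\varepsilon(Y)$ for $Y\in E$. The assignment $Y\mapsto i_Z\varepsilon(Y)$ takes values in $\wedge^{k-1}U^*$ (as $i_Z(i_Z\varepsilon(Y))=0$) and is $E$-skew, since $i_Xi_Z\varepsilon(Y)=-i_Z\,i_X\varepsilon(Y)$ is skew in $X,Y\in E$ by the $E$-skewness of $\varepsilon$. As the forms with $i_Z\beta=0$ are precisely $\wedge^kU^*$, applying the $E$-skew extension \eqref{eq:k-product-V-E-C} inside $\wedge^kU^*$ produces the desired $\beta$, which exactness then lifts to $B$.

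The routine parts are the algebraic identity \eqref{eq:grB} and the various contraction computations. I expect the genuine obstacle to be the existence step in (c): one must simultaneously meet the prescribed contractions $i_Y\beta=i_Z\varepsilon(Y)$ and the constraint $i_Z\beta=0$, with no slack coming from $\Ann(E)$ (which vanishes in the non-standard case). This is exactly why I would run the extension \eqref{eq:k-product-V-E-C} inside $\wedge^kU^*$ rather than in $\wedge^kV^*$: the constraint $i_Z\beta=0$ is then built in, and the compatibility required to apply the extension is precisely the $E$-skewness that $\varepsilon$ already enjoys.
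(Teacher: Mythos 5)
Your parts (a) and (b) are correct and essentially the paper's own arguments: (a) is verbatim the observation that $L^\perp=\wedge^k V^*$ collapses both $L\cap\mathrm{gr}(-B)$ and $L^\perp\cap\mathrm{gr}(-B)$ to $\{0\}$, and your contrapositive in (b) is the mirror image of the paper's direct argument, which takes $\alpha\in L^\perp\cap\wedge^k V^*$, notes that $Y+\beta+\alpha$ (with $Y+\beta\in L$, $Y\neq 0$) is null and hence lies in some $\mathrm{gr}(B)$, concludes $A_L=\Ann(E)$, and invokes Lemma~\ref{lem:Lagrangian sbspace} exactly as you do. Your only addition there is to make explicit the exactness $\ker(i_{X_0})=\Ima(i_{X_0})$ for $X_0\neq 0$, which the paper leaves implicit in the phrase ``it belongs to $\mathrm{gr}(B)$ for some $B$.''

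Part (c) is where you genuinely diverge, and your route is correct. The paper gauges away all of $\varepsilon$ at once: it extends the map $-\varepsilon:E\to\wedge^k V^*$ to some $B\in\wedge^{k+1}V^*$ and computes $e^BL=E$, which fails to be weakly lagrangian since $E\subsetneq V$ while $\pr_2(E)^\circ=V$. You instead produce a single witness: a vector $Z\notin E$ and a form $B$ with $Z-i_ZB\in (L^\perp\cap\mathrm{gr}(-B))\setminus L$, solving the constrained prescription $i_Y\beta=i_Z\varepsilon(Y)$, $i_Z\beta=0$ by running the extension identity \eqref{eq:k-product-V-E-C} for the pair $E\subseteq U$ (a legitimate instance, as that identity holds for any ambient space containing $E$) and lifting $\beta$ to $B$ by exactness; your checks that the prescription is $E$-skew and lands in $\wedge^{k-1}U^*$ are exactly the compatibility needed. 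The paper's argument is shorter and exhibits the transformed subspace explicitly; yours costs a small extension problem but has a genuine advantage: the paper justifies its extension by asserting that $\iota_E\circ\varepsilon=0$, which is not true in general for non-standard weakly lagrangian subspaces (e.g.\ $L=\mathrm{span}\{e_1+e^2\wedge e^3,\ e_2-e^1\wedge e^3\}\subset\RR^3+\wedge^2(\RR^3)^*$ is non-standard weakly lagrangian with $i_{e_2}\varepsilon(e_1)=e^3\neq 0$); what actually holds, and is all the degree-$(k+1)$ analogue of \eqref{eq:k-product-V-E-C} needs, is $E$-skewness. Your proof uses only that, so it silently repairs this inaccuracy in the paper's justification.
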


\begin{proof}
  For (a), note that for a weakly lagrangian $L\subset \wedge^k V^*$ we have $L^\perp = L^\circ +\wedge^k V^*$. Since $L^\perp\cap V = L^\circ=\{0\}$, we have $L^\perp = \wedge^k V^*$. Thus $L \cap \mathrm{gr}(B) = L^\perp \cap \mathrm{gr}(B)= \{0\}$ for all $B\in \wedge^{k+1} V^*$.

  To verify (b), consider a weakly lagrangian $L$ with non-trivial projection $E$ on $V$. Assume that $L^\perp\cap \mathrm{gr}(B)= L\cap \mathrm{gr}(B)$ for all $B$. We first note that, as a consequence, $L^\perp\cap \wedge^k V^* =  L \cap \wedge^k V^*$: indeed, given $\alpha \in L^\perp\cap \wedge^k V^*$, take $Y+\beta\in L\subset L^\perp$ with $Y\neq 0$; then the element $Y+\beta+\alpha$ is null, so it belongs to $\mathrm{gr}(B)$ for some $B$, and is hence in $L^\perp\cap \mathrm{gr}(B)=L\cap \mathrm{gr}(B)$. Thus, $\alpha \in L\cap \wedge^k V^*$. Using this fact and \eqref{eq:doubleAnn-standard}, we see that
  $$
  (L\cap \wedge^k V^*)^\circ=(L^\perp \cap \wedge^k V^*)^\circ = \Ann(E)^\circ= E,
  $$
  so $L$ is lagrangian as a result of Lemma~\ref{lem:Lagrangian sbspace}.

  For the claim in (c), we know that $L$ is of the form $L(E,0,\varepsilon)$ with $E\neq V$ and $\varepsilon:E\to \wedge^k V^*$ such that $\iota_E\circ \varepsilon=0$. Take any skew-symmetric extension $B\in \wedge^{k+1}V^*$ of $-\varepsilon$. We then have $e^{B}L=E$, which is not weakly lagrangian.
\end{proof}

%%%%%%%%%%%%%%%%%%%%%%%%%%%%%%%%%%%%%%%%%%%%%%%%%%%%%%%%%%%%%%%%%%%%%%%%%%%%%
\section{Integrability}\label{sec:involution}

% intro

On any manifold $M$,  the vector bundle $TM+ \wedge^k\TM$ carries a $\wedge^{k-1}T^*M$-valued symmetric pairing defined pointwise by  \eqref{eq:pairing}. The notions of isotropic, lagrangian or weakly lagrangian subbundle of  $TM+ \wedge^k\TM$ are also defined pointwise.

Just as usual Dirac structures involve an integrability condition with respect to the Courant bracket \cite{Co}, we will consider the higher-order analogue of the Courant-Dorfman bracket on
$\Gamma(TM+\wedge^k \TM)$ given by
\begin{equation*}%\label{eq:k-courantbck}
  \ca X+\alpha,Y+\beta \cc=[X,Y]+\Lie_X\beta -i_Yd\alpha.
\end{equation*}
An isotropic subbundle $L\subset TM+\wedge^k T^*M$ is said to be {\em integrable}  if $\Gamma(L)$ is involutive with respect to this bracket. In this case, the vector bundle $L\to M$ inherits a Lie algebroid structure, with anchor given by the projection $\pr_1|_L: L\to TM$, and Lie bracket on $\Gamma(L)$ given by the restriction of $\ca\cdot,\cdot \cc$. It follows that $M$ has a singular foliation by ``orbits'' of this Lie algebroid, defined by integral leaves of $\pr_1(L)\subseteq TM$.

For ordinary Dirac structures, it is well known that each leaf carries a presymplectic structure, and that the resulting presymplectic foliation completely determines the Dirac structure. In this section, we will present an extension of this result for arbitrary integrable, isotropic subbundles $L\subset TM+\wedge^k T^*M$. We characterize their leafwise geometry and apply the results to higher Dirac structures.

Another feature resulting from the integrability of Dirac structures is the presence of a natural Poisson algebra of ``admissible functions'' on any Dirac manifold. This was extended to the context of isotropic, involutive subbundles of $TM+\wedge^k T^*M$ in \cite{Za}, where ``admissible functions'' are shown to define higher Lie algebras, as in \cite{CR} for multisymplectic structures.

%\begin{remark}
%A different perspective \cite{CR,Za},  motivated by mechanics, is the association, to any isotropic involutive subbundle of $TM+\wedge^k T^*M$, of a higher Lie algebra of observables. % This was introduced in  \cite{CR} for multisymplectic forms, and developed in \cite{Za} in general.
%A first observation  about isotropic, involutive subbundles of $TM+\wedge^k T^*M$  is the association, of a higher Lie algebra of observables.
%\end{remark}

\subsection{An equivalent viewpoint to involutivity}
In order to motivate the upcoming definitions, let  $L\subset TM+ \wedge^k\TM$ be an isotropic subbundle.
We keep the notation $E=\pr_1(L)\subseteq TM$ and $A_L=L\cap\wedge^k T^*M$, which are now general distributions (not necessarily of constant rank). Assuming that $L$ is integrable, a first consequence is the integrability of $E$. So let $\mathcal{O}\hookrightarrow M$ be a leaf integrating $E$, i.e., $E|_{\mathcal{O}}=T\mathcal{O}$. The restriction $A_L|_{\mathcal{O}} $ has constant rank, so  it is a subbundle of $\wedge^k T^*M|_{\mathcal{O}}$. We denote by
\begin{equation}\label{eq:FO}
  F_{\mathcal{O}}:= \frac{\wedge^k T^*M|_{\mathcal{O}}}{A_L|_\mathcal{O}} \to \mathcal{O}
\end{equation}
the resulting quotient bundle. Using  \eqref{eq:def-epsilon} pointwise, we obtain a section
\begin{equation}\label{eq:epsO}
  \varepsilon_{\mathcal{O}} \in \Gamma(T^*\mathcal{O} \otimes F_\mathcal{O}),
\end{equation}
with the additional property that $\iota_{T\mathcal{O}}\circ \varepsilon_{\mathcal{O}}\in \Gamma(\wedge^2 T^*\mathcal{O}\otimes \wedge^{k-1}\TM|_{\mathcal{O}} )$, where
\begin{equation}
  \iota_{T\mathcal{O}}: F_\mathcal{O} \to T^*\mathcal{O}\otimes \wedge^{k-1}T^*M|_{\mathcal{O}}\label{eq:iota-TO}
\end{equation}
is defined pointwise as in \eqref{eq:iotaE}.

Recall that, although the exterior differential $d$ is not well-defined on $\Gamma(\wedge^\bullet T^*M|_{\leaf})$, for $X\in \Gamma(T\leaf)$ we do have operators
\begin{equation}
  \Lie_X, i_Xd, di_X : \Gamma(\wedge^\bullet T^*M|_{\leaf})\to \Gamma(\wedge^\bullet T^*M|_{\leaf}),\label{eq:operators-in-forms}
\end{equation}
given by $\Lie_X(\alpha)=(\Lie_{\widetilde{X}}\widetilde{\alpha})|_{\leaf}$, $i_Xd(\alpha)= (i_{\widetilde{X}}d\widetilde{\alpha})|_{\leaf}$ and $di_X(\alpha)=(di_{\widetilde{X}}\widetilde{\alpha})|_{\leaf}$, where $\widetilde{X} \in \Gamma(TM)$ and $\widetilde{\alpha}\in \Gamma(\wedge^\bullet T^*M)$ are any extensions of $X$ and $\alpha$.

 A second consequence of the integrability of $L$ is that, for $X\in \Gamma(T\leaf)$, we have restrictions of the maps in (\ref{eq:operators-in-forms}) to
\begin{equation*}\label{eq:mod}
  \Lie_X,i_Xd,di_X:\Gamma(A_L|_{\leaf})\to \Gamma(A_L|_{\leaf}).
\end{equation*}
Indeed, for extensions $\widetilde{\alpha} \in \Gamma(A_L)$ and $\widetilde{X}+\widetilde{\gamma} \in \Gamma(L)$,
\begin{equation*}\label{eq:welldef}
  \ca \widetilde{X}+\widetilde{\gamma}, 0+\widetilde{\alpha} \cc|_\leaf = (\Lie_{\widetilde{X}}\widetilde{\alpha})|_{\leaf} = (i_{\widetilde{X}}d\widetilde{\alpha})|_{\leaf}   \in \Gamma(A_L|_\leaf),
\end{equation*}
whereas $di_X$ sends $\Gamma(A_L|_{\leaf})$ to zero.  As a result, the operators in \eqref{eq:operators-in-forms} descend to %operators
\begin{equation}\label{eq:iX}
  \Lie_X,i_Xd,di_X: \Gamma(F_{\mathcal{O}})\to \Gamma(F_{\mathcal{O}}).
\end{equation}

\begin{remark}\label{rem:cartanid}
  It is clear that the usual identities involving $\Lie_X$, $i_Xd$ and $di_X$
  hold for operators on $\Gamma(\wedge^\bullet T^*M|_{\leaf})$ and $\Gamma(F_\mathcal{O})$: for example, for $X, Y\in \Gamma(T\mathcal{O})$,
  \begin{equation}\label{eq:identities}
    \Lie_X=di_X+i_Xd,\;\;\; i_{[X,Y]}d=\Lie_X i_Yd - i_Yd \Lie_X,\;\;\; di_{[X,Y]}= \Lie_X di_Y - di_Y \Lie_X.
  \end{equation}
\end{remark}

As a third consequence of the integrability of $L$, by using the description of Proposition \ref{prop:isotropic subsp}, we have the following identity in $\Gamma(F_\leaf)$. For all $X,Y\in \Gamma(T\mathcal{O})$,
\begin{equation*}%\label{eq:cocycle}
  \varepsilon_{\mathcal{O}}([X,Y])=\Lie_X (\varepsilon_{\mathcal{O}}(Y)) - i_Y d (\varepsilon_{\mathcal{O}}(X)).
\end{equation*}

It follows from Proposition~\ref{prop:isotropic subsp} that, at each point of $\mathcal{O}$, the subbundle $L$ can be reconstructed from $E|_{\leaf}$, $A_{L}|_\leaf$ and $\varepsilon_{\mathcal{O}}$.  In fact, one can now verify that the integrability condition for $L$ is equivalent to the three conditions we have described:

\begin{theorem}\label{thm:integrability}
  An isotropic subbundle $L$ is integrable if and only if
  \begin{itemize}
  \item[(a)] the distribution $E=\pr_1(L)$ is integrable,
  \item[(b)] for each leaf $\leaf$, the space $\Gamma(A_L|_\leaf)$ is invariant by $\Lie_X$ for $X\in \Gamma(T\leaf)$,
  \item[(c)] for each leaf $\mathcal{O}$, we have, for all $X,Y\in \Gamma(T\mathcal{O})$,
    \begin{equation}\label{eq:cocycle}
      \Lie_X (\varepsilon_{\mathcal{O}}(Y)) - i_Y d (\varepsilon_{\mathcal{O}}(X)) - \varepsilon_{\mathcal{O}}([X,Y])=0\in \Gamma(F_\leaf).
    \end{equation}
  \end{itemize}
\end{theorem}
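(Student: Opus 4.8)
The plan is to prove the two implications separately. The forward implication is essentially already contained in the three ``consequences of integrability'' derived before the statement, so I would spend most of the effort on the converse and then come back to tie up the one remaining gap in the forward direction.

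For the converse I would assume (a)--(c) and show directly that $\Gamma(L)$ is closed under the Courant--Dorfman bracket. Take $X+\alpha,Y+\beta\in\Gamma(L)$; along any leaf $\leaf$ one has $\overline{\alpha}=\varepsilon_\leaf(X)$ and $\overline{\beta}=\varepsilon_\leaf(Y)$ in $\Gamma(F_\leaf)$, where $\overline{(\cdot)}$ is the projection onto $F_\leaf$, and by Proposition~\ref{prop:isotropic subsp} a pair $Z+\gamma$ lies in $L$ at a point of $\leaf$ exactly when $Z\in T\leaf$ and $\overline{\gamma}=\varepsilon_\leaf(Z)$. Applying $\pr_1$ to the bracket gives $[X,Y]$, which is tangent to $\leaf$ by (a). For the form part $\Lie_X\beta-i_Yd\alpha$, the key point is that (b) makes $\Lie_X$ and $i_Yd$ descend to $F_\leaf$ as in \eqref{eq:iX} --- note $di_X$ annihilates $A_L|_\leaf$ since $A_L\subseteq\Ann(E)$, so $i_Xd=\Lie_X-di_X$ preserves $A_L|_\leaf$ as soon as $\Lie_X$ does. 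Using that the leafwise operators of \eqref{eq:operators-in-forms} satisfy $(\Lie_X\beta)|_\leaf=\Lie_X(\beta|_\leaf)$ and $(i_Yd\alpha)|_\leaf=i_Yd(\alpha|_\leaf)$ for fields tangent to $\leaf$, I would project to $F_\leaf$ to obtain $\Lie_X\varepsilon_\leaf(Y)-i_Yd\,\varepsilon_\leaf(X)$, which by the cocycle identity (c) equals $\varepsilon_\leaf([X,Y])$. Hence the bracket meets the membership criterion at each point of each leaf, and since the leaves cover $M$ it lies in $\Gamma(L)$.

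For the forward implication, (a) follows from $\pr_1\Cour{\sigma,\tau}=[\pr_1\sigma,\pr_1\tau]$, so the fields $\pr_1\sigma$ ($\sigma\in\Gamma(L)$) span a bracket-closed, hence Stefan--Sussmann integrable, distribution; (b) follows on generators $X=\pr_1\sigma$ from $\Cour{\sigma,0+\tilde\alpha}=\Lie_{\tilde X}\tilde\alpha\in\Gamma(A_L)$, extended to all tangent fields via $\Lie_{fX}\mu=f\Lie_X\mu+df\wedge i_X\mu$ together with $i_X\mu=0$ for $\mu\in A_L\subseteq\Ann(E)$; and the computation of the converse, read backwards, yields (c) for $X,Y$ of the form $\pr_1\sigma$. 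The main obstacle is upgrading (c) from these generators (which do span $T\leaf$) to \emph{all} $X,Y\in\Gamma(T\leaf)$. I would handle this by checking that $\Phi(X,Y):=\Lie_X\varepsilon_\leaf(Y)-i_Yd\,\varepsilon_\leaf(X)-\varepsilon_\leaf([X,Y])$ is $C^\infty(\leaf)$-bilinear: linearity in $Y$ is immediate, while in $X$ the Leibniz corrections collapse to $df\wedge\big(i_X\varepsilon_\leaf(Y)+i_Y\varepsilon_\leaf(X)\big)$, which vanishes precisely because $\varepsilon_\leaf$ is $E$-skew (Definition~\ref{def:E-skew}), i.e. $i_X\varepsilon_\leaf(Y)=-i_Y\varepsilon_\leaf(X)$. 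Tensoriality then propagates the vanishing of $\Phi$ from generators to all tangent fields, finishing the proof.
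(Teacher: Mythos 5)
Your proof is correct and follows essentially the same route as the paper: the forward implication is exactly the content of the discussion preceding the theorem, and your converse is the paper's leafwise verification that the Courant--Dorfman bracket of two sections of $L$ satisfies the membership criterion of Proposition~\ref{prop:isotropic subsp} along each leaf, with (b) ensuring the operators $\Lie_X$, $i_Xd$ descend to $\Gamma(F_\leaf)$. Your only addition is the tensoriality argument (via $E$-skewness of $\varepsilon_\leaf$, and $i_X\mu=0$ for $\mu\in A_L\subseteq\Ann(E)$) upgrading (b) and (c) from fields of the form $\pr_1\sigma$ to arbitrary $X,Y\in\Gamma(T\leaf)$ --- a detail the paper's terser proof leaves implicit, and a worthwhile one to make explicit.
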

% Note that, in the last condition, zero means the zero section of $\Gamma(F_\leaf)$.
\begin{proof}
  We have already seen that the integrability of $L$ implies (a), (b) and (c). Conversely, given (a), the bracket of two sections of $L$ on a point $x$ in a leaf $\leaf$ only depends on the value along  $\leaf$. By (b), \eqref{eq:cocycle} is well defined and the integrability of $L$ is a consequence of (a) and (c).
\end{proof}

Note that from (a), we always have, for $X\in \Gamma(T\leaf)$,
\begin{equation}
  \Lie_X(\Gamma(\Ann(T\leaf)))\subseteq \Gamma(\Ann(T\leaf)),\label{eq:lieX-annE}
\end{equation}
so condition (b) is automatically satisfied when $A_L=\Ann(E)$.

The following are examples of integrable isotropic subbundles with $\varepsilon_\leaf=0$.

\begin{example}
  Consider an isotropic subbundle of the form $L=E+ \mathrm{Ann}(E)$ for a subbundle $E\subseteq TM$ (cf. Example~\ref{ex:E}). Condition (a) means that $E$ is integrable, whereas condition (b) follows from (a), and (c) is trivially satisfied. Therefore, $L$ is integrable if and only if $E$ is integrable.
\end{example}

\begin{example}
  In the case of a subbundle $S\subseteq \wedge^kT^*M$ (cf. Example~\ref{ex:lHP1}), consider the isotropic subbundle $L=S^\circ+S$. Condition (b) amounts to $\Lie_X \alpha\in \Gamma(S)$ for all $X\in \Gamma(S^\circ)$ and $\alpha\in \Gamma(S)$. Condition (a) then follows from (b), and (c) is trivially satisfied.
\end{example}

Our next goal is to identify the differential complex with respect to which \eqref{eq:cocycle} is a cocycle condition.

%%%%%%%%%%%%%%%%%%%%%%%%%%%%%%%%%%%%%%%%%%%%%%%%%%%%%%%%%%%%%%%%%%%%%%%%%%%%%%%%%%%%%%%%%%%%%
\subsection{Differential complex on leaves}\label{sec:diffcomplex}

Let $L\subset TM+ \wedge^k\TM$ be an isotropic subbundle. Throughout this section, we will suppose that
\begin{itemize}
\item the distribution $E\subseteq TM$ is integrable,
\item for each leaf $\leaf$, the space $\Gamma(A_L|_{\leaf})$ is invariant by $\Lie_X$ for $X\in \Gamma(T\leaf)$.
\end{itemize}
As previously observed, both conditions hold whenever $L$ is integrable. Let
$\mathcal{O}\hookrightarrow M$ be a leaf of $E$, and
let $F_{\mathcal{O}} \to \mathcal{O}$ be as in \eqref{eq:FO}. The second assumption above guarantees that the operators in \eqref{eq:iX} are well defined.

Denote by $\Omega^p(\leaf,F_{\mathcal{O}})=\Gamma(\wedge^pT^*\mathcal{O}\otimes F_\mathcal{O})$ the space of $p$-forms on $\mathcal{O}$ with values in $F_\mathcal{O}$. We say that $\eta \in \Omega^p(\leaf,F_{\mathcal{O}})$ is {\em $T\mathcal{O}$-skew} if
\begin{equation}\label{eq:TOskew}
  \iota_{T\leaf}\circ \eta \in \Gamma(\wedge^{p+1} T^*\leaf \otimes \wedge^{k-1} T^*M|_{\mathcal{O}}),
\end{equation}
where $\iota_{T\leaf}$ is defined in \eqref{eq:iota-TO}, and denote by
$$
\Osk{p} \subseteq \Omega^p(\mathcal{O},F_{\mathcal{O}})
$$
the subspace of $T\mathcal{O}$-skew $p$-forms. In what follows, we will show that, for each $p$, there is a natural operator
\begin{equation*}\label{eq:delta}
  \delta: \Osk{p} \to \Osk{p+1}
\end{equation*}
making $\Oskb$ into a differential complex. As a warm up, we see what $\delta$ looks like in degrees $0$ and $1$.

For $p=0$, let $\delta: \Gamma(F_\mathcal{O})=\Osk{0} \to \Gamma(T^*\mathcal{O}\otimes F_\mathcal{O})$ be given by
$$
\delta \theta (X) := i_Xd \theta,
$$
for $X\in \Gamma(T\mathcal{O})$ (cf. \eqref{eq:iX}). It is a direct verification that $\delta\theta \in \Osk{1}$, hence
$$
\delta: \Osk{0} \to \Osk{1}.
$$

For $p=1$, we have the following results.

\begin{proposition}\label{prop:delta-p=1}
  For $\theta \in \Gamma(T^*\mathcal{O}\otimes F_{\mathcal{O}})$, the expression
  (cf. \eqref{eq:cocycle})
  $$
  \delta\theta(X,Y) =  \Lie_X (\theta(Y)) - i_Y d (\theta(X)) -  \theta([X,Y])
  $$
  defines an operator
  $\delta: \Osk{1} \to \Osk{2},$
  such that $\delta\circ \delta:  \Osk{0} \to \Osk{2}$ vanishes.
\end{proposition}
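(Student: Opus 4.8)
The plan is to verify three things for the degree-one formula—that $\delta\theta$ is tensorial (i.e. $C^\infty(\mathcal{O})$-bilinear), that it is skew-symmetric in $X,Y$, and that it is $T\mathcal{O}$-skew, so that it indeed lands in $\Osk{2}$—and then separately to establish $\delta\circ\delta=0$. Throughout I would work with the operators $\Lie_X,i_Xd,di_X$ on $\Gamma(F_\mathcal{O})$ from \eqref{eq:iX} and freely use the Cartan-type identities of Remark~\ref{rem:cartanid}, together with the already available fact that $\delta\colon\Osk{0}\to\Osk{1}$ is well defined. The recurring mechanism is that the hypothesis $\theta\in\Osk{1}$, i.e. $i_X\theta(Y)=-i_Y\theta(X)$ in $\wedge^{k-1}T^*M|_\mathcal{O}$, is exactly what makes the unwanted terms cancel.

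For tensoriality I would replace $X$ by $fX$ and expand. Using $i_{fX}d=f\,i_Xd$ and the Leibniz rules $\Lie_{fX}\eta=f\Lie_X\eta+df\wedge i_X\eta$ and $i_Yd(f\eta)=(Yf)\eta-df\wedge i_Y\eta+f\,i_Yd\eta$ on $\Gamma(F_\mathcal{O})$, together with $\theta([fX,Y])=f\theta([X,Y])-(Yf)\theta(X)$, all the derivative-of-$f$ contributions collect into
\[
\delta\theta(fX,Y)=f\,\delta\theta(X,Y)+df\wedge\big(i_X\theta(Y)+i_Y\theta(X)\big).
\]
The last term vanishes precisely because $\theta$ is $T\mathcal{O}$-skew, giving $C^\infty$-linearity in the first slot; linearity in the second slot then follows once skew-symmetry is in hand.

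The main obstacle is the pair of conditions ensuring $\delta\theta\in\Osk{2}$: skew-symmetry and $T\mathcal{O}$-skewness. For skew-symmetry I would add $\delta\theta(X,Y)$ and $\delta\theta(Y,X)$; the bracket terms cancel, and the first Cartan identity $\Lie_X=di_X+i_Xd$ collapses the rest to $di_X\theta(Y)+di_Y\theta(X)$. This symmetric part must be shown to vanish, which is where the identity $i_X\theta(Y)=-i_Y\theta(X)$ is used once more, via the description of $di_X$ on the quotient bundle $F_\mathcal{O}$ recorded in the setup of this section. For $T\mathcal{O}$-skewness I would compute $\iota_{T\mathcal{O}}\circ\delta\theta$, i.e. contract $\delta\theta(X,Y)$ with a third field $Z\in\Gamma(T\mathcal{O})$, and check that the result is totally antisymmetric in $X,Y,Z$, so that $\iota_{T\mathcal{O}}\circ\delta\theta\in\Gamma(\wedge^3 T^*\mathcal{O}\otimes\wedge^{k-1}T^*M|_\mathcal{O})$; here the identities of Remark~\ref{rem:cartanid} relating $i_{[X,Y]}d$, $\Lie_X$ and $i_Yd$ are the bookkeeping tool, while the $T\mathcal{O}$-skewness of $\theta$ supplies the needed antisymmetry in the two slots that come from $\theta$ itself.

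Finally, $\delta\circ\delta=0$ is the cleanest step. Taking $\theta\in\Osk{0}$, so $(\delta\theta)(X)=i_Xd\theta$, I would substitute into the degree-one formula and use the Cartan identity $i_{[X,Y]}d=\Lie_X i_Yd-i_Yd\,\Lie_X$ to obtain
\[
\delta(\delta\theta)(X,Y)=i_Yd\big(\Lie_X\theta-i_Xd\theta\big)=i_Yd\,di_X\theta,
\]
which vanishes since $d\,di_X\theta$ is represented by $d\,d\,i_{\widetilde X}\widetilde\theta|_\mathcal{O}=0$. I expect the skew-symmetry and $T\mathcal{O}$-skewness verifications to be the delicate part, both because that is where the hypothesis $\theta\in\Osk{1}$ is genuinely needed and because it requires care with the operator $di_X$ on $F_\mathcal{O}$; the tensoriality and $\delta\circ\delta=0$ computations are essentially forced by the Cartan calculus.
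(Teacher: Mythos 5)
Your proposal is correct and follows essentially the same route as the paper's proof: the same cancellation $\delta\theta(X,Y)+\delta\theta(Y,X)=di_X(\theta(Y))+di_Y(\theta(X))=0$ using $T\mathcal{O}$-skewness of $\theta$, the same contraction-with-$Z$ verification that $\delta\theta$ is $T\mathcal{O}$-skew, and the same reduction of $\delta(\delta\theta)(X,Y)$ to $i_Yd\,di_X\theta=0$ via \eqref{eq:identities}, with your explicit tensoriality check simply left implicit in the paper. One minor caveat: in the skewness step the identity you actually need is the mixed commutator $[\Lie_X,i_Z]=i_{[X,Z]}$ viewed as operators $\Gamma(F_\mathcal{O})\to\Gamma(\wedge^{k-1}T^*M|_{\mathcal{O}})$ (which the paper isolates as a separate identity, \eqref{eq:identity-Lie}, inside its proof), rather than the endomorphism identities of Remark~\ref{rem:cartanid} that you cite, but it holds for the same reasons and your outlined computation goes through verbatim.
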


\begin{proof}
  First, $\delta\theta$ defines an element in $\Gamma(\wedge^2 T^*\mathcal{O}\otimes F_\mathcal{O})$ since, for $X,Y\in \Gamma(T\mathcal{O})$,
  $$
  \delta\theta(X,Y)+\delta\theta(Y,X)=di_X (\theta(Y))+di_Y(\theta(X))=0.
  $$
  On the other hand, for $X,Y\in \Gamma(T\mathcal{O})$, we have the identity
  \begin{equation}\label{eq:identity-Lie}
    [\Lie_X,i_Y]=[i_X,\Lie_Y]=i_{[X,Y]}
  \end{equation}
  of operators $\Gamma(F_{\mathcal{O}})\to \Gamma(\wedge^{k-1}T^*M|_{\mathcal{O}})$.
  Using this identity and that $\theta$ is $T\mathcal{O}$-skew, we have
  \begin{align*}
    i_Z(\delta \theta(X,Y)) & {} = i_Z\Lie_X\theta(Y) - i_Zi_Yd\theta(X)-i_Z\theta([X,Y]) \\
    & {} = i_{[Z,X]} \theta(Y) + \Lie_X i_Z\theta(Y) +i_Yi_Zd\theta(X)+i_{[X,Y]}\theta(Z)\\
    & {} = i_Y\theta([X,Z])+i_Yi_Zd\theta(X) - i_Y\Lie_X\theta(Z) = -(\delta\theta)(X,Z)(Y),
  \end{align*}
  i.e., $\delta\theta$ is in $\Osk{2}$. Finally, for $\theta\in \Osk{0}$,
  $$\delta(\delta \theta)(X,Y) = \Lie_X(i_Y d\theta)-i_Yd(i_Xd\theta)-i_{[X,Y]}d\theta,$$
  which vanishes by \eqref{eq:identities}.
\end{proof}

We will extend the last proposition to higher degrees.
Note that $\delta$ can be written, for $p=0,1$ and  $\theta_p\in \Osk{p}$, as
\begin{align*}
  \delta\theta_0(X) & {} =\Lie_X \theta_0 - di_X\theta_0,\\
  \delta\theta_1(X,Y) & {} = \Lie_X
  \theta_1(Y)-\Lie_Y\theta_1(X)-\theta_1([X,Y]) + di_Y\theta_1(X).
\end{align*}
These expressions and the usual Koszul formula suggest the following definition.

\begin{definition}\label{def:delta}
  For $\theta\in \Osk{p}$, define $\delta\theta \in
  \Gamma(\wedge^{p+1}T^*\leaf \otimes F_\leaf)$ by
   \begin{align}\label{eq:def-delta}
  \delta\theta(X_0,\ldots, X_p) =  {} &  \sum_{j\leq p} (-1)^j \Lie_{X_j} \theta(X_0, \ldots, \widehat{X}_j, \ldots, X_p) \nonumber \\ & + \sum_{j<l} (-1)^{j+l}\theta([X_j,X_l], \ldots,  \widehat{X}_j,\ldots,\widehat{X}_l, \ldots, X_p)   \\
  & -(-1)^p d i_{X_p} \theta(X_0, \ldots, X_{p-1}).\nonumber
  \end{align}
\end{definition}
Although similar to the usual Koszul formula, this definition strongly depends on the existence of the operators $\Lie_X$, $di_X$ on $\Gamma(F_\leaf)$ defined in \eqref{eq:iX}. Note that the last term, which would vanish in the formula for the usual exterior derivate, does not vanish in our case.

Let us verify that the image of $\delta$ is $T\leaf$-skew.

\begin{lemma}
  For $\theta\in\Osk{p}$, we have that $\delta\theta\in\Osk{p+1}$.
\end{lemma}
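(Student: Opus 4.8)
The plan is to verify directly that the $(p+2)$-form-valued object $\iota_{T\leaf}\circ\delta\theta$ takes values in $\wedge^{k-1}T^*M|_\leaf$, i.e.\ that $\delta\theta$ is $T\leaf$-skew in the sense of \eqref{eq:TOskew}. Concretely, I would fix vector fields $X_0,\ldots,X_p,Z\in\Gamma(T\leaf)$ and compute $i_Z\bigl(\delta\theta(X_0,\ldots,X_p)\bigr)\in\Gamma(\wedge^{k-1}T^*M|_\leaf)$, aiming to show that this expression is totally skew-symmetric in the arguments $Z,X_0,\ldots,X_p$. Since $\delta\theta$ is already skew in $X_0,\ldots,X_p$ by construction (one checks this exactly as in the symmetry computation at the start of the proof of Proposition~\ref{prop:delta-p=1}, using that the last term is of the form $di_{X_p}(\cdots)$), it suffices to show antisymmetry under the single transposition swapping $Z$ with $X_p$, the last slot; full skewness then follows.

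The key computational tool is the commutation identity \eqref{eq:identity-Lie}, namely $[\Lie_X,i_Y]=[i_X,\Lie_Y]=i_{[X,Y]}$ as operators $\Gamma(F_\leaf)\to\Gamma(\wedge^{k-1}T^*M|_\leaf)$, together with the hypothesis that $\theta$ itself is $T\leaf$-skew. I would apply $i_Z$ termwise to \eqref{eq:def-delta}. For the $\Lie_{X_j}$-terms I commute $i_Z$ past $\Lie_{X_j}$ using \eqref{eq:identity-Lie}, producing an $i_{[Z,X_j]}$-contribution plus a $\Lie_{X_j}i_Z$-contribution; the $i_Z$ inside then lands on $\theta$, where $T\leaf$-skewness of $\theta$ lets me move the $Z$-slot around. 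For the bracket-terms $i_Z$ simply slides in to act on $\theta$, again invoking $T\leaf$-skewness. The final term is handled by the Cartan-type relation $i_Zdi_{X_p}=\Lie_Z i_{X_p}-di_Zi_{X_p}$ (from \eqref{eq:identities}), which converts it into pieces matching the structure of the others. The whole point of the calculation is that the $di_Z(\cdots)$ pieces either cancel in pairs or reassemble into the $X_p\leftrightarrow Z$-swapped copy of the entire expression, exactly as in the degree-one model computation displayed in the proof of Proposition~\ref{prop:delta-p=1}.

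The main obstacle I anticipate is purely bookkeeping: tracking the signs $(-1)^j$, $(-1)^{j+l}$ and $(-1)^p$ through the commutations, and correctly matching up the $i_{[Z,X_j]}\theta(\cdots)$ terms arising from the $\Lie$-terms with the bracket-terms $\theta([X_j,X_l],\ldots)$ after inserting $Z$ into the argument list. The cleanest way to manage this is to mirror the three-line display in the proof of Proposition~\ref{prop:delta-p=1} slot by slot, treating the last slot $X_p$ asymmetrically (since it carries the $di_{X_p}$ term) and verifying that $i_Z\bigl(\delta\theta(X_0,\ldots,X_p)\bigr) = -\,i_{X_p}\bigl(\delta\theta(X_0,\ldots,X_{p-1},Z)\bigr)$. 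I do not expect any genuinely new idea beyond the $p=1$ case; the result is the natural inductive extension of the $T\leaf$-skewness already established there, and the verification is a finite Cartan-calculus identity whose only subtlety is combinatorial care with indices.
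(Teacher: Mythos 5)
Your proposal is correct and takes essentially the same approach as the paper: it reduces $T\leaf$-skewness to the single transposition identity $i_Z\bigl(\delta\theta(X_0,\ldots,X_p)\bigr)=-i_{X_p}\bigl(\delta\theta(X_0,\ldots,X_{p-1},Z)\bigr)$ and verifies it by commuting $i_Z$ through the Koszul-type terms via \eqref{eq:identity-Lie}, the Cartan identities \eqref{eq:identities}, and the $T\leaf$-skewness of $\theta$, with the $i_{[X_j,X_p]}$-contributions absorbed by the bracket terms. The only cosmetic difference is that the paper processes the $\Lie_{X_p}$- and $di_{X_p}$-terms together through $i_Z\Lie_{X_p}-i_Zdi_{X_p}=i_Zi_{X_p}d=-i_{X_p}\Lie_Z+i_{X_p}di_Z$, while you split off the last term with $i_Zdi_{X_p}=\Lie_Zi_{X_p}-di_Zi_{X_p}$; these are equivalent manipulations.
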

\begin{proof}
  It is enough to prove that, for $Z\in \Gamma(T\leaf)$,
  $$
  i_Z\delta\theta(X_0,\ldots, X_{p-1},X_p)=-i_{X_p} \delta\theta(X_0,\ldots, X_{p-1},Z).
  $$
  First, we use the following identities on $\Gamma(F_\leaf)$. On $\theta(X_1,\ldots, X_{p})$ we apply
  \begin{align*}
    i_Z \Lie_{X_p} - i_Z di_{X_p} & {} = i_Z i_{X_p} d= -i_{X_p}i_Z d = - i_{X_p} \Lie_{Z} + i_{X_p} di_{Z},
  \end{align*}
  and on $\theta(X_1,\ldots, \widehat{X}_j,\ldots, X_{p-1})$, for $j<p$, we apply
  \begin{align*}
    i_Z\Lie_{X_j}i_{X_p} & {} = \Lie_{X_j}i_Zi_{X_p} - i_{[X_j,Z]}i_{X_p}= -\Lie_{X_j}i_{X_p}i_Z  - i_{[X_j,Z]}i_{X_p}\\
    & {}  = -i_{X_p}\Lie_{X_j}i_Z - i_{[X_j,X_p]}i_Z - i_{[X_j,Z]}i_{X_p}.
  \end{align*}
  The result then follows from
  $$
  (-1)^{j+p} i_Z\theta([X_j,X_p],X_0, \ldots,  \widehat{X}_j, \ldots, X_{p-1}) = (-1)^j i_{[X_j,X_p]}i_Z\theta(X_0,\ldots,  \widehat{X}_j,\ldots, X_{p-1}).
  $$

\end{proof}

The only difference between \eqref{eq:def-delta} and the expression for the usual exterior derivative is the last term, which we describe as follows.

\begin{lemma}\label{lemma:ddeltaomega}
  For $\theta\in \Osk{p-1}$ and $X_0,\ldots,X_p\in \Gamma(T\leaf)$,
 \begin{align*}
di_{X_p}(\delta\theta(X_0,\ldots, X_{p-1})) =  {} & \sum_{j<p} (-1)^j
\Lie_{X_j} di_{X_p}\theta(X_0,\ldots,\widehat{X}_j,\ldots, X_{p-1})\\ & +
(-1)^p \Lie_{X_p}
di_{X_{p-1}}\theta(X_0,\ldots, X_{p-2})\\ & +
\sum_{j<l} (-1)^{j+l}(-1)^{p-1} d i_{[X_j,X_l]} \theta(X_0,\ldots,\widehat{X}_j,\ldots,\widehat{X}_l,\ldots,X_p).
\end{align*}
\end{lemma}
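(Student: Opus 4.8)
The plan is to establish the identity by a direct computation: expand $\delta\theta(X_0,\ldots,X_{p-1})$ via the Koszul-type formula \eqref{eq:def-delta} (with $p$ replaced by $p-1$), apply the operator $di_{X_p}$ term by term, and reassemble the result using the Cartan-type relations collected in Remark~\ref{rem:cartanid}, the vanishing $i_Xd\circ di_Y=0$, and the $T\leaf$-skewness of $\theta$. For $\theta\in\Osk{p-1}$, formula \eqref{eq:def-delta} writes $\delta\theta(X_0,\ldots,X_{p-1})$ as a sum of Lie-derivative terms $(-1)^j\Lie_{X_j}\theta(\ldots,\widehat{X}_j,\ldots)$, a sum of bracket terms $(-1)^{j+l}\theta([X_j,X_l],\ldots)$ with $j<l\leq p-1$, and a single term $-(-1)^{p-1}di_{X_{p-1}}\theta(X_0,\ldots,X_{p-2})$, and I will apply $di_{X_p}$ to each of these three groups.

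First I would treat the Lie-derivative terms. Commuting $di_{X_p}$ past each $\Lie_{X_j}$ through the third identity in \eqref{eq:identities}, written as $di_{X_p}\Lie_{X_j}=\Lie_{X_j}di_{X_p}-di_{[X_j,X_p]}$, produces exactly the first sum $\sum_{j<p}(-1)^j\Lie_{X_j}di_{X_p}\theta(\ldots)$ of the right-hand side, plus leftover terms $-(-1)^jdi_{[X_j,X_p]}\theta(\ldots,\widehat{X}_j,\ldots)$; a sign check identifies these leftovers with the $l=p$ summands of the bracket sum $\sum_{j<l}(-1)^{j+l}(-1)^{p-1}di_{[X_j,X_l]}\theta(\ldots)$. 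Next, for the isolated last term I would use $di_{X_p}=\Lie_{X_p}-i_{X_p}d$ together with $i_{X_p}d\circ di_{X_{p-1}}=0$ (a consequence of $d^2=0$, valid on $\Gamma(F_\leaf)$), which turns $-(-1)^{p-1}di_{X_p}di_{X_{p-1}}\theta(X_0,\ldots,X_{p-2})$ into the middle term $(-1)^p\Lie_{X_p}di_{X_{p-1}}\theta(X_0,\ldots,X_{p-2})$.

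It then remains to match the surviving bracket terms $di_{X_p}\theta([X_j,X_l],\ldots)$ with $j<l\leq p-1$ against the $l\leq p-1$ summands of the bracket sum. This reduces to the single relation $di_{X_p}\theta([X_j,X_l],Z_1,\ldots,Z_{p-2})=(-1)^{p-1}di_{[X_j,X_l]}\theta(Z_1,\ldots,Z_{p-2},X_p)$, where $Z_1,\ldots,Z_{p-2}$ are the remaining fields; this is obtained by applying $d$ to the interior-product identity $i_{X_p}\theta(W,Z_1,\ldots,Z_{p-2})=(-1)^{p-1}i_W\theta(Z_1,\ldots,Z_{p-2},X_p)$ expressing that $\theta$ is $T\leaf$-skew. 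I expect this to be the main obstacle: the operator $d$ is \emph{not} defined on $\Gamma(F_\leaf)$ — only the composites $\Lie_X$, $di_X$, $i_Xd$ are — so the skewness relation cannot simply be differentiated. The resolution is to use the extension-independence of $di_X$: exploiting the foliation integrating $E$, one chooses extensions of the relevant sections that are constant in directions transverse to $\leaf$, so that the $T\leaf$-skewness holds as an identity of genuine differential forms on a neighbourhood of $\leaf$ rather than only along it; then $d$ may be applied honestly and, upon restriction, yields the desired relation for $di_X$. Finally I would collect the four groups of terms and check by a routine sign count that they reassemble into the stated right-hand side.
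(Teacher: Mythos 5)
Your computation is correct and is essentially the paper's own proof, which likewise develops $\delta\theta$ via \eqref{eq:def-delta}, commutes $di_{X_p}$ past each $\Lie_{X_j}$ using $di_{X_p}\Lie_{X_j}=\Lie_{X_j}di_{X_p}-di_{[X_j,X_p]}$ (absorbing the leftovers as the $l=p$ bracket terms), handles the last Koszul term with $\Lie_{X_p}=di_{X_p}+i_{X_p}d$ and $d^2=0$, and invokes the $T\leaf$-skewness of $\theta$ for the remaining bracket terms; your sign bookkeeping, including the $(-1)^{p-1}$ from the cyclic shift, checks out. Your additional care at the skewness step --- observing that $d$ alone is undefined on $\Gamma(F_\leaf)$, so the identity $i_{X_p}\theta([X_j,X_l],Z_1,\ldots,Z_{p-2})=(-1)^{p-1}i_{[X_j,X_l]}\theta(Z_1,\ldots,Z_{p-2},X_p)$ cannot simply be differentiated, and must instead be promoted to an identity of genuine forms via suitably adapted extensions before applying $d$ --- is a legitimate refinement of what the paper's terse proof leaves implicit; just note that such transversally constant extensions exist only in a local product chart around a point of the (possibly immersed) leaf of a possibly singular foliation, which suffices because the asserted identity and all the operators involved are local.
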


\begin{proof}
  Develop $\delta \theta $, use the identity $di_{X_p} \Lie_{X_j} = \Lie_{X_j}di_{X_p}-di_{[X_j,X_p]}$  on $\Gamma(F_\leaf)$ (see \eqref{eq:identities}), together with the fact that $\theta$ is $T\leaf$-skew.
\end{proof}

We are now ready to prove that $\delta$ defines a differential.

\begin{lemma}\label{lemma:squares-to-zero}
  The operator $\delta:\Osk{\bullet} \to  \Osk{\bullet+1}$ squares to zero.
\end{lemma}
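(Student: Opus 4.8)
The plan is to prove $\delta^2=0$ by a direct but carefully organized computation, reducing the verification at degree $p$ to the already-established lower-degree facts and the Cartan-type identities of Remark~\ref{rem:cartanid}. Since the expression \eqref{eq:def-delta} for $\delta$ differs from the usual Koszul formula only in its last term $-(-1)^p di_{X_p}\theta(X_0,\ldots,X_{p-1})$, I would split $\delta = \delta_K + \delta_{\mathrm{corr}}$, where $\delta_K$ denotes the ``formal Koszul part'' (the first two sums) and $\delta_{\mathrm{corr}}$ the correction term, all interpreted via the operators $\Lie_X, i_Xd, di_X$ on $\Gamma(F_\leaf)$ from \eqref{eq:iX}. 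The point is that these operators satisfy exactly the same graded commutation relations \eqref{eq:identities} as their honest counterparts on forms, so that $\delta_K^2$ ``wants to vanish'' for the same algebraic reasons the exterior derivative squares to zero; the genuine content is controlling the cross terms between $\delta_K$ and $\delta_{\mathrm{corr}}$ and the square $\delta_{\mathrm{corr}}^2$.

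Concretely, I would first record that the correction term is itself governed by Lemma~\ref{lemma:ddeltaomega}, which rewrites $di_{X_p}(\delta\theta(X_0,\ldots,X_{p-1}))$ as a sum of three pieces: two $\Lie$-type terms and one $di_{[X_j,X_l]}$-type term. This lemma is precisely the tool that lets me expand the correction part of $\delta(\delta\theta)$ without reintroducing an undefined operator $d$ on $\Gamma(F_\leaf)$. I would then compute $\delta(\delta\theta)(X_0,\ldots,X_{p+1})$ by substituting \eqref{eq:def-delta} into itself and grouping the resulting terms into four families according to which operators appear: $\Lie\Lie$ terms, $\Lie$-applied-to-bracket and bracket-of-brackets terms, $\Lie\,di$ mixed terms, and pure $di\,di$ terms. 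The $\Lie\Lie$ and bracket terms cancel by the same combinatorial antisymmetrization that proves the classical identity, using $i_{[X,Y]}=[\Lie_X,i_Y]$ and $di_{[X,Y]}=\Lie_X di_Y - di_Y\Lie_X$ from \eqref{eq:identities}; the Jacobi identity for the Lie bracket of vector fields handles the double-bracket terms.

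The main obstacle, as expected, is showing that the mixed $\Lie\,di$ terms and the $di\,di$ terms cancel against the contributions coming from the correction term via Lemma~\ref{lemma:ddeltaomega}. I would handle this by feeding the output of Lemma~\ref{lemma:ddeltaomega} directly into the correction part of the outer $\delta$: the two $\Lie$-type pieces of that lemma match the mixed $\Lie\,di$ terms produced by applying $\delta_K$ to $\delta_{\mathrm{corr}}\theta$, while the $di_{[X_j,X_l]}$-type piece pairs off against the bracket-summands of the outer correction term. The key algebraic inputs throughout are the relation $di_Z\Lie_{X}=\Lie_X di_Z - di_{[X,Z]}$ and the $T\leaf$-skewness of $\theta$ (established in the preceding Lemma), which is what allows indices to be transposed so that seemingly distinct terms coincide up to sign.

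I would organize the bookkeeping by working coefficient-by-coefficient, i.e. fixing the multi-index pattern (which arguments are contracted, which appear inside a bracket) and checking that the total coefficient of each such pattern in $\delta(\delta\theta)(X_0,\ldots,X_{p+1})$ vanishes. This sidesteps any need to manipulate the entire alternating sum at once and makes the sign cancellations transparent. Rather than grinding through all index ranges, the cleanest presentation is to verify the identity on decomposable test inputs and invoke the antisymmetry already guaranteed by the $T\leaf$-skewness lemma, reducing the number of independent patterns to check. The upshot is that $\delta^2=0$ follows formally once the classical cancellation is combined with the single nontrivial identity of Lemma~\ref{lemma:ddeltaomega}, with no new geometric input required beyond the well-definedness of the operators in \eqref{eq:iX}.
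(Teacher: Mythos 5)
Your proposal is correct and follows essentially the same route as the paper's own proof: the paper likewise lets the Koszul-type terms of $\delta(\delta\theta)$ cancel exactly as in the classical verification that the exterior derivative squares to zero (using the identities \eqref{eq:identities} on $\Gamma(F_\leaf)$), combines the leftover correction terms by means of the $T\leaf$-skewness of $\theta$, and cancels the result against the expansion of $di_{X_p}(\delta\theta(X_0,\ldots,X_{p-1}))$ provided by Lemma~\ref{lemma:ddeltaomega}. The differences (your explicit splitting $\delta=\delta_K+\delta_{\mathrm{corr}}$ and coefficient-by-pattern bookkeeping versus the paper's direct listing of the four remaining sums) are purely organizational.
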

\begin{proof}
  Let $\omega\in\Osk{p-1}$ and consider $(\delta(\delta\omega))(X_0,\ldots,X_p)$. The combinations of the terms in \eqref{eq:def-delta} that appear in the usual definition of the exterior derivative add up to zero. The remaining terms are $-(-1)^{p-1}$ times
  \begin{multline*}
  \sum_{j<p} (-1)^j \Lie_{X_j} di_{X_p}\omega(X_0,\ldots,\widehat{X}_j,\ldots,X_{p-1}) +
  (-1)^p \Lie_{X_p} di_{X_{p-1}}\omega(X_0,\ldots,{X}_{p-2})\\
  + \sum_{j<l} (-1)^{j+l}  d i_{X_p} \omega([X_j,X_l],\ldots,\widehat{X}_j,\ldots,\widehat{X}_l,\ldots, X_{p-1}) \\
  +\sum_{j<p}(-1)^{j+p} di_{X_{p-1}}\omega([X_j,X_p],\ldots, \widehat{X}_j,\ldots, {X}_{p-2}).
  \end{multline*}
  By using the $T\leaf$-skewness of $\omega$, the last two terms add up to
  $$
  (-1)^{p-1}\sum_{j<l} (-1)^{j+l}di_{[X_j,X_l]} \omega(X_0,\ldots, \widehat{X}_j,\ldots,\widehat{X}_l, \ldots, X_{p}).
  $$
  By using Lemma \ref{lemma:ddeltaomega}, we see that $\delta(\delta\omega)$ vanishes.
\end{proof}

Hence, for each leaf $\mathcal{O}$ of $L$,  $(\Osk{p},\delta)$ is a differential complex. We now observe how it relates to $(\Omega^\bullet(\mathcal{O}),d)$, the usual complex of differential forms on $\mathcal{O}$.

For $\theta\in\Osk{p}$ and $Y,Z\in\Gamma(T\leaf)$ we have
$$
i_Y i_Z \theta(X_1,\ldots, X_p) = -i_Z i_Y \theta(X_1,\ldots, X_p),
$$
where the first insertion is a map $\Gamma(F_\leaf)\to \Gamma(\wedge^{k-1}T^*M|_{\leaf})$ whereas the second is a map $\Gamma(\wedge^{k-1}T^*M|_{\leaf}) \to \Gamma(\wedge^{k-2}T^*M|_{\leaf})$. This means that the restriction of $\theta$ is skew-symmetric and we thus have a restriction map
\begin{equation}\label{eq:rO}
  r_\leaf:\Oskb \to \Omega^{\bullet+ k}(\leaf).
\end{equation}
The following theorem consolidates the results of this section.

\begin{theorem}\label{theo:delta}
  The operator $\delta$ defined in \eqref{eq:def-delta} is a differential on $\Oskb$, making it into a complex,
  and the restriction map $r_{\leaf}: (\Oskb,\delta) \to (\Omega^{\bullet+ k}(\leaf),d)$ is a chain map.
\end{theorem}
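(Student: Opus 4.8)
The statement has two parts: that $\delta$ is a differential on $\Oskb$ (equivalently, that $\delta$ lands in $T\leaf$-skew forms and squares to zero), and that $r_\leaf$ is a chain map. The first part is already almost entirely assembled from the preceding lemmas, so my plan is to invoke them and then concentrate the real work on the chain-map assertion.

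For the differential, I would proceed as follows. First recall that $\delta$ is well-defined on the right spaces: the operators $\Lie_X$ and $di_X$ descend to $\Gamma(F_\leaf)$ under the two standing assumptions of this subsection (integrability of $E$ and $\Lie_X$-invariance of $\Gamma(A_L|_\leaf)$), so each term of \eqref{eq:def-delta} makes sense. Then the inclusion $\delta(\Osk{p})\subseteq \Osk{p+1}$ is exactly the content of the Lemma preceding Lemma~\ref{lemma:ddeltaomega}, and $\delta\circ\delta=0$ is Lemma~\ref{lemma:squares-to-zero}. Thus the first assertion follows immediately by citing these two lemmas; there is no new computation to perform here.

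The substantive part is showing $r_\leaf$ is a chain map, i.e.\ that $r_\leaf(\delta\theta)=d(r_\leaf\theta)$ for $\theta\in\Osk{p}$, where $r_\leaf$ is the restriction/total-antisymmetrization map of \eqref{eq:rO} sending a $T\leaf$-skew $F_\leaf$-valued $p$-form to an ordinary $(p{+}k)$-form on $\leaf$. My plan is to compare the Koszul-type formula \eqref{eq:def-delta} for $\delta\theta$ with the Koszul formula for the honest exterior derivative $d$ of the $(p{+}k)$-form $r_\leaf\theta$, evaluated on $p{+}k{+}1$ tangent vectors $X_0,\dots,X_{p+k}$. The key mechanism is that $r_\leaf\theta$ is obtained from $\theta$ by using up $k$ of its slots through the contraction map $\iota_{T\leaf}$ built into $F_\leaf=\wedge^kT^*M|_\leaf/A_L|_\leaf$; concretely, $r_\leaf\theta(X_0,\dots,X_{p+k})$ antisymmetrizes $i_{X_{p}}\cdots i_{X_{p+k-1}}\theta(X_0,\dots,X_{p-1})$ (up to a combinatorial normalization) over all $X_j$. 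The main point to verify is that, when one writes out $d(r_\leaf\theta)$ via the Koszul formula, the $\Lie_{X_j}$ and bracket terms reorganize exactly into $r_\leaf$ applied to the corresponding $\Lie_{X_j}$ and bracket terms of $\delta\theta$, \emph{and} that the extra last term $-(-1)^p di_{X_p}\theta(\cdots)$ in \eqref{eq:def-delta} is precisely what accounts for the difference between differentiating the form $\theta(X_0,\dots,X_{p-1})\in\Gamma(F_\leaf)\subset \wedge^kT^*M|_\leaf/A_L|_\leaf$ and the genuine $d$ acting after the $k$ interior products have been applied. In other words, the nontrivial content is bookkeeping: on $\Gamma(F_\leaf)$ only $\Lie_X$ and $di_X$ (not $d$ alone) are defined, and the Cartan relation $\Lie_X = di_X + i_X d$ together with $[\Lie_X,i_Y]=i_{[X,Y]}$ (Remark~\ref{rem:cartanid}, equation~\eqref{eq:identities}) converts the leaf-intrinsic expression into the ambient $d$ after contracting the remaining $k$ arguments.

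The step I expect to be the main obstacle is precisely this combinatorial reconciliation of the two Koszul expansions: keeping track of signs and of which interior products land in $F_\leaf$ versus in the honest form complex, and confirming that the anomalous final term of $\delta$ matches the contribution coming from the $(p{+}k)$-th through $(p{+}k)$-th slots of $d(r_\leaf\theta)$ rather than producing a spurious residue. I would organize this by fixing the identity at low degree first—indeed $p=0$ and $p=1$ are essentially the computations already displayed just before Definition~\ref{def:delta} and in the proof of Proposition~\ref{prop:delta-p=1}—and then arguing the general case either by an explicit sign-bookkeeping induction on $p$ or, more cleanly, by noting that both $r_\leaf\circ\delta$ and $d\circ r_\leaf$ are first-order differential operators agreeing with $d$ modulo the lower-order correction terms controlled by \eqref{eq:identities}, so that the identity \eqref{eq:identity-Lie} forces their equality term by term. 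Since the $T\leaf$-skewness of $\theta$ is what guarantees $r_\leaf\theta$ is a well-defined honest form in the first place, I would make sure to use it at exactly the points where an interior product $i_{X_j}$ is commuted past the $F_\leaf$-valued slot, mirroring its use in the proof of the preceding lemma.
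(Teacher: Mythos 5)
Your proposal is correct and follows essentially the same route as the paper: the first assertion is handled exactly as you say, by citing the lemma that $\delta$ preserves $T\leaf$-skewness together with Lemma~\ref{lemma:squares-to-zero}, and the chain-map property is proved by precisely the Koszul comparison you outline, which the paper organizes as an induction on the number of contractions—applying $i_{X_{p+r}}$ for $r=1,\dots,k$ to $\delta\theta(X_0,\dots,X_p)$ and using $\Lie_X = di_X + i_Xd$ and $[\Lie_{X_j},i_{X}]=i_{[X_j,X]}$ on $\Gamma(\wedge^{k-r}T^*M|_\leaf)$ at each stage, so that the Koszul-with-anomaly shape persists with the anomalous term shifted to $-(-1)^{p+r}di_{X_{p+r}}(\cdots)$. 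In particular, the ``spurious residue'' you worried about does not arise, since at the final contraction the interior product acts on a degree-zero form and the anomalous term vanishes, leaving exactly the Koszul formula for $d(r_\leaf\theta)$.
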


\begin{proof}
  It remains to check the last assertion about the restriction map being a chain map.
  So take $X_0, \ldots, X_{p+k}\in \Gamma(T\leaf)$. We first consider $i_{X_{p+1}} \delta\theta(X_{0},\ldots, X_{p})$. Using, as before, $\Lie_{X_{p+1}}=di_{X_{p+1}}+i_{X_{p+1}}d$ on $\Gamma(F_\leaf)$ and the identity $[\Lie_{X_j},i_{X_{p+1}}]=i_{[X_j,X_{p+1}]}$ for operators $\Gamma(F_{\mathcal{O}})\to \Gamma(\wedge^{k-1}T^*M|_{\mathcal{O}})$, we have
  \begin{align*}
    i_{X_{p+1}} \delta\theta(X_{0},\ldots, X_{p}) & {} = \sum_{j\leq p+1} (-1)^j
    \Lie_{X_j} (\iota_{T\leaf}\circ \theta)(X_0, \ldots, \widehat{X}_j, \ldots, X_{p+1})  \\
    & \phantom{ {} = } + \sum_{j<l\leq p+1}
    (-1)^{j+l}(\iota_{T\leaf}\circ \theta)([X_j,X_l] \ldots  \widehat{X}_j,\ldots,\widehat{X}_l, \ldots, X_{p+1}) \\
    & \phantom{ {} = }  -(-1)^{p+1}
    d i_{X_{p+1}}  (\iota_{T\leaf}\circ \theta)(X_0, \ldots, X_{p}).\\
  \end{align*}
  Applying the same identities on $\Gamma(\wedge^{k-r} T^*M|_{\leaf})$ for $i_{X_{p+r}}\ldots i_{X_{p+1}}\delta\theta(X_{0},\ldots, X_{p})$, with $0\leq r\leq k$, we get to the expression for $(r_\leaf(\delta \theta))(X_0,\ldots, X_{p+k})$:
 \begin{align*}
i_{X_{p+k}}\ldots i_{X_{p+1}} \delta\theta(X_{0},&\ldots, X_{p}) \\ & {} = \sum_{j\leq p+k} (-1)^j
\Lie_{X_j} (r_\leaf \theta)(X_0, \ldots, \widehat{X}_j, \ldots, X_{p+k})  \\
& \phantom{ {} = } + \sum_{j<l\leq p+k}
(-1)^{j+l}(r_\leaf \theta)([X_j,X_l], \ldots,  \widehat{X}_j,\ldots,\widehat{X}_l, \ldots, X_{p+k}) \\
& {} = (d(r_\leaf \theta))(X_0,\ldots, X_{p+k}),
\end{align*}
  where we have used that  $d i_{X_{p+k}}  (r_\leaf \theta)(X_0, \ldots, X_{p})=0$.
\end{proof}

% Theorem~\ref{theo:delta}
The previous theorem  provides an interpretation of \eqref{eq:cocycle} as a cocycle condition.
%\newpage

\begin{corollary}\label{cor:condition-c}With the hypotheses of  Theorem~\ref{thm:integrability}, condition (c) is equivalent to
  \begin{itemize}
  \item[(c')] for each integral leaf $\mathcal{O}\hookrightarrow M$, the form $\varepsilon_\leaf \in \Osk{1}$ satisfies $\delta\varepsilon_\leaf=0$.
  \end{itemize}
\end{corollary}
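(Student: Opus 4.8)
The plan is to show the logical equivalence of condition (c) in Theorem~\ref{thm:integrability} with the stated cocycle condition (c'), by directly comparing the two expressions. Recall that in degree $p=1$, Proposition~\ref{prop:delta-p=1} gives the explicit formula
$$
\delta\varepsilon_\leaf(X,Y) = \Lie_X(\varepsilon_\leaf(Y)) - i_Y d(\varepsilon_\leaf(X)) - \varepsilon_\leaf([X,Y]),
$$
which is valid because the two standing hypotheses of Theorem~\ref{thm:integrability}, namely the integrability of $E$ and the $\Lie_X$-invariance of $\Gamma(A_L|_\leaf)$, are precisely what guarantees that the operators $\Lie_X$ and $i_X d$ descend to $\Gamma(F_\leaf)$ as in \eqref{eq:iX}. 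Thus the section $\varepsilon_\leaf \in \Osk{1}$ is a legitimate object of the complex $(\Oskb,\delta)$.

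The core of the argument is then essentially a matter of unwinding definitions. First I would observe that condition (c) of Theorem~\ref{thm:integrability} is the requirement
$$
\Lie_X(\varepsilon_\leaf(Y)) - i_Y d(\varepsilon_\leaf(X)) - \varepsilon_\leaf([X,Y]) = 0 \in \Gamma(F_\leaf)
$$
for all $X,Y \in \Gamma(T\leaf)$ and for each leaf $\leaf$. Comparing this term by term with the formula for $\delta\varepsilon_\leaf(X,Y)$ above, one sees that the left-hand side of \eqref{eq:cocycle} is exactly $\delta\varepsilon_\leaf(X,Y)$. Hence condition (c) holds if and only if $\delta\varepsilon_\leaf(X,Y)=0$ for all $X,Y$, which is precisely the statement that $\delta\varepsilon_\leaf=0$, i.e. condition (c'). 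Since this equivalence is asserted leaf by leaf in both formulations, quantifying over all integral leaves $\leaf \hookrightarrow M$ completes the identification.

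There is no serious obstacle here; the content of the corollary is that the somewhat ad hoc integrability expression \eqref{eq:cocycle} coincides with the genuine differential $\delta$ of the complex $\Oskb$ constructed in this section. The only point requiring a word of care is to confirm that $\varepsilon_\leaf$ indeed lies in $\Osk{1}$ rather than merely in $\Omega^1(\leaf,F_\leaf)$, so that $\delta\varepsilon_\leaf$ is defined via Definition~\ref{def:delta}; but this is exactly the $T\leaf$-skewness of $\varepsilon_\leaf$ recorded just after \eqref{eq:epsO}. With that in place, the proof reduces to the single sentence that \eqref{eq:cocycle} is the $p=1$ instance of $\delta$ applied to $\varepsilon_\leaf$, as furnished by Proposition~\ref{prop:delta-p=1}.
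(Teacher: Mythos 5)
Your proof is correct and matches the paper's (essentially definitional) argument: the corollary is exactly the observation that the left-hand side of \eqref{eq:cocycle} is the degree-one formula for $\delta$ from Proposition~\ref{prop:delta-p=1}, applied leafwise to $\varepsilon_\leaf$, with hypotheses (a) and (b) of Theorem~\ref{thm:integrability} ensuring that the operators \eqref{eq:iX} descend to $\Gamma(F_\leaf)$ and that $\varepsilon_\leaf\in\Osk{1}$. Your added care in checking that $\varepsilon_\leaf$ is $T\leaf$-skew, so that $\delta\varepsilon_\leaf$ is defined via Definition~\ref{def:delta}, is precisely the point the paper leaves implicit.
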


Note that when $\leaf=M$ and $A_L=0$, the restriction map $r_\leaf$ is an isomorphism with the (shifted) de Rham complex of the manifold:
\begin{equation}\label{eq:rO-iso}
(\Omega_{\textrm{sk}}^\bullet(M,\wedge^kT^*M),\delta) \stackrel{\sim}{\to} (\Omega^{\bullet+k}(M),d).
\end{equation}
 On the other hand, when $k=1$ and $L\subseteq TM+T^*M$ is lagrangian, then $A_L|_\leaf=\Ann(T\leaf)$ and $F_{\mathcal{O}}\cong T^*\mathcal{O}$, and the restriction map  \eqref{eq:rO} is an isomorphism
\begin{equation}\label{eq:iso}
  (\Osk{\bullet},\delta) \stackrel{\sim}{\to} (\Omega^{\bullet+1}(\leaf),d).
\end{equation}

For an integrable, isotropic subbundle $L$,
it follows from Theorem~\ref{theo:delta} that, on each leaf, the 1-cocycle $\varepsilon_\leaf$ restricts to a closed $(k+1)$-form on $\leaf$,
$$
\omega_\leaf:=r_\leaf(\varepsilon_\leaf)\in \Omega^{k+1}(\leaf).
$$
Note that, while one can completely recover $L|_\leaf$ from $\varepsilon_\leaf$ (cf. Proposition~\ref{prop:isotropic subsp}),
one may lose a lot of information by passing to $\omega_\leaf$.

For usual Dirac structures in $TM+T^*M$, using \eqref{eq:iso} one sees that Theorem~\ref{thm:integrability} and Corollary~\ref{cor:condition-c}  describe their underlying presymplectic foliations, as in \cite[Thm.~2.3.6]{Co}. More generally, for $k=1$, the form $\omega_\leaf$ is a presymplectic form on the leaf, and the resulting presymplectic foliation is the one underlying the only Dirac structure $\widetilde{L}$ containing $L$ and satisfying $\pr_1(\widetilde{L})=\pr_1(L)$. The information about $L$ is encoded in the cocycle $\varepsilon_\leaf: T\leaf\to F_\leaf$, which can be regarded as a lift of the presymplectic structure $\omega_\leaf$ (seen as a map $T\leaf\to T^*\leaf$). So, even for $k=1$, the theorem above provides new information,
in that it extends \cite[Thm.~2.3.6]{Co}  to general integrable isotropic subbundles of $TM + T^*M$.

The complex $\Osk{\bullet}$ is the suitable framework to describe isotropic subbundles of $TM + \wedge^k T^*M$. However, its full meaning is yet to be understood and will be the focus of future work.

\medskip

\paragraph{\bf Regular case.}
Let us consider an isotropic subbundle $L\subset TM + \wedge^kT^*M$ such that the distribution $E=\pr_1(L)\subseteq TM$ is integrable and regular, i.e., has constant rank, and the space $\Gamma(A_L)$ is invariant by $\Lie_X$ for $X\in \Gamma(E)$. In this case, one can make sense of the leafwise constructions of Section~\ref{sec:diffcomplex} globally, over $M$.

The space $\Omega^\bullet_E(M):=\Gamma(\wedge^\bullet E^*)$ of ``foliated forms'' on $M$ is equipped with a natural differential $d_E$ (viewing $E$ as a Lie algebroid). Since $E$ is regular,
$A_L=L\cap\wedge^k T^*M$ is a subbundle of $\wedge^k T^*M$, and we consider the quotient bundle, cf. \eqref{eq:FO},
$$
F=\wedge^k T^*M/A_L \to M.
$$
In the space $\Omega^\bullet_E(M,F)=\Gamma(\wedge^\bullet E^*\otimes F)$ of forms with values in $F$, we have the subspace of {\em $E$-skew forms} (defined  as in \eqref{eq:TOskew}),
$$
\Omega^\bullet_{E\textnormal{-}\mathrm{sk}}(M,F) \subseteq \Omega^\bullet_E(M,F).
$$
Just as in Definition~\ref{def:delta}, one has a differential
$$
\delta_E: \Omega^\bullet_{E\textnormal{-}\mathrm{sk}}(M,F) \to \Omega^{\bullet+1}_{E\textnormal{-}\mathrm{sk}}(M,F),
$$
so $(\Omega^\bullet_{E\textnormal{-}\mathrm{sk}}(M,F),\delta_E)$ is a complex. One also has a restriction defining a chain map analogous to \eqref{eq:rO},
$$
r: (\Omega^\bullet_{E\textnormal{-}\mathrm{sk}}(M,F),\delta_E) \to (\Omega^{\bullet + k}_E(M),d_E).
$$
Similarly to \eqref{eq:epsO}, the subbundle $L$ is determined pointwise by an element
$$
\varepsilon \in \Omega^1_{E\textnormal{-}\mathrm{sk}}(M,F).
$$
The following extends Theorem~\ref{thm:integrability} and Corollary~\ref{cor:condition-c} in the regular case.

\begin{proposition}\label{prop:s-closeness}
  Let $L\subset TM+ \wedge^k T^*M$ be an isotropic subbundle such that $\Gamma(A_L)$ is invariant by $\Lie_X$ for $X\in \Gamma(E)$ and the distribution $E$ is  involutive and regular. For each leaf $\tau:\mathcal{O}\hookrightarrow M$, the following is a chain map diagram:
  \begin{center}
    \begin{tikzpicture}
      \matrix (m) [matrix of math nodes,row sep=2em,column sep=4em,minimum
      width=2em]
      { (\Omega^\bullet_{E\textnormal{-}\mathrm{sk}}(M,F), \delta_E) & (\Omega^\bullet_{\mathrm{sk}}(\leaf,F_\leaf), \delta) \\
        (\Omega^{\bullet+k}_E(M),d_E) & (\Omega^{\bullet+k}(\leaf),d). \\ };
      \path[-stealth]
      (m-1-1) edge node[above]{$\tau^*$} (m-1-2)
      (m-1-1) edge node[left]{$r$} (m-2-1)
      (m-2-1) edge node[above]{$\tau^*$} (m-2-2)
      (m-1-2) edge node[left]{$r_\leaf$} (m-2-2);
    \end{tikzpicture}
  \end{center}
  Moreover, $L$ is integrable if and only if $\delta_E \varepsilon=0$.
\end{proposition}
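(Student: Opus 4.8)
The plan is to split the statement into two parts: first, that the square is a chain-map diagram, and second, the integrability criterion $\delta_E\varepsilon=0$. For the diagram I would verify separately that each of the four arrows is a chain map and that the square commutes. Two of these are already in hand: the right vertical map $r_\leaf$ is a chain map by Theorem~\ref{theo:delta}, and the left vertical map $r$ is a chain map by the verbatim global analogue of that computation, since $\delta_E$ is given by the same Koszul-type formula \eqref{eq:def-delta} with $E$ in place of $T\leaf$. The bottom horizontal $\tau^*$ is the standard compatibility of the foliated differential $d_E$ with restriction to a leaf, using that $E|_\leaf=T\leaf$ identifies foliated forms with ordinary forms on $\leaf$.

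The first genuinely new ingredient is that the top horizontal $\tau^*$ is a chain map, i.e. $\tau^*\circ\delta_E=\delta\circ\tau^*$. The key observation is that the operators $\Lie_X$ and $di_X$ on $\Gamma(F)$ are natural under restriction: for $X\in\Gamma(E)$ the section $X$ extends $X|_\leaf\in\Gamma(T\leaf)$, so $(\Lie_X\theta)|_\leaf=\Lie_{X|_\leaf}(\tau^*\theta)$, and likewise for $di_X$, directly from the definitions in \eqref{eq:operators-in-forms} and \eqref{eq:iX}. Since both $\delta_E$ and $\delta$ are assembled from these operators through the identical expression \eqref{eq:def-delta}, restricting $\delta_E\theta$ to $\leaf$ term by term reproduces $\delta(\tau^*\theta)$, once a local frame of $T\leaf$ is extended to sections of $E$. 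Commutativity of the square, $r_\leaf\circ\tau^*=\tau^*\circ r$, is then a pointwise statement: both composites contract an $E$-skew form along $T\leaf$ to produce an ordinary $(\bullet+k)$-form on $\leaf$, and the two restriction operations commute because $r$ and $r_\leaf$ are defined by the same fibrewise contraction.

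For the integrability criterion I would invoke Theorem~\ref{thm:integrability} together with Corollary~\ref{cor:condition-c}. Under the standing hypotheses, condition (a) (integrability of $E$) and condition (b) (invariance of $\Gamma(A_L)$ under $\Lie_X$) are assumed, so $L$ is integrable if and only if condition (c) holds, which by Corollary~\ref{cor:condition-c} means $\delta\varepsilon_\leaf=0$ on every leaf $\leaf$. Now $\varepsilon_\leaf=\tau^*\varepsilon$, both being determined pointwise by \eqref{eq:def-epsilon}, and using that the top $\tau^*$ is a chain map gives $\delta\varepsilon_\leaf=\delta\,\tau^*\varepsilon=\tau^*\delta_E\varepsilon$. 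Hence $L$ is integrable precisely when $\tau^*(\delta_E\varepsilon)=0$ for every leaf; since $\delta_E\varepsilon$ is a section of $\wedge^2E^*\otimes F$ and a foliated form vanishes if and only if its restriction to each leaf vanishes, this is equivalent to $\delta_E\varepsilon=0$.

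I expect the main obstacle to be the top-horizontal chain-map identity $\tau^*\delta_E=\delta\tau^*$: because $\Lie_X$ and $di_X$ are genuine differential operators rather than $C^\infty(M)$-linear in $X$, one must argue carefully that restriction to the leaf intertwines them and that the term-by-term restriction of \eqref{eq:def-delta} is legitimate after choosing $E$-extensions of a local frame of $T\leaf$. Everything else reduces either to results established earlier in this section or to routine, if somewhat lengthy, bookkeeping.
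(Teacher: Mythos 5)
Your proposal is correct and follows exactly the route the paper intends: the paper in fact states this proposition without proof, remarking only that the global complex $(\Omega^\bullet_{E\textnormal{-}\mathrm{sk}}(M,F),\delta_E)$ and the restriction chain map are ``analogous'' to the leafwise constructions, and your argument supplies precisely those details (compatibility of $\Lie_X$, $i_Xd$, $di_X$ with restriction to a leaf, term-by-term restriction of the Koszul-type formula \eqref{eq:def-delta}, and then Theorem~\ref{thm:integrability} with Corollary~\ref{cor:condition-c} leafwise). Your closing observation that a section of $\wedge^2E^*\otimes F$ vanishes if and only if its pullback to every leaf vanishes is exactly the point needed to pass from $\delta\varepsilon_\leaf=0$ on all leaves to $\delta_E\varepsilon=0$, so nothing is missing.
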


%%%%%%%%%%%%%%%%%%%%%%%%%%%%%%%%%%%%%%%%%%%%%%%%%%%%%%%%%%%%%%%%%%%%%%%%%%%%%%%%%%%%%%%%%

\subsection{Higher Dirac structures}\label{subsec:higherdirac}

We define a higher Dirac structure as follows.

\begin{definition}
  A subbundle $L\subset TM+ \wedge^kT^*M$ is called a {\em higher Dirac structure} if it is involutive and weakly lagrangian at each point.
\end{definition}

We use Theorem~\ref{thm:integrability} and \eqref{eq:weak-Lagrangian sbspace} to give a complete description of the leafwise geometry of higher Dirac and higher Poisson structures.

\begin{theorem}
  An involutive, isotropic subbundle $L\subset TM+\wedge^k T^*M$ is a higher Dirac structure if and only if, for each leaf $\leaf$, the cocycle $\varepsilon_\leaf\in \Osk{1}$ satisfies
\begin{equation}
  \varepsilon_\leaf(\mathrm{pr}_2(L|_\leaf)^\circ)=\{0\}.\label{eq:cond-higher-dirac}
\end{equation}
The subbundle $L$ is higher Poisson if and only if, in addition to \eqref{eq:cond-higher-dirac}, we have $L\cap TM=\{0\}$, i.e., $\ker(\varepsilon_\leaf)=\{0\}$ for each leaf $\leaf$.
\end{theorem}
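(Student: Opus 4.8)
The plan is to reduce the statement to the pointwise linear algebra of Section~\ref{sec:high-dirac-struct}, since involutivity of $L$ is assumed throughout and serves only to guarantee that the leafwise data are defined. Indeed, involutivity gives integrability of $E=\pr_1(L)$ (Theorem~\ref{thm:integrability}(a)) and the $\Lie_X$-invariance of $A_L|_\leaf$, so for each leaf $\tau\colon\leaf\hookrightarrow M$ the cocycle $\varepsilon_\leaf\in\Osk{1}$ of \eqref{eq:epsO} is defined, and by construction its value at a point $x\in\leaf$ is the pointwise map $\varepsilon_x\colon\pr_1(L_x)\to\wedge^k T_x^*M/A_L|_x$ of \eqref{eq:def-epsilon}. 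Likewise the fibre of $\pr_2(L|_\leaf)^\circ$ at $x$ is $\pr_2(L_x)^\circ$. By definition, $L$ is a higher Dirac structure precisely when each $L_x$ is weakly lagrangian, so everything comes down to recognizing \eqref{eq:cond-higher-dirac} as the leafwise assembly of the pointwise weak lagrangianity criterion \eqref{eq:weak-Lagrangian sbspace}.

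With this dictionary in place, I would argue the first equivalence directly. By \eqref{eq:weak-Lagrangian sbspace}, $L_x$ is weakly lagrangian if and only if $\pr_2(L_x)^\circ\subseteq\ker(\varepsilon_x)$. If $L$ is higher Dirac, this holds at every $x\in\leaf$, so $\varepsilon_\leaf$ annihilates $\pr_2(L|_\leaf)^\circ$, which is \eqref{eq:cond-higher-dirac}. Conversely, \eqref{eq:cond-higher-dirac} says that at each $x$ the subspace $\pr_2(L_x)^\circ$ lies in the domain $T_x\leaf$ of $\varepsilon_x$ and is killed by it, i.e. $\pr_2(L_x)^\circ\subseteq\ker(\varepsilon_x)$; together with the always-valid inclusion $\ker(\varepsilon_x)=L_x\cap T_xM\subseteq\pr_2(L_x)^\circ$ recorded before \eqref{eq:weak-Lagrangian sbspace}, this gives weak lagrangianity of $L_x$. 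Since every point of $M$ lies on some leaf, running this over all leaves proves the equivalence.

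For the higher Poisson statement, I would first note that $L\cap TM$ has fibre $L_x\cap T_xM=\ker(\varepsilon_x)$, so $L\cap TM=\{0\}$ is equivalent to $\ker(\varepsilon_\leaf)=\{0\}$ on every leaf. Assuming \eqref{eq:cond-higher-dirac}, so that $L$ is weakly lagrangian, Proposition~\ref{prop:HP} applied fibrewise shows that $L\cap TM=\{0\}$ holds if and only if each $L_x$ is the graph of a map $\Lambda_x\colon S_x\to T_xM$ with $S_x=\pr_2(L_x)$ satisfying conditions (a) and (b) of Definition~\ref{def:hpois}; since $L\cap TM=\{0\}$ makes $\pr_2|_L$ a fibrewise isomorphism onto $S=\pr_2(L)$, these assemble into a subbundle $S$ and a bundle map $\Lambda\colon S\to TM$. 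Finally, the involutivity of $L$ is condition (c): under the identification $\alpha\mapsto\Lambda(\alpha)+\alpha$, the higher Courant--Dorfman bracket of two sections of $L$ has $\pr_2$-component $\Lie_{\Lambda(\alpha)}\beta-i_{\Lambda(\beta)}d\alpha$ and $\pr_1$-component $[\Lambda(\alpha),\Lambda(\beta)]$, so $\Gamma(L)$ being involutive is exactly $\Gamma(S)$ being closed under the bracket of Definition~\ref{def:hpois}(c) together with $\Lambda$ being bracket-preserving. Hence $L$ is (the graph of) a higher Poisson structure if and only if \eqref{eq:cond-higher-dirac} holds and $\ker(\varepsilon_\leaf)=\{0\}$ on each leaf.

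The calculations involved are essentially bookkeeping, so the only point requiring care is the precise reading of \eqref{eq:cond-higher-dirac}: its vanishing must be understood to encode not merely that $\varepsilon_\leaf$ kills $\pr_2(L|_\leaf)^\circ$ but, beforehand, that $\pr_2(L|_\leaf)^\circ$ is contained in the domain $T\leaf$ of $\varepsilon_\leaf$ (equivalently, that $\ker(\varepsilon_x)=\pr_2(L_x)^\circ$ rather than only one inclusion). Once this is settled, the whole statement is a fibrewise translation of \eqref{eq:weak-Lagrangian sbspace} and Proposition~\ref{prop:HP}, with the involutivity hypothesis supplying both the existence of $\varepsilon_\leaf$ and the integrability condition (c) of Definition~\ref{def:hpois}.
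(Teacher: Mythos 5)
Your proposal is correct and follows essentially the same route as the paper: the first equivalence is read off pointwise from \eqref{eq:weak-Lagrangian sbspace}, and the second combines Proposition~\ref{prop:HP} with the translation of involutivity of $\Gamma(L)$ into condition (c) of Definition~\ref{def:hpois}. You merely spell out the bracket computation that the paper delegates to \cite{BCI}, and you rightly flag that \eqref{eq:cond-higher-dirac} is to be read as including the containment $\pr_2(L|_\leaf)^\circ\subseteq T\leaf$.
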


\begin{proof}
The first claim follows directly from \eqref{eq:weak-Lagrangian sbspace}. For the second one, from Proposition~\ref{prop:HP}, we know that weakly lagrangian subbundles $L\subset TM+ \wedge^k T^*M$   satisfying $L\cap TM=\{0\}$ are given by graphs of bundle maps $\Lambda: S\to TM$, where $S\subseteq \wedge^k T^*M$ is a subbundle such that $S^\circ =\{0\}$ and $i_{\Lambda(\alpha)}\beta = -i_{\Lambda(\beta)}\alpha$ for all $\alpha, \beta \in S$. One can then verify (see \cite{BCI}) that $L$ is integrable if and only if the pair $(S, \Lambda)$ is a higher Poisson structure (i.e., it satisfies (c) in Definition~\ref{def:hpois}).
\end{proof}

This picture of the leafwise geometry of higher Poisson structures $(S,\Lambda)$ complements the discussion in \cite[Sec.~5]{BCI}: in this case we have $A_L=\ker(\Lambda)$, and each leaf $\leaf$ of the foliation integrating $E=\Lambda(S)$ is equipped with  a $\delta_E$-closed $E$-skew form $\varepsilon_\leaf:T\leaf\to F_\leaf,$ given by $\varepsilon_\leaf(\Lambda(\alpha))=\alpha+\ker(\Lambda)$, which is moreover {\em nondegenerate}, i.e., injective.
The next example illustrates that this nondegeneracy of $\varepsilon_\leaf$ for higher Poisson structures may be lost upon restriction to the $(k+1)$-form $\omega_\leaf = r_\leaf(\varepsilon_\leaf)\in \Omega^{k+1}(\leaf)$.

\begin{example}\label{ex:nondeg}
Following Example~\ref{ex:w12} (see \cite[Ex.~6]{BCI}), given
$k_i$-plectic manifolds $(M_i,\omega_i)$, $i=1,2$, we have a
higher Poisson structure on $M=M_1\times M_2$ given by
$$
L =\{X+ i_X\omega_1\wedge \omega_2 \;|\; X\in
TM_1\} \subset TM+ \wedge^{k_1+k_2+1}T^*M.
$$
Here we view $\omega_i$ as a $k_i$-form on $M$ via pullback by the natural
projection $M\to M_i$. In this example, $E=TM_1\subseteq TM$, $A_L=\{0\}$, and
$$
\varepsilon: E\to \wedge^{k_1+k_2+1}T^*M,\;\;\; X\mapsto (i_X\omega_1)\wedge \omega_2.
$$
The leaves $\leaf$ are of the form $M_1\times \{y\}$, for $y\in M_2$.
It is clear that $\ker(\varepsilon) =\{0\}$, but the leafwise $(k_1+k_2+2)$-form obtained by restriction is identically zero.
\end{example}

Note that, when $k=1$, condition \eqref{eq:cond-higher-dirac} yields that $A_L|_\leaf=\Ann(T\leaf)$, so $\varepsilon_\leaf$ defines a $2$-form on $\leaf$, which is closed by Theorem \ref{thm:integrability} and the isomorphism \eqref{eq:iso}. We thus obtain the known presymplectic foliation underlying a Dirac structure \cite{Co}, which is symplectic, $\ker(\varepsilon_\leaf)=0$, exactly when the Dirac structure is Poisson.

\begin{example}\label{ex:k+1-form}\label{ex:HD-mnfd}
Given a form $\w \in \Omega^{k+1}(M)$, its graph $L=\mathrm{gr}(\w)$  defines a lagrangian subbundle of $TM+ \wedge^kT^*M$ so that $L\cap \wedge ^k\TM=\{0\}$. In terms of Proposition \ref{prop:s-closeness}, the leaves are given by the connected components of $M$, so the maps $\tau^*$ are identities when restricted to each connected component; since $A_L=\{0\}$, the restrictions give rise to isomorphisms \eqref{eq:rO-iso}. So the subbundle $L$ is integrable if and only if $d\omega=0$. Hence the graph of a closed $k+1$-form defines a lagrangian higher Dirac structure. In particular, multisymplectic forms correspond to lagrangian higher Dirac structures intersecting both $TM$ and $\wedge^k \TM$ trivially.
More generally, a higher Dirac structure $L$ such that $L\cap \wedge ^k\TM=\{0\}$ is determined by an integrable subbundle $E$ and  $\varepsilon\in \Omega^1_{E\textnormal{-}\mathrm{sk}}(M,F)$, where $F=\wedge^k T^*M$, satisfying $\delta_E\varepsilon=0$ and $\Ima(\varepsilon)^\circ = \ker(\varepsilon)$.
\end{example}

\begin{example}
  Given a top-dimensional multivector $\pi\in \wedge^{\dim M} TM$, let $k=\dim M -1$. Its graph $$L=\textup{gr}(\pi)=\{ i_\alpha\pi + \alpha \,|\, \alpha \in \wedge^{k}T^*M\}$$
defines a lagrangian subbundle of $TM+\wedge^kT^*M$ which is always integrable (\cite[prop.~3.4]{Za}). The points where $\pi$ vanishes are leaves $\leaf$ with $T\leaf=\{0\}$, $A_L|_\leaf=\wedge^k T^*M$, $\varepsilon_\leaf=0$ and the complex $\Osk{\bullet}$ vanishes. Away from these points, we have open leaves where $T\leaf=TM|_\leaf$, $A_L|_\leaf=\{0\}$ and $\varepsilon_\leaf$ comes from the volume form that is the inverse of $\pi$ on $\leaf$. By \eqref{eq:rO-iso} the complex $\Osk{\bullet}$ is isomorphic to the usual de Rham complex on $\leaf$ (with a shift).
\end{example}

\begin{example}
Let $N$ and $Q$ be manifolds. Consider a closed nonzero form $\w\in \Omega^{k+1}(N)$ and a function $f\in C^\infty(Q)$. For each $q\in Q$, define $\w_q:=f(q)\w$ to get a family of $k+1$-forms on $N$ parametrized by $Q$. We use this family to define on the product $M=N\times Q$ a regular isotropic subbundle of $TM+\wedge^k T^*M$, for $k\leq \dim Q$, by setting:  $E=TN$,  $A_L \subseteq \wedge^kT^*Q$  such that  $A_L^\circ=TN$ (e.g. $A_L= \wedge^kT^*Q$), and $\varepsilon(X)=i_X\w_q+A_L$ for  $X\in E$. The  corresponding isotropic subbundle $L$ is weakly lagrangian since, at each point of $M$, we have
$$
\pr_2(L)^\circ=(\image(\w_q)+A_L)^\circ=\ker(\w_q)=\ker(\varepsilon).
$$
By Lemma \ref{lem:Lagrangian sbspace}, $L$ is lagrangian if and only if $A_L=\Ann(E)=\wedge^k T^*Q$. To study integrability, we use conditions (a), (b) and (c') from  Theorem~\ref{thm:integrability} and Corollary~\ref{cor:condition-c}. Condition (a) holds trivially, while (b) requires that $\Gamma(A_L)$ is invariant by $\Lie_X$, with $X\in \Gamma(TN)$ (which is automatically satisfied when $A_L=\Ann(E)$). Finally, (c') is equivalent to $df\wedge(i_Yi_X\w)\in \Gamma(A_L)$ for any $X,Y\in \Gamma(TN)$. For $k=1$, this condition is always satisfied, as $A_L^\circ=TN$ implies $A_L=T^*Q$, and we recover the Dirac structure associated with a smooth family of presymplectic structures (parametrized by $Q$). For $k>1$, the condition is satisfied if and only if $df=0$,  leading to a much more rigid picture: a family which is constant on the connected components of $Q$.
\end{example}

\begin{example}
  Let us consider the classical field theory framework described in Section~\ref{sec:hpoiss}, and the higher Poisson structure $(S_{red},\Lambda_{red})$ on $Z=M/\mathbb{R}$ with coordinates $(x_i,y_j,p_{lk})$.
  Its foliation is defined by the fibres of the natural projection $Z\to B$ (resulting from the composition $Z\to P\to B$),  $(x_i,y_j,p_{lk})\mapsto (x_i)$ in coordinates. The subbundle $A_L=\ker(\Lambda)$ is identified with $\wedge^m\TM$, the bundle $F$ equals $\wedge^m T^*Z/\wedge^m\TM$, and the map $\varepsilon:E\to F$ is given by
  $$
  \varepsilon\left(\frac{\partial}{\partial y_l}\right) = \alpha_l + \wedge^m\TM,\;\;\;
  \varepsilon\left(\frac{\partial}{\partial p_{li}}\right) = -\gamma_{li} + \wedge^m\TM.
  $$
\end{example}

\begin{remark}\label{rmk:restriction of verepsilon}
According to \cite[Thm.~3.12]{Za}, a regular lagrangian subbundle $L=L(E,A_L,\varepsilon)$ is integrable if and only if $E$ is an involutive subbundle and, for any lift $E\to\wedge^k T^*M$ of $\varepsilon$ with extension $\tilde{\varepsilon}\in \Omega^{k+1}(M)$, we have
  \begin{equation}\label{eq:condition3}
    d\tilde{\varepsilon}|_{\wedge^3 E^* \otimes \wedge^{k-1} T^*M} =0.
  \end{equation}
  However, this is not enough for weakly lagrangian subbundles, as the next example shows.  Consider $k=2$ and $M=\mathbb{R}^5$, with coordinates $\{x_1,\ldots,x_5\}$. Set $E=\textup{span}\{ \partial_{x_1},\partial_{x_2} \}$ and, for a given $A_L$, define
  $$\varepsilon = dx_1\otimes(x_4dx_2\wedge dx_{3}+A_L) - dx_2\otimes(x_4dx_1\wedge dx_{3} + A_L).$$
Any extension $\tilde{\varepsilon}$ satisfies \eqref{eq:condition3} as $\wedge^3 E^*=\{0\}$ by dimensional reasons. The integrability of $L$ would imply that
  $$\ca \partial_{x_1}+x_4 dx_2\wedge dx_3, \partial_{x_2} - x_4 dx_1\wedge dx_{3}\cc= -dx_3\wedge dx_{4}\in \Gamma(A_L).$$
  Set $A_L=\textup{span}\{ dx_3\wedge dx_5,dx_4\wedge dx_5 \}\subset \Ann(E)$, so that $L=L(E,A_L,\ve)$ is not integrable. Finally, we check that $L$ is weakly lagrangian:
  $$\pp(L)^\circ = \textup{span}\{x_4dx_2\wedge dx_3,x_4dx_1\wedge dx_3, dx_3\wedge dx_5, dx_4 \wedge dx_5\}^\circ,$$
  so, when $x_4\neq 0$ we have $\pp(L)^\circ=\{0\}$, and when $x_4=0$ we have $\pp(L)^\circ=\textup{span}\{ \partial_{x_1},\partial_{x_2}\}$, which coincides with $\ker(\varepsilon)=L\cap TM$.
\end{remark}

%%%%%%%%%%%%%%%%%%%%%%%%%%%%%%%%%%%%%%%%%%%%%%%%%%%%%%%%%%%%%%%%%%%%%%%%%%%%%%%%%%%%%%%%%

%%%%%%%%%%%%%%%%%%%%%%%%%%%%%%%%%%%%%%%%%%%%%%%%%%%%%%%%%%%%%%%%%%%%%%%%%%%%%%%%%%%%%%%%%
\section{Higher presymplectic groupoids}\label{sec:HPSgpd}

Higher Dirac structures have underlying Lie algebroids, and we now identify
the corresponding global objects. As particular cases, these will include the presymplectic groupoids
of \cite{BCWZ} as well as the multisymplectic groupoids of \cite{BCI}, which are the global counterparts of
ordinary Dirac structures and higher Poisson structures, respectively.
Just as these examples, general higher Dirac structures are closely related to
multiplicative differential forms on Lie groupoids, that we briefly recall.

Consider a Lie groupoid $\gpd \rightrightarrows M$; we denote its source and target maps by $\sour, \tar: \gpd\to M$, and
its multiplication map by $\mathsf{m}: \gpd  _{\sour}\times_{\tar}\gpd \to \gpd$;
we often identify $M$ with its image in $\gpd$ by the groupoid identity section.
A differential form $\omega \in \Omega^n(\gpd)$ is called {\em multiplicative} if
$$
\mathsf{m}^*\omega = \pr_1^*\omega + \pr_2^*\omega,
$$
where $\pr_1, \pr_2: \gpd_{(2)}\to \gpd$ are the natural projections.

In order to relate multiplicative forms to higher Dirac structures, we will make use of the infinitesimal description of closed multiplicative forms obtained in \cite{AC,BC1}.
Let $A\to M$ be the Lie algebroid of $\gpd$ (as a vector bundle, $A=\ker(d\sour)|_M \subset T\gpd|_M$, with anchor map $\rho: A\to TM$ induced by $d\tar$ and Lie bracket coming from the identification of sections of $A$ with right-invariant vector fields on $\gpd$). Any closed multiplicative form $\omega \in \Omega^{k+1}(\gpd)$
defines a vector-bundle map
\begin{equation}\label{eq:mu}
  \mu: A\to \wedge^{k}T^*M,\;\;\; \mu(a)(X_1,\ldots,X_{k})=\omega(a,X_1,\ldots,X_{k})
\end{equation}
satisfying the following two properties:
\begin{align}
  & i_{\rho(a)}\mu(b) = - i_{\rho(b)} \mu(a), \label{eq:IM1}\\
  & \mu([a,b])= \Lie_{\rho(a)}\mu(b) - i_{\rho(b)}d\mu(a),\label{eq:IM2}
\end{align}
for all $a,b \in \Gamma(A)$; such map $\mu$ is called a (closed, degree $k+1$) {\em IM-form} on the Lie algebroid $A$. The key relation between $\omega$ and $\mu$ is
\begin{equation}\label{eq:rel}
  i_{a^r}\omega = \tar^* \mu(a),
\end{equation}
for all $a\in \Gamma(A)$, where $a^r$ is the vector field on $\gpd$ obtained by right translation of $a$.
Conversely, it is proven in \cite{AC,BC1} that, if $\gpd$ is source-simply-connected, this correspondence between closed multiplicative forms on $\gpd$ and closed IM-forms on $A$ is a bijection; i.e.,
any closed (degree $n$) IM-form $\mu$ on $A$ as above defines a unique closed, multiplicative form $\omega\in \Omega^{n}(\gpd)$ satisfying \eqref{eq:rel}.

The first step to ``integrate'' a higher Dirac structure $L\subset TM+ \wedge^k T^*M$ is noticing that, once we view $L\to M$ as a Lie algebroid, the projection on the second factor $\pr_2|_L: L \to \wedge^k T^*M$ is a closed IM-form (condition \eqref{eq:IM1} amounts to $L$ being isotropic, while \eqref{eq:IM2} is equivalent to its integrability with respect to the Courant bracket). Note that the fact that $L$ is weakly lagrangian imposes an additional condition on this IM-form, namely, 
$$
\Ima(\pr_2|_L)^\circ  = \ker(\pr_2|_L),
$$
which comes from $L^\perp\cap TM = L\cap TM$. More generally, for a Lie algebroid $A\to M$, any closed IM-form $\mu: A \to \wedge^k T^*M$
such that
\begin{align}
  &\Ima(\mu)^\circ = \rho(\ker(\mu)),\label{eq:extracond}\\
  &\ker(\mu)\cap \ker(\rho) =\{0\}, \label{eq:ker0}
\end{align}
defines a higher Dirac structure $L\subset TM+ \wedge^k T^*M$ as the image of the map
$$
\rho+\mu: A \to TM+ \wedge^k T^*M,
$$
in such a way that $\rho+\mu$ defines a Lie-algebroid isomorphism from $A$ to $L$ (in this sense, these IM-forms are thought of as {\em equivalent}).

% no need of  source-connected
\begin{lemma}\label{lem:higherdirac}
  Let $\gpd$ be a Lie groupoid with Lie algebroid $A$, let $\omega \in \Omega^{k+1}(\gpd)$ be a closed multiplicative form and $\mu: A \to \wedge^k T^*M$ be the corresponding IM-form. Then
  \begin{itemize}
  \item[(a)] condition \eqref{eq:ker0} holds if and only if $\ker(\omega)\cap\ker(d\sour)\cap\ker(d\tar)=\{0\}$
    at all points of $\gpd$,
  \item[(b)] condition \eqref{eq:extracond} holds if and only if $d_g\tar(\ker(\omega)\cap \ker(d\sour)) = (\ker(\omega)\cap TM) |_{\tar(g)}$ for all $g\in \gpd$.
  \end{itemize}
\end{lemma}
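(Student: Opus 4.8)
The plan is to reduce both statements to fibrewise identities over $M$ and then propagate them to all of $\gpd$ using that $\tar$ is a surjective submersion. The two structural inputs I would lean on are the defining relation \eqref{eq:rel}, $i_{a^r}\omega = \tar^*\mu(a)$, together with the standard facts that $a\mapsto a^r_g$ is a linear isomorphism $A_{\tar(g)}\to \ker(d_g\sour)$ and that right-invariant fields are $\tar$-projectable, $d_g\tar(a^r_g)=\rho(a)_{\tar(g)}$. Since every $x\in M$ is of the form $\tar(g)$, it suffices to verify the corresponding statements over each $x=\tar(g)$ and then let $g$ range over $\gpd$.

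For (a), fix $g$ and write a vector of $\ker(d_g\sour)$ as $a^r_g$ for a unique $a\in A_{\tar(g)}$. Then $a^r_g\in\ker(d_g\tar)$ iff $\rho(a)=0$, while \eqref{eq:rel} gives $i_{a^r_g}\omega_g=(d_g\tar)^*\mu(a)_{\tar(g)}$; as $d_g\tar$ is onto, its transpose is injective, so $a^r_g\in\ker(\omega_g)$ iff $\mu(a)=0$. Hence $\ker(\omega_g)\cap\ker(d_g\sour)\cap\ker(d_g\tar)$ is the image under $a\mapsto a^r_g$ of the fibre $(\ker(\mu)\cap\ker(\rho))_{\tar(g)}$, and is trivial precisely when that fibre is. Surjectivity of $\tar$ turns ``trivial for every $g$'' into \eqref{eq:ker0}.

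For (b), the same computation yields $\ker(\omega_g)\cap\ker(d_g\sour)=\{a^r_g : a\in\ker(\mu)_{\tar(g)}\}$, so $d_g\tar$ of this space is exactly $\rho(\ker(\mu))_{\tar(g)}$. The crux is then the pointwise identity $(\ker(\omega)\cap TM)_x=\Ima(\mu)^\circ_x$ for all $x\in M$. To prove it I would use the splitting $T_x\gpd=T_xM\oplus A_x$ (from the unit section and $A=\ker(d\sour)|_M$) and evaluate $i_X\omega_x$, for $X\in T_xM$, on decomposed arguments. Any term carrying an $A_x$-entry can be rewritten, after moving that entry to the front and applying \eqref{eq:rel}, as $\pm(i_X\mu(a))(\dots)$, while the single remaining term with all arguments in $T_xM$ vanishes because the pullback $u^*\omega$ of a multiplicative form by the unit section $u\colon M\hookrightarrow\gpd$ is zero. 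Thus $i_X\omega_x=0$ iff $i_X\mu(a)=0$ for all $a$, i.e. iff $X\in\Ima(\mu)^\circ$. Combining the two computations, the displayed equality in (b) reads $\rho(\ker(\mu))_{\tar(g)}=\Ima(\mu)^\circ_{\tar(g)}$, which holds for all $g$ iff it holds over all of $M$, namely \eqref{eq:extracond}.

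The main obstacle I anticipate is precisely the crux identity $(\ker(\omega)\cap TM)_x=\Ima(\mu)^\circ_x$: a priori the vanishing of $i_X\omega_x$ constrains the higher components of $\omega_x$ carrying several $A_x$-entries, which are not directly recorded by $\mu$. What makes the argument go through is that \eqref{eq:rel} evaluated at unit points recovers all such components from $\mu$ and $\rho$, so contracting any of them with $X\in T_xM$ collapses to $i_X\mu$, while multiplicativity ($u^*\omega=0$) eliminates the one component invisible to $\mu$. Establishing $u^*\omega=0$ and handling the signs in the reordering are the only delicate points; the rest is formal.
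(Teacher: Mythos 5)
Your proof is correct and takes essentially the same route as the paper's: right-translation identifies $\ker(d_g\sour)$ with $A_{\tar(g)}$, and \eqref{eq:rel} together with surjectivity of $d_g\tar$ reduces both conditions to the fibrewise statements about $\mu$ and $\rho$. The only difference is that you prove the key identity $\ker(\omega)\cap TM=\Ima(\mu)^\circ$ directly (and correctly, via the splitting $T_x\gpd=T_xM\oplus A_x$, relation \eqref{eq:rel}, and the vanishing of the pullback of $\omega$ by the unit section), whereas the paper invokes this same fact by citing \cite[Prop.~1]{BCI}.
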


\begin{proof}
  To verify (a), note that any $X\in \ker(d\sour)|_g$ is the right-translation by $g$ of an element $a \in A|_{\tar(g)}$, so we see from \eqref{eq:rel} that $X \in \ker(\omega)$ if and only if $a\in \ker(\mu)$. Also, $d\tar(X)=d\tar(a^r)=\rho(a)$ (as a consequence of $\tar\circ R_g = \tar$, where $R_g(h)=hg$), so it follows that $X\in \ker(\omega)\cap\ker(d\sour)\cap\ker(d\tar)$ if and only if $X=a^r$ and $a\in \ker(\mu)\cap \ker(\rho)$. This proves (a).

  For (b), it follows from similar arguments that  $d_g\tar(\ker(\omega)\cap \ker(d\sour))=\rho(\ker(\mu))|_{\tar(g)}$. On the other hand, using the fact that the pullback of any multiplicative form by the identity section $M\to \gpd$ vanishes, we see that $\ker(\omega)\cap TM = \Ima(\mu)^\circ$ (cf. \cite[Prop.~1]{BCI}), so (b) follows.
\end{proof}

The previous lemma suggests the following notion of higher presymplectic groupoid:

\begin{definition}
  A {\em $k$-presymplectic groupoid} is a Lie groupoid $\gpd\rightrightarrows M$ equipped with a closed, multiplicative form $\omega\in \Omega^{k+1}(\gpd)$ satisfying, for all $g\in \gpd$,
  \begin{itemize}
  \item[(a)] $(\ker(\w)\cap \ker(d\tar)\cap \ker(d\sour))|_{g}=\{0\}$,
  \item[(b)] $d_g\tar(\ker (\w)\cap \ker(d\sour))=(\ker (\w)\cap TM)|_{\tar(g)}$.   \end{itemize}
\end{definition}

The special case of non-degenerate $\omega$ corresponds to
the {\em multisymplectic groupoids} of \cite{BCI}.
For $k=1$, the previous definition boils down to the presymplectic groupoids of \cite{BCWZ} (in this case,
condition (b) can be replaced by the dimension condition $\mathrm{dim}(\gpd) = 2 \, \mathrm{dim}(M)$, see \cite[Cor.~4.8]{BCWZ}).

%\newpage

\begin{theorem}\ \label{thm:integration of HD}
  \begin{itemize}
  \item[(a)] Let $(\gpd \rightrightarrows M,\w)$ be a $k$-presymplectic groupoid with Lie algebroid $A$. Then
    $M$ inherits a higher Dirac structure $L\subset TM+ \wedge^kT^*M$, naturally isomorphic to $A$ as a Lie algebroid.
  \item[(b)] Let $L \subset TM+ \wedge^k T^*M$ be a higher Dirac structure on $M$
    whose underlying Lie algebroid is integrable, and let $\gpd$ be a source-simply-connected Lie groupoid integrating it. Then $\gpd$ carries a unique closed form $\omega\in \Omega^{k+1}(\gpd)$ making it into a
    $k$-presymplectic groupoid and satisfying, for any $a=X+\alpha \in \Gamma(L)$,
    $$
    i_{a^r}\omega = \tar^*\alpha.
    $$
  \end{itemize}
\end{theorem}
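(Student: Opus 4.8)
The plan is to treat the two directions as passages across the bijective correspondence between closed multiplicative forms on $\gpd$ and closed IM-forms on $A$ established in \cite{AC,BC1}, using Lemma~\ref{lem:higherdirac} to match the geometric conditions on each side. Since all the heavy machinery has been set up, the proof is essentially an assembly.

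For part (a), I would start from the closed multiplicative form $\omega$ and pass to its associated IM-form $\mu:A\to\wedge^k T^*M$ defined by \eqref{eq:mu}, which automatically satisfies \eqref{eq:IM1} and \eqref{eq:IM2}. By Lemma~\ref{lem:higherdirac}, the two $k$-presymplectic conditions (a) and (b) on $(\gpd,\omega)$ are precisely equivalent to \eqref{eq:ker0} and \eqref{eq:extracond} for $\mu$. The discussion preceding Lemma~\ref{lem:higherdirac} then applies verbatim: as $\mu$ is a closed IM-form satisfying \eqref{eq:extracond} and \eqref{eq:ker0}, the map $\rho+\mu:A\to TM+\wedge^k T^*M$ is fiberwise injective, its kernel being $\ker(\rho)\cap\ker(\mu)=\{0\}$ by \eqref{eq:ker0}; hence $L:=\Ima(\rho+\mu)$ is a subbundle of rank equal to that of $A$, it is weakly lagrangian by \eqref{eq:extracond}, it is integrable because \eqref{eq:IM2} is equivalent to Courant-involutivity, and $\rho+\mu$ is a Lie-algebroid isomorphism onto $L$.

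For part (b), I would run the correspondence backwards. The projection $\mu:=\pr_2|_L:L\to\wedge^k T^*M$ is a closed IM-form on the Lie algebroid $L$: condition \eqref{eq:IM1} is the isotropy of $L$ and condition \eqref{eq:IM2} is its Courant-integrability. Since $\gpd$ is source-simply-connected, the integration theorem of \cite{AC,BC1} yields a \emph{unique} closed multiplicative form $\omega\in\Omega^{k+1}(\gpd)$ obeying \eqref{eq:rel}, i.e.\ $i_{a^r}\omega=\tar^*\mu(a)=\tar^*\alpha$ for $a=X+\alpha\in\Gamma(L)$; this is exactly the normalization required in the statement, and uniqueness of $\omega$ is inherited from that theorem. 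It then remains to check that $\omega$ makes $\gpd$ into a $k$-presymplectic groupoid, which by Lemma~\ref{lem:higherdirac} reduces to verifying \eqref{eq:ker0} and \eqref{eq:extracond} for $\mu=\pr_2|_L$. Here the bookkeeping is straightforward: $\ker(\mu)=L\cap TM$ and $\ker(\rho)=A_L=L\cap\wedge^k T^*M$ meet only in $\{0\}$, so \eqref{eq:ker0} is automatic; and $\Ima(\mu)^\circ=\pr_2(L)^\circ$ while $\rho(\ker(\mu))=L\cap TM$, so \eqref{eq:extracond} reduces to the weakly lagrangian identity \eqref{eq:wlag}, which holds because $L$ is a higher Dirac structure.

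The genuinely hard work is all imported: the engine is the source-simply-connected integration of closed IM-forms to closed multiplicative forms of \cite{AC,BC1}, and the pointwise matching of conditions is Lemma~\ref{lem:higherdirac}. Consequently the only delicate point in the argument proper is the precise identification of the kernels and images of $\rho$ and $\mu$, so that the abstract conditions \eqref{eq:ker0}--\eqref{eq:extracond} line up exactly with isotropy, with Courant-involutivity, and with the weakly lagrangian condition; once these identifications are pinned down, both directions follow by assembling the cited results.
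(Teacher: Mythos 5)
Your proposal is correct and takes essentially the same route as the paper's own (very terse) proof: part (a) passes from $\omega$ to the IM-form $\mu$ of \eqref{eq:mu} and invokes Lemma~\ref{lem:higherdirac} together with the discussion preceding it, while part (b) integrates the closed IM-form $\pr_2|_L$ via the bijection of \cite{AC,BC1} and applies Lemma~\ref{lem:higherdirac} again. The kernel/image identifications you spell out ($\ker(\mu)=L\cap TM$, $\ker(\rho)=A_L$, so that \eqref{eq:ker0} is automatic and \eqref{eq:extracond} is the weakly lagrangian condition \eqref{eq:wlag}) are exactly the bookkeeping the paper leaves implicit.
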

% \;\;\;\; \textrm{ for all }\, a=X+\alpha \in \Gamma(L).

\begin{proof}
  For part (a), note that the IM-form $\mu$ associated with $\omega$ as in \eqref{eq:mu} satisfies \eqref{eq:extracond} and \eqref{eq:ker0} by Lemma~\ref{lem:higherdirac}. So it defines a higher Dirac structure $L$ on $M$ as the image of the map $\rho + \mu: A \to TM+ \wedge^k T^*M$, which is itself a Lie-algebroid isomorphism onto $L$. Part (b) follows from the integration of the closed IM-form $\pr_2|_L: L\to \wedge^k T^*M$ and
  Lemma~\ref{lem:higherdirac}.
\end{proof}

\begin{remark}
  For a higher presymplectic groupoid $(\gpd,\w)$ with induced higher Dirac structure $L = \{\rho(u)+\mu(u), \, u\in A\}$ on $M$, it is a direct verification that condition \eqref{eq:rel} implies that $L$ can be alternatively written as
  $$
  L |_{\tar(g)}=\{ X+\alpha \,|\, X=d\tar(Z), i_Z\omega=\tar^*\alpha \textrm{ for some }\, Z \in T_g\gpd\},
  $$
  for all $g\in \gpd$. This is a natural generalization of the fact that, for $k=1$, the higher Dirac structure $L$ is characterized by $\tar$ being a (forward) Dirac map (see \cite{BCWZ}).
\end{remark}

Considering the natural notions of isomorphism between higher presymplectic group\-oids and higher Dirac structures, one can directly check that the construction in Theorem~\ref{thm:integration of HD} is functorial, and in fact leads to an equivalence of categories, for each $k$, between source-simply-connected higher presymplectic groupoids and  higher Dirac structures whose underlying Lie algebroids are integrable.

The previous theorem recovers the correspondence between multisymplectic group\-oids and higher Poisson structures
of \cite[Sec.~4]{BCI}, where one can find various explicit examples.
At the other extreme, one has higher Dirac structures $L$ satisfying $L\cap \wedge^kT^*M=\{0\}$,  as in Example~\ref{ex:HD-mnfd}.

\begin{example}
When $L$ is a higher Dirac structure such that $L\cap \wedge^kT^*M=\{0\}$,
$E$ is an integrable subbundle of $TM$,
  and $L$ is isomorphic to it as a Lie algebroid. With the identification $L\cong E$, the closed IM-form $\pr_2|_L:L\to \wedge^k T^*M$ is identified with
  the $E$-skew form $\varepsilon: E\to \wedge^k T^*M$ defining $L$.
  It follows that a source-simply-connected Lie groupoid integrating $L$ is identified with the monodromy groupoid $\gpd(E)\rightrightarrows M$ of the foliation tangent to $E$, i.e., the Lie groupoid defined by paths on the leaves of $E$ up to leafwise homotopy \cite{MM} (when $E=TM$, this is just the fundamental groupoid of $M$). The multiplicative $(k+1)$-form $\omega$ on $\gpd(E)$ making into a $k$-presymplectic groupoid can be obtained as follows. Let $\widehat{\varepsilon} \in \Omega^{k+1}(M)$ be an extension of $\varepsilon: E\to \wedge^k T^*M$ (which always exists). We have an induced multiplicative $(k+1)$-form on the pair groupoid $M\times M$ given by $p_1^*\widehat{\varepsilon} -  p_2^*\widehat{\varepsilon}$ (where $p_i: M\times M \to M$ are the natural projections), and a groupoid morphism $(\tar,\sour):\gpd(E)\to M\times M$, defined by source and target maps on $\gpd(E)$ (i.e., initial and end points of paths). One can directly verify that
  $$
  \omega = (\tar,\sour)^* (p_1^*\widehat{\varepsilon} - p_2^*\widehat{\varepsilon}),
  $$
  which only depends on $\varepsilon$, not on the chosen extension.

\end{example}

More generally, one can describe Lie groupoids integrating regular higher Dirac structures, generalizing \cite[Sec.~8.4]{BCWZ}. The discussion is actually valid for general regular, isotropic subbundles.

\begin{example}
Let $L\subset TM+\wedge^k T^*M$ be an integrable, regular, isotropic subbundle, determined by $E$, $A_L$ and $\varepsilon$. As a Lie algebroid, $L$ may be seen as an abelian extension
$$
0\to A_L \to L \to E \to 0,
$$
where $A_L$ carries a representation of $E$ by Lie derivatives. Splitting this sequence is equivalent to picking a lift $\tve\in \Gamma(E^*\otimes \wedge^k\TM)$ of $\ve$, which allows us to identify $L$, as a vector bundle, with the direct sum $(E+ A_L) \to M$: explicitly,
the isomorphism is given by $X+\alpha\mapsto X+\tve(X)+\alpha\in L$. The induced Lie bracket on $\Gamma(E+ A_L)$ is given by
$$
[ X+\alpha,Y+\beta] = [X,Y] + \Lie_X \beta - \Lie_Y \alpha + c(X,Y),
$$
for the cocycle
$$
c(X,Y):= \Lie_X \tve(Y) - i_Y d\tve(X) -\tve([X,Y]) \in\Omega^2(E,A_L).
$$
(The class $[c]\in H^2(E,A_L)$ is independent of the choice of the lift $\tve$ and determines the isomorphism class of the extension.) Hence, as a Lie algebroid, $L$ is isomorphic to a twisted semidirect product $E\ltimes_c A_L$.
It follows that, if the cocycle $c$ integrates to a groupoid cocycle $\widetilde{c}$ on the monodromy groupoid $\gpd(E)$, see \cite[Sec.~8]{BCWZ} and \cite{Cra},  the twisted semi-direct product $\gpd(E)\ltimes_{\widetilde{c}}A_L$ is a Lie groupoid integrating $L$. A description of the $k$-presymplectic form can be adapted from \cite[Cor.~8.6]{BCWZ} for $[c]=0$.
\end{example}

More on the topic of integration of higher Dirac structures can be found in \cite[Sec.~4.3.1]{Martz-th}.

\noindent{\bf Acknowledgments}: This project has had the financial support of CNPq and FAPERJ (H.B.),
CAPES (R.R.) and the {\em Ci\^encias sem Fronteiras} program (N.M.A.), sponsored by CNPq. We thank Juan Carlos Marrero and Marco Zambon for useful comments.

\begin{appendices}

  \section{Alternative notions to lagrangian subspace} \label{sec:App}
  % Section \ref{sec:high-dirac-struct},

  As mentioned in Section~\ref{sec:intro}, Dirac structures on a vector space $V$  are defined as lagrangian subspaces $L\subset V+V^*$, i.e., (C1) $L=L^\perp$, and alternatively defined by
  $$ \textrm{(C2) } L\subseteq L^\perp, \textrm{ and } L\cap V=\pr_2(L)^\circ \textrm{ or } \Ann(L\cap TM)=\pr_2(L).$$
  In \cite{Co} we find a third equivalent way to define such subspaces as
  $$ \textrm{(C3) } L\subseteq L^\perp, \textrm{ and } \pr_1(L)=(L\cap V^*)^\circ \textrm{ or } \Ann(\pr_1(L))=L\cap V^*.$$

  In $V+\wedge^kV^*$, for $k\geq 2$, these notions are not equivalent anymore, and, moreover, $E=\pr_2(L)^\circ$ is not equivalent to $\Ann(E)=\pr_2(L)$. We thus have four alternative ways to extend the notion of lagrangian subspace for $k\geq 2$. We list them using the notation $E=\pr_1(L)$, $A_L=L\cap \wedge^k V^*$:
  \begin{enumerate}
  \item[(C2w)] $L\subseteq L^\perp$ and
    $L\cap V=\pp(L)^\circ$, %$L\cap V = \pp(L)^o$, i.e.,
  \item[(C2s)] $L\subseteq L^\perp$ and
    $\Ann(L\cap V)=\pp(L)$. %$\Ann(L\cap V) = \pp(L)$, i.e.,
  \item[(C3w)] $L\subseteq L^\perp$ and
    $E=A_L^\circ$, %$\rho(L) = (L\cap \wedge^k V^*)^o$, i.e.,
  \item[(C3s)] $L\subseteq L^\perp$ and $\Ann(E)=A_L$, %and $\Ann(\rho(L)) = L\cap \wedge^k V^*$,
  \end{enumerate}
  We have defined weakly lagrangian as (C2w) in Definition~\ref{def:weakly-lagrangian} and already checked that it is not equivalent to lagrangian in Example~\ref{ex:lHP1}. From (\ref{eq:lag}) and (\ref{eq:wlag}) we have the following result.

  \begin{lemma}\label{prop:general case}
    A lagrangian subspace $L\subset V+\wedge^kV^*$ satisfies {\normalfont (C2w)} and {\normalfont (C3s)}.
  \end{lemma}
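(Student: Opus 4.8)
The plan is to observe that, once we have $L=L^\perp$ in hand, both (C2w) and (C3s) are immediate consequences of identities already recorded in the text, so the whole argument is a matter of assembling known equalities rather than a genuine computation. Since $L$ is lagrangian we have $L=L^\perp$, and in particular $L\subseteq L^\perp$, which is the common hypothesis appearing in both (C2w) and (C3s); it therefore only remains to verify the two annihilator identities.

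For (C3s) I would simply invoke \eqref{eq:lag}. Indeed, that equation was established precisely for lagrangian subspaces, where one has
$$
A_L = L\cap \wedge^k V^* = L^\perp\cap \wedge^k V^* = \Ann(E),
$$
the middle equality coming from $L=L^\perp$ and the last from the fact, noted in the discussion of isotropic subspaces, that $\Ann(E)=L^\perp\cap \wedge^k V^*$. Thus $\Ann(E)=A_L$, which is exactly condition (C3s).

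For (C2w) I would use that the identity $\pp(L)^\circ = L^\perp\cap V$ holds for \emph{any} isotropic subspace (this is the second equality in \eqref{eq:wlag}, which, as remarked there, always holds; it was also recorded in the isotropic subspaces subsection). Combining this with $L=L^\perp$ gives
$$
L\cap V = L^\perp\cap V = \pp(L)^\circ,
$$
which is precisely condition (C2w). I do not anticipate any real obstacle here: the only point deserving care is the ``always holds'' equality $\pp(L)^\circ = L^\perp\cap V$, but this is a direct unwinding of the definition of the pairing \eqref{eq:pairing} and has already been justified earlier, so nothing beyond bookkeeping is needed.
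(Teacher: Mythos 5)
Your proposal is correct and matches the paper's own justification, which derives the lemma directly from \eqref{eq:lag} (giving $A_L=\Ann(E)$, i.e.\ (C3s)) and from the always-valid identity $\pp(L)^\circ = L^\perp\cap V$ in \eqref{eq:wlag} combined with $L=L^\perp$ (giving (C2w)). Nothing is missing; the paper itself treats this as an immediate assembly of these two recorded facts.
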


   Let us first consider standard isotropic subspaces $L(E,A_L,\varepsilon)$, for which we have $\Ann(E)^\circ=E$, whereas we do not necessarily have $\Ann(\pp(L)^\circ)=\pp(L)$, just $\pp(L)\subseteq \Ann(\pp(L)^\circ)$. Thus, $E=\pp(L)^\circ$ is weaker than $\Ann(E)=\pp(L)$. This, together with the following result, justifies the use of w and s above for weak and strong.

  % In the following we will study the defining conditions in light of the dimensional constraint.
  % \subsubsection*{Weaker notion with dimensional constraint:}

  \begin{proposition}\label{prop:hierarchy with DC}
    For standard isotropic subspaces of $V+\wedge^k V^*$, with $k\geq 2$, we have the hierarchy of different notions
\begin{center}
    \begin{tikzpicture}
      \matrix (m) [matrix of math nodes,row sep=0em,column sep=4em,minimum
      width=2em]
      { & & & \textnormal{(C2w)} \\
        \textnormal{(C2s)} & \textnormal{(C1)}\cong \textnormal{(C3s)} & \textnormal{(C2w)+(C3w)} & \\
        & & & \textnormal{(C3w)} \\};
      \path[-stealth]
      (m-2-1) edge (m-2-2)
      (m-2-2) edge (m-2-3)
      (m-2-3) edge (m-1-4)
      (m-2-3) edge (m-3-4);
    \end{tikzpicture} \end{center}
    % $$
    % \begin{diagram}
    %   \node{} \node{}\node{}\node{(C3w)}\\
    %   \node{(C2s)}\arrow{e} \node{(C1)\equiv (C3s)}\arrow{e}\node{(C3w)+(C2w)}\arrow{ne}\arrow{se}\node{}\\
    %   \node{} \node{}\node{}\node{(C2w),}
    % \end{diagram}
    % $$
\noindent where  \textnormal{(C1)} corresponds to lagrangian and \textnormal{(C2w)} to weakly lagrangian subspaces.
  \end{proposition}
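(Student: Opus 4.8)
The plan is to verify the four arrows of the diagram in turn. The equivalence $\textnormal{(C1)}\cong\textnormal{(C3s)}$ is precisely Lemma~\ref{lem:Lagrangian sbspace}, and the two rightmost arrows are trivial, since a conjunction implies each of its conjuncts. For $\textnormal{(C1)}\Rightarrow\textnormal{(C2w)}+\textnormal{(C3w)}$ the first conjunct is exactly Lemma~\ref{prop:general case}, while for the second I would combine $\textnormal{(C3s)}$ (which holds because $L$ is lagrangian) with standardness: from $A_L=\Ann(E)$ and $\Ann(E)^\circ=E$, using \eqref{eq:doubleAnn-standard}, one reads off $A_L^\circ=E$, which is $\textnormal{(C3w)}$. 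This leaves the core implication $\textnormal{(C2s)}\Rightarrow\textnormal{(C1)}$, which I would carry out in two stages.

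\emph{Stage one} shows $\textnormal{(C2s)}\Rightarrow\textnormal{(C2w)}$. Writing $W=L\cap V$, the inclusion $W\subseteq\pp(L)^\circ$ is automatic from isotropy, so it suffices to prove $\pp(L)^\circ\subseteq W$. Condition $\textnormal{(C2s)}$ reads $\pp(L)=\Ann(W)$, hence $\pp(L)^\circ=\Ann(W)^\circ$, and I would argue that standardness forces $W$ to obey the dimension constraint \eqref{eq:dim constraint}, so that $\Ann(W)^\circ=W$ by \eqref{eq:doubleAnn-standard}. The point is a dichotomy: were $n>\dim W>n-k$, then $\Ann(W)=\{0\}$, so $\pp(L)=\{0\}$ and $L\subseteq V$, forcing $L=E=W$; but then $\dim E$ violates \eqref{eq:dim constraint}, contradicting standardness of $L$. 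Therefore $\pp(L)^\circ=W$, i.e. $\textnormal{(C2w)}$ holds.

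\emph{Stage two} upgrades $\textnormal{(C2w)}$ to lagrangian by comparison with the canonical lagrangian hull $L_0=L(E,\Ann(E),\varepsilon_0)\supseteq L$, where $\varepsilon_0$ is $\varepsilon$ followed by the projection $\wedge^kV^*/A_L\to\wedge^kV^*/\Ann(E)$; this $L_0$ satisfies $\pr_1(L_0)=E=\pr_1(L)$ and is lagrangian by Lemma~\ref{lem:Lagrangian sbspace}, hence also satisfies $\textnormal{(C2w)}$. From $\pp(L)\subseteq\pp(L_0)$ together with the two $\textnormal{(C2w)}$ identities I obtain $W=\pp(L)^\circ\supseteq\pp(L_0)^\circ=L_0\cap V$, which with $L\subseteq L_0$ gives $W=L_0\cap V$. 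Now isotropy of $L_0$ yields $\pp(L_0)\subseteq\Ann(L_0\cap V)=\Ann(W)$, while $\textnormal{(C2s)}$ gives $\Ann(W)=\pp(L)\subseteq\pp(L_0)$; hence $\pp(L)=\pp(L_0)$. Since $\dim L=\dim\pp(L)+\dim W$ and likewise for $L_0$, the two subspaces have equal dimension, so $L\subseteq L_0$ forces $L=L_0$; thus $L$ is lagrangian.

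The main obstacle is exactly this passage from $\textnormal{(C2w)}$ to full lagrangianity: weak-lagrangianity alone does not determine $A_L$ inside $\Ann(E)$, and it is the extra strength of $\textnormal{(C2s)}$---forcing $\pp(L)$ to be as large as $\pp(L_0)$---that closes the gap. To confirm that the hierarchy is strict one may finally invoke separating examples: a nondegenerate $\omega\in\wedge^{k+1}V^*$ gives a lagrangian $\mathrm{gr}(\omega)$ with $\dim\pp(L)=n\neq\binom{n}{k}=\dim\Ann(L\cap V)$, so $\textnormal{(C2s)}$ fails; and a subspace $E+A_L$ with $A_L\subsetneq\Ann(E)$ yet $A_L^\circ=E$ (cf.\ Example~\ref{ex:E}) satisfies $\textnormal{(C2w)}+\textnormal{(C3w)}$ without being lagrangian.
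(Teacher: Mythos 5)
Your implications are all correct, and your route through the key step (C2s)$\Rightarrow$(C1) is genuinely different from the paper's. The paper argues directly in three lines: given $X+\alpha\in L^\perp$, pairing against $L\cap V$ gives $\alpha\in\Ann(L\cap V)=\pp(L)$ by (C2s), so $X'+\alpha\in L$ for some $X'$, and then $X-X'\in L^\perp\cap V=\pp(L)^\circ=L\cap V\subseteq L$ using (C2s)$\Rightarrow$(C2w); hence $L^\perp\subseteq L$. You instead pass through the lagrangian hull $L_0=L(E,\Ann(E),\varepsilon_0)$ from the paper's remark after Example~\ref{ex:E} and close the gap with the dimension count $\dim L=\dim\pp(L)+\dim(L\cap V)$. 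This is longer but conceptually transparent: it isolates exactly how the extra strength of (C2s) forces $\pp(L)$ to saturate $\pp(L_0)$. Your stage one also deviates from the paper: there, the argument reduces to $E=V$, where $A_L=\{0\}$ and $L$ is the graph of a $(k+1)$-form whose kernel, when nonzero, has dimension at most $n-(k+1)$; your dichotomy (if $n>\dim W>n-k$ then $\Ann(W)=\{0\}$, so (C2s) forces $\pp(L)=\{0\}$ and $L=E=W$, contradicting \eqref{eq:dim constraint}) exploits (C2s) itself rather than the kernel bound, and is arguably cleaner. The remaining arrows ((C1)$\cong$(C3s) via Lemma~\ref{lem:Lagrangian sbspace}, (C2w) via Lemma~\ref{prop:general case}, (C3w) via $A_L^\circ=\Ann(E)^\circ=E$ from \eqref{eq:doubleAnn-standard}) coincide with the paper's.

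Two caveats on your strictness discussion. First, your claim that a nondegenerate $\omega\in\wedge^{k+1}V^*$ gives $\dim\pp(L)=n\neq\binom{n}{k}$ fails when $k=n-1$: there $\omega$ is a volume form, $\omega^\flat\colon V\to\wedge^{n-1}V^*$ is an isomorphism, so $\pp(L)=\wedge^{n-1}V^*=\Ann(L\cap V)$ and $\mathrm{gr}(\omega)$ actually \emph{does} satisfy (C2s) --- consistently with the paper's subsequent proposition, since volume forms are decomposable. You should restrict to $2\leq k\leq n-2$ and exhibit a nondegenerate, nondecomposable form, e.g.\ the paper's $e^1\wedge e^2\wedge e^3+e^1\wedge e^4\wedge e^5$ on $\mathbb{R}^5$. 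Second, you omit examples showing that (C2w) and (C3w) are independent of each other (the paper's first two bullet examples), which the paper counts as part of completing the proof; without them, the two rightmost arrows are not shown to be non-reversible.
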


  \begin{proof}
  We clearly have the implication (C3s)$\to$(C3w). To see that (C2s)$\to$(C2w), as $L\cap V\subseteq
  E$, we just have to check that for $E=V$, we have $\dim(L\cap V)\leq n-k$ or $\dim (L\cap V)=n$.
  Indeed, for $E=V$, we have $A_L=\{0\}$, so $L=L(V,0,\varepsilon)$ is the graph of a
  $(k+1)$-form $\varepsilon$. When $\varepsilon=0$, we have $L\cap V=V$. When $\varepsilon\neq 0$, as the kernel of a form is of maximal dimension when the form is decomposable,  we get $\dim(L\cap V)=\dim(\ker(\varepsilon))\leq n-(k+1)$, as $\varepsilon$ is a $k+1$-form.

 Note that (C3s) is equivalent to (C1) by Lemma \ref{lem:Lagrangian sbspace}. We also have that (C2s) implies lagrangian. Indeed, for $X+\alpha\in L^\perp$, we have $\alpha\in \Ann(L\cap V)=\pp(L)$, so there is $X'+\alpha\in L$.
    Their difference is $X - X' \in L^\perp\cap V=\pr_2(L)^\circ$, which is $L \cap V$ as (C2s)$\to$(C2w), so
    $X +\alpha \in L$.  The proof is completed by the examples below.
  \end{proof}

  \begin{example}
    The following standard isotropic subspaces show that the notions above are different. We take standard and dual bases $\{e_i\}$ and $\{e^j\}$ of $\RR^n$ and $(\RR^n)^*$, respectively.
    \begin{itemize}
    \item In $\RR^4 + \wedge^2 (\RR^4)^*$, the subspace $\textup{span}\{e_4 + e^1\wedge e^2, e^1\wedge e^3, e^2\wedge e^3 \}$ satisfies (C3w) but not (C2w), hence not (C2s) or (C3s).
    \item In $\RR^6 + \wedge^2 (\RR^6)^*$, the subspace $\textup{span}\{ e_1,e_2+e^3\wedge e^4, e_3 -e^2\wedge e^4, e^5\wedge e^6\}$ satisfies (C2w) but not (C3w), hence not (C2s) or (C3s).
      % \item $L=\{a(e_1+e_2)+be_5+(ae^2-ae^1)\wedge e^3\wedge e^4 \st b\in \RR\}\subset \RR^5+\wedge^3(\RR^5)^*$ satisfies (C2w) but not (C3w).%see example 4.1.3 in thesis
    \item For a proper subspace $S\subset \wedge^k(\RR^n)^*$ such that $S^\circ=\{0\}$ we have that $L=S$ satisfies (C3w)+(C2w) but not (C1). %$L^\perp=0+\wedge^k(\RR^n)^*$.
    \item In $\RR^5+ \wedge^2 (\RR^5)^*$, the graph of $e^1\wedge e^2 \wedge e^3+ e^1\wedge e^4 \wedge e^5$ satisfies (C3s), but not (C2s). %\item $L=gr(e^1\wedge e^2\wedge e^3)$ in $\RR^4$ is (C3s), indeed is Lagrangian, but does not satisfy (C2s)
    \end{itemize}
  \end{example}

  We finish the study of the standard case by describing (C2s) in more detail.

  \begin{proposition}
    In the standard case, the condition \textnormal{(C2s)} corresponds to the graph of decomposable
    $(k+1)$-forms on $V$ and to the subspaces of the form $E + \Ann(E)$ for $\dim(E)\leq n-k$.
  \end{proposition}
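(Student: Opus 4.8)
The plan is to reduce to the lagrangian case and then split according to the two standard alternatives, $\dim(E)=n$ and $\dim(E)\leq n-k$. Since the proof of Proposition~\ref{prop:hierarchy with DC} shows that \textnormal{(C2s)} implies \textnormal{(C1)}, any standard \textnormal{(C2s)} subspace $L=L(E,A_L,\varepsilon)$ is lagrangian, so Lemma~\ref{lem:Lagrangian sbspace} gives $A_L=\Ann(E)$ and we may write $L=L(E,\Ann(E),\varepsilon)$, with $L\cap V=\ker(\varepsilon)\subseteq E$, $\Ann(E)\subseteq\pp(L)$, and $\pp(L)/\Ann(E)\cong\Ima(\varepsilon)$. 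I would first record a general fact valid for any isotropic $L$: from $L\cap V\subseteq \pp(L)^\circ$ (a consequence of $L\subseteq L^\perp$) one gets the chain $\pp(L)\subseteq \Ann(\pp(L)^\circ)\subseteq \Ann(L\cap V)$, so the inclusion $\pp(L)\subseteq\Ann(L\cap V)$ always holds. Hence \textnormal{(C2s)}, i.e. $\Ann(L\cap V)=\pp(L)$, is equivalent to the numerical identity $\dim\pp(L)=\dim\Ann(L\cap V)$, and I can run both directions of the equivalence as a single dimension count, using $\dim\Ann(W)=\binom{n-\dim W}{k}$ for a subspace $W\subseteq V$.

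For $\dim(E)=n$ we have $\Ann(E)=\{0\}$ and $L=\mathrm{gr}(\omega)$ for $\omega\in\wedge^{k+1}V^*$, with $L\cap V=\ker(\omega)$ and $\pp(L)=\Ima(\omega^\flat)$. Writing $m=\dim\Ima(\omega^\flat)=\dim(V/\ker\omega)$, the equality $\dim\pp(L)=\dim\Ann(L\cap V)$ reads $m=\binom{m}{k}$. For $k\geq 2$ a short monotonicity argument on binomial coefficients shows that the only solutions are $m=0$ and $m=k+1$. The case $m=0$ is $\omega=0$, while $m=k+1$ means that $\ker(\omega)$ has codimension $k+1$, so the induced form on the $(k+1)$-dimensional quotient $V/\ker(\omega)$ is of top degree, hence a nonzero multiple of a volume form; pulling back, these are exactly the nonzero decomposable $(k+1)$-forms. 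Thus in this case \textnormal{(C2s)} holds if and only if $\omega$ is decomposable.

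For $\dim(E)\leq n-k$ with $E\neq V$, set $a=n-\dim(E)\geq k$ and $d=\dim\Ima(\varepsilon)$. Then $\dim\pp(L)=\binom{a}{k}+d$ and, since $\dim\ker(\varepsilon)=\dim(E)-d$, we get $\dim\Ann(L\cap V)=\binom{a+d}{k}$. Thus \textnormal{(C2s)} becomes $\binom{a+d}{k}-\binom{a}{k}=d$. By Pascal's rule the left-hand side equals $\sum_{i=a}^{a+d-1}\binom{i}{k-1}$, and each summand satisfies $\binom{i}{k-1}\geq\binom{k}{k-1}=k\geq 2$ because $i\geq a\geq k$; hence the left-hand side is at least $dk$, which is strictly larger than $d$ unless $d=0$. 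Therefore $\varepsilon=0$ and $L=L(E,\Ann(E),0)=E+\Ann(E)$.

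Conversely, the dimension computations read backwards show that both families are standard \textnormal{(C2s)} subspaces: $E+\Ann(E)$ with $\dim(E)\leq n-k$ is lagrangian (Example~\ref{ex:E}) and satisfies $\Ann(L\cap V)=\Ann(E)=\pp(L)$ directly, while for a decomposable $\omega$ the value $m\in\{0,k+1\}$ yields $\dim\Ima(\omega^\flat)=\dim\Ann(\ker\omega)$, which together with the automatic inclusion forces equality and hence \textnormal{(C2s)}. I expect the main obstacle to be the combinatorial core—establishing that $m=\binom{m}{k}$ forces $m\in\{0,k+1\}$ (and translating $m=k+1$ into decomposability), and that the Pascal sum strictly exceeds $d$ for $d\geq 1$—since the remaining steps follow formally from the lagrangian reduction, Lemma~\ref{lem:Lagrangian sbspace}, and the universal inclusion $\pp(L)\subseteq\Ann(L\cap V)$.
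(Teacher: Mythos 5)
Your proposal is correct and follows essentially the same route as the paper: reduce to the lagrangian case via the hierarchy (C2s)\,$\Rightarrow$\,(C1), so that $A_L=\Ann(E)$, and then solve the same binomial dimension constraint $\binom{a+d}{k}-\binom{a}{k}=d$ (the paper's $a-a'=\binom{a}{k}-\binom{a'}{k}$) by the identical Pascal-identity estimate, with the $E=V$ case yielding $m\in\{0,k+1\}$ and decomposability. Your phrasing of (C2s) as a dimension equality via the universal inclusion $\pp(L)\subseteq\Ann(L\cap V)$, and the quotient-space argument for decomposability in place of the paper's adapted basis, are only minor streamlinings of the same argument.
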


  \begin{proof}
    The map $\varepsilon$ induces an isomorphism $\frac{E}{L\cap V}\cong \frac{\pp(L)}{A_L}$, as $\ker(\varepsilon)=L\cap V$. By looking at the dimensions, setting $\dim(E)=n-a'$, $\dim (L\cap V)=n-a$ and recalling $\pp(L)=\Ann(L\cap V)$, $A_L=\Ann(E)$ (by Proposition \ref{prop:hierarchy with DC}), we have the constraint
    $$a-a'={a \choose k} - {a' \choose k}.$$

    When $a'=0$, i.e., $E=V$, we have that $a={a \choose k}$ is satisfied only for $a=0,k+1$. The case $a=0$ corresponds to the subspace $V$, whereas $a=k+1$ corresponds to $L(V,0,\varepsilon)$ with $\varepsilon$ a $(k+1)$-form with $(k+1)$-dimensional image and $(n-(k+1))$-dimensional kernel. This means that $\varepsilon$ is decomposable. Indeed, take a basis $\{b_1,\ldots, b_{k+1}, c_{k+2},\ldots, c_n\}\subset V$ with $c_j\in \ker(\varepsilon)$, and dual basis $\{b^1,\ldots, b^{k+1}, c^{k+2},\ldots, c^n\}\subset V$. By $\wedge^{k+1} V^* \cong (\wedge^{k+1} V)^*$, we have that $\varepsilon$ is nonzero only in the subspace generated by $b_1\wedge \ldots \wedge b_{k+1}$, so we must have that $\varepsilon$ is a multiple of $b^1\wedge \ldots b^{k+1}$ and hence decomposable.

    On the other hand, for $a'\geq k$ we also have $a\geq k$. Use repeatedly the binomial identity ${b \choose k }={b-1 \choose k-1} + {b-1 \choose k}$ in the constraint to obtain
    $$ a-a'= { a-1\choose k-1} +\ldots+ {a' \choose k-1}.$$
    The RHS has $a-a'$ positive terms. As $k\geq 2$, this constraint is
    not satisfied unless $a=a'$, i.e., $E=L\cap V$ and $\pp(L)=A_L=\Ann(E)$, which
    means that $L=E+\Ann(E)$.
  \end{proof}

For non-standard isotropic subspaces $L(E,A_L,\varepsilon)$ (for which $n-k< \dim(E)<n$) we have the following.

  \begin{proposition}\label{prop:hierarchy without DC}
    A non-standard isotropic subspace $L(E,A_L,\varepsilon)$ always satisfies \textnormal{(C3s)} and never \textnormal{(C3w)} or \textnormal{(C1)}. The properties \textnormal{(C2w)} and \textnormal{(C2s)} are independent from each other.
  \end{proposition}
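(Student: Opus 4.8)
The plan is to dispose of the three ``always/never'' assertions first. For a non-standard $L(E,A_L,\ve)$ the defining condition $n-k<\dim(E)<n$ forces $\Ann(E)=\{0\}$, since a subspace of dimension exceeding $n-k$ admits no nonzero annihilating $k$-form; isotropy then gives $A_L\subseteq\Ann(E)$, so $A_L=\{0\}$ and $L=L(E,0,\ve)$ is a graph with $E\neq V$. Granting this, (C3s) reads $\Ann(E)=A_L$, i.e. $\{0\}=\{0\}$, so it always holds; (C3w) reads $E=A_L^\circ=\{0\}^\circ=V$, which fails because $\dim(E)<n$; and (C1) fails because Lemma~\ref{lem:Lagrangian sbspace} characterizes lagrangian subspaces as \emph{standard} isotropic ones, whereas $L$ is non-standard by hypothesis.

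For the independence of (C2w) and (C2s) I would first rewrite both conditions for the graph $L=L(E,0,\ve)$, using $L\cap V=\ker(\ve)$ and $\pp(L)=\Ima(\ve)$: condition (C2w) becomes $(\Ima\ve)^\circ=\ker(\ve)$ and (C2s) becomes $\Ann(\ker\ve)=\Ima(\ve)$, where in each case one inclusion ($\ker\ve\subseteq(\Ima\ve)^\circ$, resp. $\Ima\ve\subseteq\Ann(\ker\ve)$) is automatic from $E$-skewness, so only the reverse inclusion is at issue. A guiding observation is that under (C2s) one has $(\Ima\ve)^\circ=\Ann(\ker\ve)^\circ$, which by \eqref{eq:doubleAnn-standard} equals $\ker(\ve)$ exactly when $\ker(\ve)$ is a standard subspace; thus (C2s) fails to imply (C2w) precisely when $\ker(\ve)$ is non-standard, and this points directly to the examples.

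The two examples are then: for (C2s) without (C2w), take simply $L=E$ with $n-k<\dim(E)<n$ (e.g. $\dim E=n-1$, $k\geq 2$), so that $\ve=0$, giving $\Ann(\ker\ve)=\Ann(E)=\{0\}=\Ima\ve$ (hence (C2s)) while $(\Ima\ve)^\circ=V\neq E=\ker\ve$ (so (C2w) fails); for (C2w) without (C2s), invoke the higher Poisson subspace $L=\{X+(i_X\w_1)\wedge\w_2\mid X\in V_1\}$ of Example~\ref{ex:w12} in the regime $n_2<k_1+k_2+1$ where it is non-standard, which is weakly lagrangian---hence (C2w)---yet has $L\cap V=\{0\}$, so that $\Ann(L\cap V)=\wedge^kV^*$ strictly contains the proper subspace $\pp(L)=\Ima(\ve)$ and (C2s) fails.

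The main obstacle is locating the second example and checking its bookkeeping---that $n_2<k_1+k_2+1$ really yields a non-standard $L$ whose image $\Ima(\ve)$ is a proper subspace of $\wedge^kV^*$; the remaining verifications reduce to the definitions together with Lemma~\ref{lem:Lagrangian sbspace} and \eqref{eq:doubleAnn-standard}.
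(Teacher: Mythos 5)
Your proposal is correct and takes essentially the same route as the paper: the always/never part (non-standard forces $A_L=\{0\}$, making (C3s) vacuous, (C3w) equivalent to the false statement $E=A_L^\circ=V$, and (C1) impossible by Lemma~\ref{lem:Lagrangian sbspace}) is the paper's argument verbatim, and independence is likewise settled by two counterexamples. Your examples are family versions of the paper's concrete ones in $\RR^3+\wedge^2(\RR^3)^*$: taking $L=E$ with $n-k<\dim(E)<n$ generalizes $\mathrm{span}\{e_1,e_2\}$, and the non-standard instance of Example~\ref{ex:w12} with $n_1=2$, $n_2=1$, $k_1=1$, $k_2=0$ recovers the paper's second example, and in fact fixes a sign there (as printed, $\mathrm{span}\{e_1+e^2\wedge e^3,\ e_2+e^1\wedge e^3\}$ is not isotropic; the graph of $X\mapsto (i_X\omega_1)\wedge\omega_2$ is $\mathrm{span}\{e_1+e^2\wedge e^3,\ e_2-e^1\wedge e^3\}$, which is your version).
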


  \begin{proof}
    From Definition \ref{def:standard}, we have $L=L(E,0,\varepsilon)=gr(\varepsilon)$, so $A_L=L\cap \wedge^kV^*=\{0\}$. Thus, $\Ann(E)=\{0\}=A_L$ and (C3s) is satisfied, whereas $A_L^\circ=V\neq 0$ and (C3w) is not satisfied. Lemma \ref{lem:Lagrangian sbspace} shows that (C1) is not satisfied either. The proof is completed by the examples below.
  \end{proof}

  \begin{example}
    Examples of non-standard isotropic subspaces, using bases as above.
    \begin{itemize}
    \item In $\RR^3 + \wedge^2 (\RR^3)^*$, the subspace $L=\textup{span}\{ e_1,e_2\}$ satisfies (C2s) but not (C2w).

    \item In $\RR^3 + \wedge^2 (\RR^3)^*$, the subspace $L=\textup{span}\{  e_1+e^2\wedge e^3,e_2+e^1\wedge e^3 \}$ satisfies (C2w) but  not (C2s).%no-constraint

    \end{itemize}
  \end{example}

\end{appendices}{}

%\printbibliography
%\bibliography{higherdiracimrn.bib}

\bibliographystyle{plain}

\end{document}